\newtheorem{thm}{Theorem}[section]
\newtheorem{cor}[thm]{Corollary}
\newtheorem{prop}[thm]{Proposition}
\theoremstyle{definition}
\newtheorem{defn}[thm]{Definition}
\theoremstyle{definition}
\newtheorem{exa}[thm]{Example}
\theoremstyle{definition}
\newtheorem{rem}[thm]{Remark}
\newtheorem{lem}[thm]{Lemma}
\numberwithin{equation}{section}
\newcommand{\bndkl}{\mathscr{B}un_X^{k,l}(n,d)}
\newcommand{\bnd}{\mathscr{B}un_X(n,d)}
\newcommand{\bs}{\mathscr{B}un^{s}}
\newcommand{\bp}{\mathscr{B}un^{\mathfrak{P}}_X(n,d)}
\newcommand{\df}{deg (F)}
\newcommand{\Ql}{{{\overline{\mathbb Q}}_l}}
\def\Q{{\mathbb Q}}
\def\C{{\mathbb C}}
\def\F{{\mathbb F}}
\def\Spec{\mathrm{Spec}}
\begin{document}

\title[Geometry of moduli stacks of $(k, l)$-stable bundles]{Geometry of moduli stacks of $(k, l)$-stable vector bundles over algebraic curves}

\author[O. Mata-Guti\'errez ]
{O. Mata-Guti\'errez }
\address{Departamento de Matemáticas, CUCEI,  Universidad de Guadalajara\\
Av. Revoluci\'on 1500,
C. P. 44430 \\
Guadalajara, Jalisco, M\'exico}
\email{osbaldo.mata@academico.udg.mx, osbaldo@cimat.mx}

\author[F. Neumann]
{Frank Neumann}
\address{Department of Mathematics\\
University of Leicester\\
University Road, Leicester LE1 7RH, England, UK}
\email{fn8@le.ac.uk}

\subjclass[2000]{Primary 14H60, 14D23, secondary 14D20}

\keywords{algebraic stacks, moduli of vector bundles, $(k, l)$-stability}

\begin{abstract}
{We study the geometry of the moduli stack of vector bundles of fixed rank and degree over an algebraic curve
by introducing a filtration made of open substacks build from $(k, l)$-stable vector bundles. The concept of $(k, l)$-stability was introduced
by Narasimhan and Ramanan to study the geometry of the coarse moduli space of stable bundles. We will exhibit the stacky picture
and analyse the geometric and cohomological properties of the moduli stacks of $(k, l)$-stable vector bundles. For particular pairs $(k, l) $
of integers we also show that these moduli stacks admit coarse moduli spaces and we discuss their interplay.}
\end{abstract}
\maketitle

\vspace*{-0.5cm}

\section*{Introduction}

Let $X$ be a geometrically irreducible smooth projective algebraic curve of genus $g\geq 2$ over either the field $\C$ of complex numbers or the algebraic closure $\overline{\F}_q$ of the field $\F_q$ with $q=p^s$ elements for a prime $p$. Using Geometric Invariant theory, Mumford \cite{M} constructed a coarse moduli space $M_{X}^{s}(n,d)$ for the moduli problem of stable vector bundles of rank $n$ and degree $d$ over X and showed that this moduli space is in fact a non-singular quasi-projective scheme of dimension $n^{2}(g-1)+1$. If in addition the rank $n$ and the degree $d$ are actually coprime, this moduli space is in fact a projective scheme and a fine moduli space.
More generally, considering the notion of S-equivalence classes of vector bundles, Seshadri \cite{Se2} constructed a coarse moduli space $M_{X}^{ss}(n,d)$ for semistable vector bundles of rank $n$ and degree $d$, which gives a natural compactification of the moduli space $M_{X}^{s}(n,d)$ of stable bundles over $X$.

Later, Narasimhan and Ramanan \cite{NR-def, NR-Geo} introduced a more general concept of $(k,l)$-stability for vector bundles over an algebraic curve $X$ defined for any pair $(k, l)$ of integers, which refines the classical notion of stability. A vector bundle $E$ is hereby $(k, l)$-stable if for any proper subbundle $F$ of $E$ we have for the generalised slopes $\mu_k(F)<\mu_{k-l}(E)$, where for a given pair $(k, l)$ the generalised slope is defined as $\mu_{k-l}(E)=(deg(E)+k-l)/rk(E)$. Narasimhan and Ramanan \cite{NR-Geo} derived conditions for some special values of integers $k$ and $l$ for which $(k, l)$-stable bundles over $X$ exist and proved some fundamental properties of $(k, l)$-stability, among them openness.
In particular, they used $(k, l)$-stability for the special pairs $(0, 1)$, $(1, 0)$ and $(1, 1)$ to define an open set inside the moduli space $M_X^{s}(n, L)$ of stable bundles over $X$ with fixed determinant $L$ that allows for the construction of a Hecke correspondence and an associated space of Hecke cycles inside a certain Hilbert scheme associated to $M_X^{s}(n, L)$, which under certain conditions gives a non-singular model for $M_X^{s}(n, L)$.
This Hecke correspondence has also been used recently in many other ways to study the geometry of the moduli space $M_{X}^{s}(n,d)$ of stable bundles over $X$ (see \cite{Bra-Mat, NH}).

In this article we embark to study the general moduli problem for $(k,l)$-stable vector bundles of rank $n$ and degree $d$ over an algebraic curve $X$ for {\it any} pair $(k, l)$ of integers. In the first section we will derive some general theorems (Theorem \ref{prop-nonempti} and Proposition \ref{thm-necsuf}) establishing conditions for the existence of $(k, l)$-stable vector bundles over $X$ for general pairs $(k, l)$ of integers and hereby extending the particular existence results of Narasimhan and Ramanan in \cite{NR-Geo}.  In section two we address the general moduli problem for $(k, l)$-stable vector bundles and analyse under which conditions with respect to the choice of integers $k, l, n, d$ the associated moduli functor is representable or corepresentable. It turns out that if the pair $(k, l)$ of integers meets the conditions that $0\leq k(n-1)+l < (n-1)(g-1)$ and $0\leq k+l(n-1)< (n-1)(g-1)$ then the coarse moduli space $M_X^{k,l}(n,d)$ for $(k, l)$-stable vector bundles over $X$ exists as an open subscheme of the moduli space $M_X^{s}(n,d)$ of stable vector bundles. The third section exhibits a general discussion of the set of isomorphism classes of $(k, l)$-stable vector bundles over $X$ for any pair
$(k, l)$ of integers, where among other things filtrations between the different sets of isomorphism classes are derived and how they relate to the coarse moduli spaces constructed before.  This allows for further characterisations of $(k, l)$-stable vector bundles.
In the fourth section we introduce the moduli stack $\bndkl$ of $(k, l)$-stable vector bundles of rank $n$ and degree $d$ over the algebraic curve $X$ for any pair $(k, l)$ of integers and study its basic geometric properties. It turns out that it is an Artin stack, which is locally of finite type and has an open embedding in the moduli stack $\bnd$ of all vector bundles of rank $n$ and degree $d$ over $X$ (Theorem \ref{thm-stack}). We also establish particular filtrations of the moduli stack $\bnd$ by means of open substacks of $(k, l)$-stable bundles:
$$
\cdots\subset \mathscr{B}un_X^{k-3,l}(n,d)\subset \mathscr{B}un_X^{k-2,l}(n,d)\subset \mathscr{B}un_X^{k-1,l}(n,d)\subset \mathscr{B}un_X^{k,l}(n,d)\subset\cdots
$$
$$
\cdots\subset \mathscr{B}un_X^{k,l-3}(n,d)\subset \mathscr{B}un_X^{k,l-2}(n,d)\subset \mathscr{B}un_X^{k,l-1}(n,d)\subset \mathscr{B}un_X^{k,l}(n,d)\subset\cdots
$$
In section five we then carefully analyse the relations between the moduli stacks and the coarse moduli spaces of $(k, l)$-stable vector bundles with respect to the conditions under which these coarse moduli spaces do exist. Finally, in the last section we derive some cohomological properties of the moduli stacks $\bnd$ and in particular discuss the rank $2$ case. We end by discussing a general Hecke correspondence involving the moduli stacks $\bnd$ by using appropriate Grassmannian bundles of the universal bundles over the moduli stacks involved. In this way we extend the approach of Narasimhan and Ramanan in \cite{NR-Geo} to the general case.

{\it Notation and conventions.} All schemes will be considered over the base $\Spec(\F)$, where $\F$ is either the field $\C$ of complex numbers or the algebraic closure $\F=\overline{\F}_q$ of the finite field $\F_q$ of characteristic $p$ with $q=p^s$ elements for a prime number $p$. The category of schemes $Sch/\Spec(\F)$ over $\Spec(\F)$ will be endowed with the \'etale topology whenever we need to emphasise a site.

\section{Vector bundles over algebraic curves, Segre invariants and $(k,l)$-stability.}

Let $X$ be an irreducible smooth projective algebraic curve of genus $g\geq 2$ over $\Spec(\F)$, where $\F$ is either the field of complex numbers $\C$ or the algebraic closure $\F=\overline{\F}_q$ of the field $\F_q$.

Narasimhan and Ramanan in \cite{NR-Geo} introduced the notion of $(k,l)$-stability and $(k, l)$-semistability for
vector bundles over $X$ and showed that $(k,l)$-stability is an open property for vector bundles over $X$ (see \cite[Proposition 5.3]{NR-Geo}).
Following Narasimhan and Ramanan we define (see \cite[Definition 5.1]{NR-Geo}):

\begin{defn}
Let $(k,l)$ be a pair of integers and $E$ a vector bundle over $X$. We define the {\it generalised slope} as the rational number
$$\mu_{k-l}(E)=\frac{deg(E)+k-l}{rk(E)},$$
and say that the vector bundle $E$ over $X$
is $(k,l)${\it -stable} (resp.{\it $(k, l)$-semistable}) if for any subbundle $F$ of $E$, we have
\begin{equation}\label{equivalenciaskl}
\mu_{k}(F) <\mu_{k-l}(E) \,\,(\mathrm{resp.}\,\,\mu_{k}(F)\leq \mu_{k-l}(E)).
\end{equation}
\end{defn}
Criteria for the existence of $(k, l)$-stable vector bundles for the pairs $(0,1)$, $(1,0)$ and $(1,1)$ were given by Narasimhan and
Ramanan in \cite[Proposition 5.4]{NR-Geo}. In Theorem \ref{prop-nonempti} below, we will extend this result for any pair $(k,l)$ of integers.

Obviously, $(0, 0)$-stability (resp.$(0, 0)$-semistability) just gives the classical notion of stability (resp.~semistability)
for vector bundles over algebraic curves. It is also an easy consequence from the definition,
that if $E$ is a $(k, l)$-stable vector bundle and $L$ a line bundle, then $E\otimes L$ is $(k,l)$-stable
and the dual vector bundle $E^*$ is $(l, k)$-stable.
A vector bundle of degree $0$ is stable if and only if it is $(0, 1)$-stable and a vector bundle
of degree $1$ is stable if and only if it is $(0, 1)$-semistable (see \cite[Remark 5.2]{NR-Geo}.

We also have the following fundamental properties for $(k, l)$-stability of vector bundles:

\begin{prop}[Narasimhan-Ramanan]
Let $(k,l)$ be a pair of integers. Then we have the following:
\begin{itemize}
\item[(1)] $(k, l)$-stability is an open property.
\item[(2)] If $E$ is $(k, l)$-stable, then $E$ is also $(k, l-1)$-stable and $(k-1, l)$-stable.
\item[(3)] Given an exact sequence of locally free sheaves
$$ 0\to E'\to E\to \mathcal{O}^{r}_{X}\to 0,$$
it follows that if $E$ is $(k, l)$-stable, then $E'$ is $(k, l-r)$-stable.
\end{itemize}
\end{prop}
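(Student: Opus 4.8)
The three assertions I treat separately; for (1) and (3) the natural route is essentially the one in Narasimhan--Ramanan \cite{NR-Geo}, and I sketch the shape of each argument. For (1), the plan is the standard semicontinuity argument. One first rephrases the condition: a bundle $E$ of rank $n$ and degree $d$ is $(k,l)$-stable if and only if every quotient bundle $G$ with $0<\mathrm{rk}\,G<n$ satisfies $\deg G > l + \mathrm{rk}\,G\cdot\frac{d+k-l}{n}$ (pass from a subbundle $F$ to $G=E/F$, replacing $F$ by its saturation). Given a flat family $\mathcal{E}$ of rank-$n$ degree-$d$ bundles over $X\times S$, the fibre $\mathcal{E}_s$ then fails to be $(k,l)$-stable exactly when it admits a quotient bundle of some rank $t\in\{1,\dots,n-1\}$ with degree $\le l+t\frac{d+k-l}{n}$; such quotients have degree bounded above by this quantity and bounded below by $t\,\mu_{\min}(\mathcal{E}_s)$, uniformly over the (bounded) family, so they are parametrised by a finite union of relative Quot schemes, proper over $S$. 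The $(k,l)$-unstable locus is the image of this proper $S$-scheme, hence closed, so $(k,l)$-stability is open; this is \cite[Proposition 5.3]{NR-Geo}.

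For (2), the plan is a direct slope estimate for a subbundle $F\subseteq E$. Since $\mu_{k-l}(E)<\mu_{k-(l-1)}(E)$ (the numerator increases by $1$), the hypothesis $\mu_k(F)<\mu_{k-l}(E)$ at once gives $\mu_k(F)<\mu_{k-(l-1)}(E)$, so $E$ is $(k,l-1)$-stable. Moreover $\mu_{k-1}(F)=\mu_k(F)-\tfrac{1}{\mathrm{rk}\,F}\le\mu_k(F)-\tfrac1n$ (as $\mathrm{rk}\,F\le n$) while $\mu_{(k-1)-l}(E)=\mu_{k-l}(E)-\tfrac1n$, so subtracting $\tfrac1n$ throughout the hypothesis yields $\mu_{k-1}(F)<\mu_{(k-1)-l}(E)$, i.e.\ $E$ is $(k-1,l)$-stable.

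For (3), the plan has three steps. (i) Reduce to $r=1$: composing $E\twoheadrightarrow\mathcal{O}_X^r$ with a coordinate projection $\mathcal{O}_X^r\twoheadrightarrow\mathcal{O}_X$ produces $E_1:=\ker(E\to\mathcal{O}_X)$ fitting into $0\to E'\to E_1\to\mathcal{O}_X^{r-1}\to 0$ and $0\to E_1\to E\to\mathcal{O}_X\to 0$, so the general statement follows from the case $r=1$ by induction on $r$. (ii) Any subbundle $F$ of $E'$ is automatically a proper subbundle of $E$: since $E'/F$ is locally free, the saturation of $F$ in $E$ meets $E'$ in $F$, and its image in $E/E'\cong\mathcal{O}_X$ is then a torsion subsheaf of a torsion-free sheaf, hence zero, so the saturation equals $F$. (iii) $(k,l)$-stability of $E$ now gives $\mu_k(F)<\mu_{k-l}(E)$, and since $\deg E'=\deg E$ and $\mathrm{rk}\,E'=\mathrm{rk}\,E-1$, comparing $\mu_{k-l}(E)$ with $\mu_{k-(l-1)}(E')$ yields $\mu_k(F)<\mu_{k-(l-1)}(E')$, i.e.\ $E'$ is $(k,l-1)$-stable. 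I expect step (iii) to be the main obstacle: the slope comparison there carries the numerical content of the statement and requires exploiting the constraints on the pair $(k,l)$ — in particular the restriction forced by $\mathcal{O}_X^r$ being a quotient of the $(k,l)$-stable bundle $E$ — and these are exactly the estimates of \cite[\S5]{NR-Geo}.
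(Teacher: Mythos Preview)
For (1) and (2) you match the paper: it simply cites \cite[Proposition~5.3]{NR-Geo} for (1) and calls (2) a direct consequence of the definition, and your sketches fill in exactly those details correctly.

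For (3) your route diverges from the paper's, and the divergence explains the obstacle you flag in step~(iii). The paper's argument is a single chain: for $F\subset E'$ proper with saturation $\bar F$ in $E$,
\[
\mu_k(F)\;\le\;\mu_k(\bar F)\;<\;\mu_{k-l}(E)\;=\;\mu_{k-(l-r)}(E').
\]
That last \emph{equality} requires $\deg E'=\deg E-r$ and $\mathrm{rk}\,E'=\mathrm{rk}\,E$, so the quotient in the exact sequence is being read as a length-$r$ torsion sheaf (the elementary/Hecke modification of \cite[Lemma~5.5]{NR-Geo}), not as the trivial rank-$r$ bundle $\mathcal{O}_X^{r}$. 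With that reading your ``main obstacle'' vanishes --- the slope comparison is an identity --- while saturation becomes essential, since $E/F$ then acquires torsion. Your steps (i)--(ii), by contrast, are tailored to the locally-free-quotient reading: the reduction to $r=1$ and the observation that $F$ is already saturated in $E$ both rely on $E/E'$ being locally free. Under that reading the inequality you need in (iii), namely $\mu_{k-l}(E)\le\mu_{k-(l-1)}(E')$, unwinds to $d+k-l\ge -n$, which is \emph{not} forced by $(k,l)$-stability of $E$: applying the hypothesis to the subbundle $E'\subset E$ yields only an \emph{upper} bound on $d+k-l$, so the extra constraint you hoped to exploit points the wrong way. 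You were right to isolate (iii) as the crux; the resolution the paper uses is the torsion interpretation of the quotient, after which (3) is the one-line display above and no induction on $r$ is needed.
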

\begin{proof} The first property (1) is basically \cite[Proposition 5.3]{NR-Geo}.
Property (2) is a direct consequence of the definition. Finally, the last property (3) is a consequence of
\cite[Lemma 5.5]{NR-Geo} and the argument goes as follows: Let $F$ be a proper subbundle of $E'$
and $\bar{F}$ the saturation of $F$ in $E$. Then $\bar{F}$ is a proper subbundle of $E$ and
therefore $\mu_{k}(\bar{F})<\mu_{k-l}(E)$. Moreover, $\mu_{k}(F)\leq\mu_{k}(\bar{F})<\mu_{k-l}(E)=\mu_{k-l+r}(E').$ This is our assertion.
\end{proof}

Now we will recollect some general properties of Segre invariants (see \cite{LN-Max, L-some, BL-stra,Teix-ona}), which we will need to use later.

\begin{defn}
Let $E$ be vector bundle over $X$ of rank $n$ and degree $d.$
Let $m\in \mathbb{Z}$ such that $1\leq m\leq n-1$. The
 \textit{m-Segre invariant} for $E$, is denoted by $s_{m}(E)$ and defined as the integer
$
s_{m}(E)= md-n \cdot \df_{\max},
$
where $F_{\max}\subset E$ is a proper subbundle of rank $m$ and maximal degree.
\end{defn}

Hirschowitz proved in \cite{Hirs-pro} the following fundamental inequality
\begin{equation}\label{cota-H-sm}
s_{m}(E)\leq m(n-m)(g-1)+(n-1).\\
\end{equation}

\noindent Specifically, he proved that there is a unique integer
$\delta_{m}$ with  $0\leq \delta_{m} \leq n-1$
and $m(n-m)(g-1)+\delta_{m}\equiv md\mod n,$ such that
\begin{equation}\label{igual-H-sm}
s_{m}(E)\leq m(n-m)(g-1)+\delta_{m}.
\end{equation}
\noindent Equality holds if $E$ is general.

Let $M^s_X(n,d)$ be the set of all stable vector bundles of rank $n$ and degree $d$ over $X$.
Furthermore, the set of all stable vector bundles of rank $n$ and degree $d$ with
$m$-Segre invariant equal to $s$ will be denoted by $M^s_X(n,d,m,s),$ that is
$
M^s_X(n,d,m,s):=\{E\in M^s_X(n,d)|\ s_{m}(E)=s\}.
$
If $s$ is such that $0<s\leq m(n-m)(g-1)$, $s\equiv md\mod n$ and
$g\geq 2,$ then $M^s_X(n,d,m,s)$  is non-empty and
irreducible of dimension $n^{2}(g-1)+1+s-m(n-m)(g-1)$
(see \cite{Teix-ona, BL-stra}).

\begin{rem}\label{kl-vs-st}
From the definition of the Segre invariant and the definition of $(k,l)$-stability
we can see that $E$ is $(k,l)$-stable if and only if $s_{m}(E)>k(n-m)+ml$
for all $m$ with $1\leq m\leq n-1$.
\end{rem}

\begin{rem}\label{A}
  Suppose that $E$ is a stable vector bundle of rank $n$ and degree $d$.
  Also suppose that $E$ is not $(k,l)$-stable. Then there exists
  a proper subbundle $F\subset E$ such that
  $$
  \mu_{k}(F)\geq \mu_{k-l}(E)
  $$
  and an exact sequence
  \begin{eqnarray}\label{exacseq}
  0\rightarrow F \rightarrow E \rightarrow E/F \rightarrow 0.
  \end{eqnarray}
  Let now $rk(F)=m$ and $deg(F)=\delta$. Then we have
  $
  (\delta+k)/m\geq (d+k-l)/n
  $
  and
  $
  d/n>\delta/m.
  $\\
\end{rem}

  Now, applying \cite[Proposition 2.6]{NR-def} to $F$ and $(m,\delta)$
  we get a family $\mathcal{F}$ of vector bundles
  on $X$ of rank $m$ and degree $\delta$ parametrised by a scheme $R$ with the following
  properties:
  \begin{enumerate}
    \item $R$ is irreducible, 
    \item the family $\mathcal{F}$ contains $F$ and all stable vector bundles
    of rank $m$ and degree $\delta$ on $X$.
  \end{enumerate}

Furthermore, let $\mathcal{G}$ be the family of vector bundles on $X$ of rank $n-m$ and degree $d-\delta$ parameterised by a scheme $S$ obtained by applying \cite[Proposition 2.6]{NR-def} to $E/F$ and $(n-m,d-\delta)$.

  Now let $H\subset R\times S$ be the open subscheme given such that
  $(r,s)\in H$ if $H^{0}(X, Hom( \mathcal{G}_{s},\mathcal{F}_{r}))=0.$
  Then
  $\mathcal{R}^{1}_{p_{R\times S}}(X\times R\times S, Hom(p_{13}^{*}\mathcal{G}, p_{12}^{*}\mathcal{F}))$ is locally free on $H$.
  
  Note that $H$ is non-empty, because $Hom(E/F, E)=0$. Indeed, if such an homomorphism $f\in Hom(E/F, F)$ would exist, it would give, by composition 
a non-zero homomorphism $E\twoheadrightarrow E/F \rightarrow F \hookrightarrow E$ , which is not an isomorphism. But this is impossible, since $E$ is stable.

  We set $P:=\mathbb{P}(\mathcal{R}^{1}_{p_{R\times S}}(X\times R\times S, Hom(p_{13}^{*}\mathcal{G}, p_{12}^{*}\mathcal{F})))|_{H}$ and let $\pi:P\rightarrow H$ be the projection. Then by \cite{NR-def} (see also \cite[Lemma 2.4]{Ra})
  we have the exact sequence
  $$
  0\rightarrow \pi^{*}p_{X\times R}^{*}\mathcal{F}\otimes p_{P}^{*}\tau_{P}\rightarrow \mathcal{E}\rightarrow \pi^{*}p_{X\times S}^{*}\mathcal{G} \rightarrow 0
  $$
 on $X\times P$, where $\mathcal{E}$ is a family of vector bundles parameterised by $P$ and $\tau_{P}$ the tautological hyperplane bundle.
 Now let $P^{st}$ be the open subscheme given by the stability condition, i.e.,  $q\in P^{st}$ if and only if $\mathcal{E}|_{X\times\{q\}}$ is stable.
 Moreover, $P^{st}$ is non-empty since we have the extension
 (\ref{exacseq}) in Remark \ref{A} defining a point on $P^{st}$.  Therefore, we obtain a map $\theta_{\mathcal{E}}:P^{st}\rightarrow M^s_X(n,d)$, the classifying map.
 Note that $\theta_{\mathcal{E}}(P^{st})$ is the set of  stable vector bundles of rank $n$ and degree $d$ with a subundle $F$ of rank $m$ and degree $\delta$ as considered above.

\begin{cor}\label{B}
 Let $\mathcal{F},\mathcal{G}, R$ and $S$ be as above.
 Let $H'\subset R\times S$ be the open subscheme defined as
 $$
 H'=\{(r,s)\in R\times S \  |\  \mathcal{F}_{s},\mathcal{G}_{r}\text{ are stable}\}.
 $$
 Then $\theta_{\mathcal{E}}(P^{st}|_{H'})$ is dense in $\theta_{\mathcal{E}}(P^{st}).$
 \end{cor}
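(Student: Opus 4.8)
The plan is to show that the locus in $P^{st}$ lying over $H'$ is dense in $P^{st}$, and then to transport this density forward along the classifying map $\theta_{\mathcal{E}}$. Since $H' \subset H$ is an open subscheme of the irreducible scheme $R\times S$ (irreducible because $R$ and $S$ are each irreducible by the cited \cite[Proposition 2.6]{NR-def}), the first thing I would verify is that $H'$ is nonempty: the families $\mathcal{F}$ and $\mathcal{G}$ contain \emph{all} stable bundles of the relevant rank and degree, and stable bundles of any rank and degree on $X$ exist (for $g\geq 2$), so there are points $r\in R$, $s\in S$ with $\mathcal{F}_r$, $\mathcal{G}_s$ stable; moreover for such generic stable bundles $H^0(X,Hom(\mathcal{G}_s,\mathcal{F}_r))=0$ can be arranged (a stable bundle of smaller or equal slope admits no nonzero map to a stable bundle, after checking the slope inequality coming from Remark \ref{A}, namely $\mu(\mathcal{G}_s) = (d-\delta)/(n-m) > d/n > \delta/m = \mu(\mathcal{F}_r)$, so indeed $Hom(\mathcal{G}_s,\mathcal{F}_r)=0$). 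Hence $H' \cap H$ is a nonempty open subscheme of the irreducible scheme $R\times S$, so it is dense in $R\times S$ and in particular dense in $H$.

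Next I would descend this to $P$. The projection $\pi\colon P\to H$ is a projective-space bundle, hence open and with irreducible fibres, so the preimage $\pi^{-1}(H'\cap H)$ is a nonempty open dense subscheme of $P$; intersecting with the open subscheme $P^{st}\subset P$ (which is nonempty, as noted in the excerpt, because the extension \eqref{exacseq} of Remark \ref{A} gives a point of $P^{st}$), we get that $P^{st}|_{H'} = P^{st}\cap \pi^{-1}(H')$ is a nonempty open subscheme of the (irreducible, since it is open in the irreducible $P$) scheme $P^{st}$, hence dense in $P^{st}$. Here one should be a little careful that $P^{st}|_{H'}$ is actually nonempty: this is where I expect the main subtlety to lie, since a priori it is not obvious that one can choose the extension class \emph{and} the stable representatives $\mathcal{F}_r,\mathcal{G}_s$ simultaneously so that the resulting middle term $\mathcal{E}$ is stable. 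I would handle this by an openness/dimension argument: $P^{st}$ is open and dense in $P$, $\pi^{-1}(H'\cap H)$ is open and dense in $P$, so their intersection is open and dense in $P$, hence nonempty; this avoids having to exhibit an explicit stable extension over $H'$.

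Finally, density is preserved by $\theta_{\mathcal{E}}$: since $P^{st}|_{H'}$ is dense in $P^{st}$, its image $\theta_{\mathcal{E}}(P^{st}|_{H'})$ is dense in the image $\theta_{\mathcal{E}}(P^{st})$ — indeed, for any continuous map $f$ and any dense subset $D$ of a space $Y$, $f(D)$ is dense in $f(Y)$, because $\overline{f(D)} \supseteq f(\overline{D}) = f(Y)$ (taking closures inside $\overline{f(Y)}$). This gives the claim. The only real work is the nonemptiness of $P^{st}|_{H'}$, which the openness-plus-density argument above resolves; the rest is the standard fact that preimages of dense opens under projective bundle projections are dense and that images of dense sets are dense.
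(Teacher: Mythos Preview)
Your proof is correct and follows essentially the same strategy as the paper: show $H' \subset H$ via the slope inequality for stable bundles, use irreducibility of $R\times S$ to conclude $H'$ is dense, lift this to $P$ via the projective bundle $\pi$, and push density forward along $\theta_{\mathcal{E}}$. You are in fact more careful than the paper in explicitly justifying the nonemptiness of $P^{st}|_{H'}$ through the intersection-of-dense-opens argument inside the irreducible $P$; the paper asserts this step without spelling it out.
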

\begin{proof}

Because there are no non-zero homomorphisms between two stable bundles if the first bundle has higher
slope than the second it follows that $H'\subset H$. Therefore, we can restrict the projective bundle $P^{st}$ on $H$ to $H'$. 
Furthermore, $H'$ is non-empty by construction of $R$ and $S$. In addition, as $H'$ is non-empty and $S,R$ are irreducible, $H'$ is also irreducible and dense
in $R\times S$. Therefore, $P^{st}|_{H^{'}}$ is dense in $P^{st}$ and the Corollary follows (see also \cite[Proposition 6.7]{NR-def}).
\end{proof}

\begin{rem} Note that if $E$ is a stable vector bundles of $rk(E)=n$ and $deg(E)=d$
and if we suppose that $F$ is a subbundle of $E$ of $rk(F)=m$ and $deg(F)=\delta$,
then Corollary \ref{B} implies that the exact sequence
$$
0\rightarrow F\rightarrow E\rightarrow E/F\rightarrow 0
$$
determines a point $q\in \theta_{\mathcal{E}}(P^{st})$ and such a point is in the closure
$\theta_{\mathcal{E}}(P^{st}|_{H'})$.\\
\end{rem}

The following theorem gives conditions on the general existence of $(k, l)$-stable vector bundles and under which conditions $(k, l)$-stability implies stability (see also \cite{Bra-Mat}).

\begin{thm} \label{prop-nonempti}
  Let $X$ be a non-singular projective curve of genus $g\geq 2$ and let $k,l,n$ be integers. Then:
\begin{enumerate}
\item If \begin{equation}
k(n-1)+l < (n-1)(g-1) \label{cond1},
\end{equation}
and
\begin{equation}\label{cond2}
k+l(n-1)< (n-1)(g-1),
\end{equation}
\noindent then there exist $(k,l)$-stable vector bundles of rank $n$ and degree $d$ over $X$.
\item If
\begin{equation}\label{cond4}
k(n-1)+l\geq (n-1)g,
\end{equation}
or
\begin{equation}\label{cond5}
k+l(n-1)\geq (n-1)g,
\end{equation}
then there do not exist $(k,l)$-stable vector bundles of rank $n$ and degree $d$ over $X$.

  \end{enumerate}
\end{thm}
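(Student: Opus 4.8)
Throughout I assume $n\ge 2$ (when $n=1$ there are no proper subbundles to consider). By Remark \ref{kl-vs-st}, a stable bundle $E$ of rank $n$ and degree $d$ is $(k,l)$-stable precisely when $s_m(E)>k(n-m)+ml$ for every $m$ with $1\le m\le n-1$, and this reformulation is the backbone of the argument. Set $f(m):=k(n-m)+ml-m(n-m)(g-1)$, a quadratic polynomial in $m$ whose leading coefficient $g-1$ is positive, so $f$ is convex. Conditions \eqref{cond1} and \eqref{cond2} say exactly that $f(1)<0$ and $f(n-1)<0$, so by convexity $f(m)\le\max\{f(1),f(n-1)\}<0$ for every integer $m$ with $1\le m\le n-1$. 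This single elementary observation drives both parts.

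For part (2) I would argue by contradiction. If $E$ were a $(k,l)$-stable bundle of rank $n$ and degree $d$, then Remark \ref{kl-vs-st} at $m=1$ and $m=n-1$, combined with Hirschowitz's bound \eqref{cota-H-sm}, would give $k(n-1)+l<s_1(E)\le (n-1)(g-1)+(n-1)=(n-1)g$ and $k+(n-1)l<s_{n-1}(E)\le (n-1)(g-1)+(n-1)=(n-1)g$. Either \eqref{cond4} or \eqref{cond5} contradicts one of these, so no $(k,l)$-stable bundle of rank $n$ and degree $d$ exists; this is the short direction.

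For part (1), I would show that the locus $\Sigma\subset M^s_X(n,d)$ of stable bundles that are \emph{not} $(k,l)$-stable is a proper subset; since $M^s_X(n,d)$ is non-empty and irreducible, any $E$ outside $\Sigma$ then proves the assertion. By Remark \ref{A} a stable $E\in\Sigma$ carries a destabilising subbundle of some rank $m$ and degree $\delta$ with $d/n>\delta/m$ and $(\delta+k)/m\ge (d+k-l)/n$; these two inequalities pin $(m,\delta)$ down to a finite set, so $\Sigma$ is a \emph{finite} union of images $\theta_{\mathcal{E}}(P^{st})$ of the kind built just before Corollary \ref{B}. By that Corollary it suffices to bound the dense sublocus $\theta_{\mathcal{E}}(P^{st}|_{H'})$, consisting of bundles $E$ with a stable subbundle $F$ of rank $m$ and degree $\delta$ and stable quotient $Q=E/F$ of rank $n-m$ and degree $d-\delta$; such an $E$ is determined up to finitely many choices by the pair $(F,Q)$ and the class of its extension in $\mathbb{P}(\mathrm{Ext}^1(Q,F))$. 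Combining $\dim M^s_X(m,\delta)=m^2(g-1)+1$, $\dim M^s_X(n-m,d-\delta)=(n-m)^2(g-1)+1$, the vanishing $\hom(Q,F)=0$ (since $\mu(F)<\mu(Q)$), the Riemann--Roch value $\dim\mathrm{Ext}^1(Q,F)=m(n-m)(g-1)+(md-n\delta)$, and the destabilising bound $md-n\delta\le k(n-m)+ml$, a direct computation (using $m^2+(n-m)^2+m(n-m)=n^2-m(n-m)$) collapses everything to
$$
\dim\theta_{\mathcal{E}}(P^{st})\ \le\ n^2(g-1)+1+f(m)\ =\ \dim M^s_X(n,d)+f(m).
$$
As $f(m)<0$ for all integers $1\le m\le n-1$ by the convexity step, every piece of $\Sigma$ has dimension strictly below $\dim M^s_X(n,d)$, hence $\Sigma\subsetneq M^s_X(n,d)$ and part (1) follows.

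The only genuinely technical point is this dimension estimate: getting the Riemann--Roch computation of $\dim\mathrm{Ext}^1(Q,F)$ right, tracking the moduli dimensions of the two factors, and recognising that the resulting bound is exactly $\dim M^s_X(n,d)+f(m)$ for the convex function $f$; the rest is bookkeeping. There is also a slicker but less self-contained route to part (1): $s_m$ is lower semicontinuous on $M^s_X(n,d)$ and, by the equality clause of Hirschowitz, generically equals $m(n-m)(g-1)+\delta_m\ge m(n-m)(g-1)$, so $\{E:s_m(E)>k(n-m)+ml\}$ is a non-empty---hence dense---open subset of the irreducible $M^s_X(n,d)$; intersecting these over $m=1,\dots,n-1$ produces a $(k,l)$-stable bundle. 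The convexity observation $f(m)<0$ is what this route also relies on.
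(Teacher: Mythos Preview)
Your argument is correct and follows the same route as the paper: for part~(2) you use the Hirschowitz bound on $s_1$ and $s_{n-1}$ exactly as the paper does, and for part~(1) you bound the dimension of the non-$(k,l)$-stable locus inside $M^s_X(n,d)$ via the extension construction of Remark~\ref{A} and Corollary~\ref{B}, arriving at the same count $(n^2-m(n-m))(g-1)+1+(md-n\delta)$. Your convexity observation for $f(m)=k(n-m)+ml-m(n-m)(g-1)$ is a clean way to justify the step the paper states without detail (``by \eqref{cond1} and \eqref{cond2} we have $k(n-m)+ml<m(n-m)(g-1)$''), and your alternative route via lower semicontinuity of $s_m$ and Hirschowitz's generic equality is essentially the argument behind Proposition~\ref{thm-necsuf}.
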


\begin{proof}
(1) Assuming the inequalities for $k$ and $l$, we will prove that
there exist stable vector bundles that are $(k,l)$-stable.
Let $E$ be a stable vector bundle of rank $n$ and degree $d$, which
is not $(k,l)$-stable. Thus, by Remark \ref{A} there exists a proper subbundle $F\subset E$
of rank $m$ and degree $\delta$, such that
\begin{equation}\label{cond3}
\mu_{k-l}(E)\leq \mu_{k}(F).
\end{equation}

\noindent Considering the extension
$
0\to F\to E \to E/F \to 0,
$
we can assume by Corollary \ref{B} that $F$ and $E/F$ are stable (see also \cite[Proposition 2.6 ]{NR-def} and \cite[Proposition 5.4]{NR-Geo}).
Using \cite[Proposition 2.4]{NR-def}, as $\dim\,M^s_X(n,d)=n^{2}(g-1)+1$, it follows that the number of such extensions is bounded by
$ m^{2}(g-1)+1+(n-m)^{2}(g-1)+1+h^{1}\left((E/F)^{*}\otimes F\right)-1=
(n^{2}-mn+m^{2})(g-1)+1 + dm-n\delta.$
We will show now that this number is actually less than $n^{2}(g-1)+1$.
First, by (\ref{cond1}) and (\ref{cond2}) we have that
$k(n-m)+ml<m(n-m)(g-1)$ and by (\ref{cond3}) $dm- n\delta\leq k(n-m)+ml.$
Thus, $dm-n\delta<m(n-m)(g-1),$ which implies
$(n^{2}-mn+m^{2})(g-1)+1 + dm-n\delta<n^{2}(g-1)+1,$ i.e.,
the dimension of the locus of stable vector bundles
satisfying (\ref{cond3}) is less than $\dim M^s_X(n,d)$.
Allowing $m$ to vary with values $1\leq m\leq n-1$,
we conclude that the dimension of the locus of non-$(k,l)$-stable vector bundles
is also less than $\dim M^s_X(n,d)$.

(2) Assuming that a pair of integers $(k_{0},l_{0})$ satisfies condition (\ref{cond4}), we will prove that there is no
vector bundle which is $(k_{0}, l_{0})$-stable. Let $E$ be a vector bundle of rank $n$ and degree $d$ and
let $L_{0}\subset E$ be a line subbundle of maximal degree. By
(\ref{cota-H-sm}) and (\ref{cond4}) we obtain that
$
d - n\cdot deg(L_{0})=s_{1}(E)\leq (n-1)g \leq k_{0}(n-1)+l_{0}.
$
\noindent This implies that $\mu_{k_{0}}(L_{0})\geq \mu_{k_{0}-l_{0}}(E),$
and therefore $E$ is a non-$(k_{0},l_{0})$-stable vector bundle.
Now suppose that the pair of integers $(k_{0},l_{0})$ satisfies condition (\ref{cond5}), then
we consider a subbundle $F\subset E$ of rank $n-1$ and
maximal degree and the rest of the proof goes just as before.
\end{proof}

Finally, we give a necessary and sufficient general condition for the existence of $(k, l)$-stable vector bundles over an algebraic curve $X$.

\begin{prop}\label{thm-necsuf}
Let $X$ be a non-singular projective curve of genus $g\geq 2$, $n$ be a positive integer and $(k,l)$ be any pair of integers.
Then there exist $(k,l)$-stable vector bundles of rank $n$ and degree $d$ if and only if the pair $(k,l)$ satisfies the inequality
\begin{eqnarray}\label{eq-ssi}
k(n-m)+ ml <m(n-m)(g-1)+\delta_{m},
\end{eqnarray}
for all integers $m$ with $1\leq m\leq n-1.$
\end{prop}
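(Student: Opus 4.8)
The plan is to reduce both directions to Remark \ref{kl-vs-st}, which restates $(k,l)$-stability of a bundle $E$ of rank $n$ and degree $d$ as the collection of numerical inequalities $s_m(E)>k(n-m)+ml$ for $1\leq m\leq n-1$, and then to invoke Hirschowitz's sharp estimate \eqref{igual-H-sm}. If $n=1$ the range of $m$ is empty, so both \eqref{eq-ssi} and $(k,l)$-stability hold vacuously; so I would assume $n\geq 2$ throughout.

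The necessity direction I expect to be immediate: if $E$ is $(k,l)$-stable, then $s_m(E)>k(n-m)+ml$ for every $m$ with $1\leq m\leq n-1$ by Remark \ref{kl-vs-st}, while $s_m(E)\leq m(n-m)(g-1)+\delta_m$ by \eqref{igual-H-sm}; chaining the two yields $k(n-m)+ml< m(n-m)(g-1)+\delta_m$, which is exactly \eqref{eq-ssi}.

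For the sufficiency direction, assuming \eqref{eq-ssi} for all $m$, the strategy is to exhibit a \emph{single} bundle $E$ of rank $n$ and degree $d$ with $s_m(E)=m(n-m)(g-1)+\delta_m$ for every $m$ simultaneously: such an $E$ then satisfies $s_m(E)>k(n-m)+ml$ for all $m$ by hypothesis and hence is $(k,l)$-stable by Remark \ref{kl-vs-st}. To produce such an $E$ I would first observe that for each fixed $m$ the maximal degree of a rank-$m$ subbundle is upper semicontinuous in families, so $s_m$ is lower semicontinuous, whence the locus of bundles of rank $n$ and degree $d$ attaining the Hirschowitz bound for $m$ is open; it is non-empty because equality in \eqref{igual-H-sm} holds for a general bundle. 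Using irreducibility of the family of all vector bundles of rank $n$ and degree $d$ over $X$, each of these $n-1$ loci is dense open, so their intersection is a non-empty open set, and any $E$ in it works.

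The main obstacle is precisely this last step, i.e.\ upgrading ``equality in \eqref{igual-H-sm} holds for a general bundle, for each fixed $m$'' to ``there is one bundle realising equality for all $m$ at once''. It rests on the irreducibility of the parameter space of vector bundles of rank $n$ and degree $d$ together with the openness of each maximal-Segre locus; granting these, it is only a finite intersection of dense open subsets. (Incidentally, for $n\geq 2$ and $g\geq 2$ such a general bundle is automatically stable, since $m(n-m)(g-1)+\delta_m\geq (n-1)(g-1)\geq 1$ makes all its Segre invariants positive, so one may equally well argue inside the irreducible moduli space $M^s_X(n,d)$.)
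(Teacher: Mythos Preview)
Your proof is correct and follows essentially the same approach as the paper: both directions are reduced to the characterisation of $(k,l)$-stability via Segre invariants together with Hirschowitz's sharp bound \eqref{igual-H-sm}. The only difference is cosmetic: where the paper simply cites \cite{Hirs-pro} for the fact that a general bundle attains $s_m(E)=m(n-m)(g-1)+\delta_m$ for all $m$ simultaneously, you spell this out via lower semicontinuity of $s_m$ and irreducibility of the moduli, which is a reasonable way to justify the same assertion.
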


\begin{proof}
Suppose that $E$ is a $(k,l)$-stable vector bundle of rank $n$ and degree $d$.
Combining (\ref{equivalenciaskl}) and (\ref{igual-H-sm}), we obtain that
$$k(n-m)+ml<s_{m}(E)\leq m(n-m)(g-1)+\delta_{m}$$
for all $m$ and this implies (\ref{eq-ssi}).

Conversely, let the pair $(k,l)$ satisfy the inequality (\ref{eq-ssi}),
for all $m$ with $1\leq m\leq n-1$.
Then by (\ref{igual-H-sm})
the general vector bundle $E$ has an $m$-Segre invariant given by
$s_{m}(E)= m(n-m)(g-1)+\delta_{m}$
for all $m$ (see \cite{Hirs-pro}).
It follows therefore that $E$ is $(k,l)$-stable by using (\ref{eq-ssi}) and (\ref{equivalenciaskl}).
This completes the proof.
\end{proof}

Let $X$ be a non-singular projective curve of genus $g\geq 2$, $n$ be a positive integer and $(k,l)$ be any pair of integers. Furthermore, let
 \begin{equation}\label{ec1}
0\leq k(n-1)+l<(n-1)(g-1),
\end{equation}
\begin{equation}\label{ec2}
0 \leq k+l(n-1)<(n-1)(g-1).
\end{equation}

Under these conditions, if $E$ is $(k,l)$-stable,
then the left parts $0\leq k(n-1)+l$ and $0\leq k+l(n-1)$ of the above inequalities imply that $E$ is in fact stable.

Hence there always exist $(k,l)$-stable vector bundles over $X$, which are also stable.
Thus, if the pair of integers $(k,l)$ satisfies the above inequalities, then $(k,l)$-stability determines an open subscheme $M_X^{k,l}(n,d)$
parameterising the $(k, l)$-stable vector bundles inside the moduli space $M_X^{s}(n,d)$ of stable vector bundles over $X$ as $(k, l)$-stability
is an open property. The codimension of this locus can be determined as follows.

\begin{thm}\label{codimprop}
Let $k,l$ be integers such that
$0\leq k(n-1)+l\leq (n-1)(g-1)$ and
$0\leq k+l(n-1)\leq (n-1)(g-1)$.
Then,
$$
codim\ (M_X^s(n,d)\setminus M_X^{k,l}(n,d))\geq \min \left\{\begin{array}{c}
                                         (n-1)(g-1)-k(n-1)-l, \\
                                         (n-1)(g-1)-k-l(n-1)
                                       \end{array}
\right\}.
$$
\end{thm}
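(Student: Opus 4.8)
The plan is to estimate the dimension of the locus of stable bundles which fail to be $(k,l)$-stable, and show it is at least the stated codimension below the full dimension $n^2(g-1)+1$ of $M_X^s(n,d)$. Following Remark \ref{A}, a stable bundle $E$ which is not $(k,l)$-stable admits a proper subbundle $F$ of rank $m$ and degree $\delta$ with $\mu_k(F)\geq \mu_{k-l}(E)$, i.e. $dm-n\delta\leq k(n-m)+ml$, together with $d/n>\delta/m$ (so $dm-n\delta>0$). So the bad locus is the union over $1\leq m\leq n-1$ and over the finitely many admissible $\delta$ of the images $\theta_{\mathcal{E}}(P^{st})$ of the classifying maps constructed in the paragraphs preceding Corollary \ref{B}. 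The first step is therefore to bound $\dim \theta_{\mathcal{E}}(P^{st})$ from above for each $(m,\delta)$.

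Second, I would reuse the dimension count already carried out in the proof of Theorem \ref{prop-nonempti}(1): by \cite[Proposition 2.4]{NR-def} the number of moduli of such extensions is bounded by
$$
(n^2-mn+m^2)(g-1)+1+dm-n\delta,
$$
hence
$$
\dim M_X^s(n,d)-\dim\theta_{\mathcal{E}}(P^{st})\geq n^2(g-1)+1-\bigl[(n^2-mn+m^2)(g-1)+1+dm-n\delta\bigr]=m(n-m)(g-1)-(dm-n\delta).
$$
Now I bound $dm-n\delta$ above. Since $dm-n\delta\leq k(n-m)+ml$ by the instability hypothesis, the codimension contributed by the stratum $(m,\delta)$ is at least $m(n-m)(g-1)-k(n-m)-ml$. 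It remains to minimise the quantity $f(m):=m(n-m)(g-1)-k(n-m)-ml$ over the integers $1\leq m\leq n-1$, and this is the only genuinely computational point. The function $f$ is (the restriction of) a downward-opening parabola in $m$, so its minimum over the interval $[1,n-1]$ is attained at one of the endpoints $m=1$ or $m=n-1$. Evaluating: $f(1)=(n-1)(g-1)-k(n-1)-l$ and $f(n-1)=(n-1)(g-1)-k-l(n-1)$. Therefore each stratum has codimension at least $\min\{f(1),f(n-1)\}$, which is exactly the claimed bound; taking the union over the finitely many strata does not decrease the codimension.

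The main obstacle — or rather the point that needs the most care — is justifying that the dimension bound from Theorem \ref{prop-nonempti}(1) really does control $\dim\theta_{\mathcal{E}}(P^{st})$ uniformly, i.e. that the fibre dimension of the classifying map is accounted for and that one may indeed reduce to the case where $F$ and $E/F$ are stable (via Corollary \ref{B}, since $\theta_{\mathcal{E}}(P^{st}|_{H'})$ is dense in $\theta_{\mathcal{E}}(P^{st})$, the full image has the same dimension as the stable-stratum image). Once that reduction is in place, the extension-counting argument of \cite[Proposition 2.4]{NR-def} applies verbatim and the rest is the elementary endpoint analysis of the parabola $f(m)$; note the hypotheses only use the weak inequalities $\leq (n-1)(g-1)$, so the conclusion is a non-strict codimension bound, consistent with the statement.
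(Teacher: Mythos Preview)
Your proposal is correct and follows essentially the same approach as the paper's own proof: bound the dimension of each $(m,\delta)$-stratum by the extension count $(n^2-mn+m^2)(g-1)+1+dm-n\delta$, use $dm-n\delta\leq k(n-m)+ml$, and then observe that the resulting codimension function $f(m)=m(n-m)(g-1)-k(n-m)-ml$ is a concave parabola whose minimum on $[1,n-1]$ occurs at an endpoint. If anything, your write-up is slightly more careful than the paper's, which contains a sign slip and writes ``maximum'' where ``minimum'' is meant in the parabola step.
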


\begin{proof}
  Let $E\in M_X^s(n,d)\setminus M_X^{k,l}(n,d)$ be a vector bundle such that there exists a subbundle $F\subset E$ of rank $m$
and degree $\delta$ which satisfies $\mu_{k-l}(E)\leq\mu_{k}(F)$.
 We have, as in the proof of Theorem
\ref{prop-nonempti},  that the dimension of such stable vector bundles is
$(n^{2}-nm+m^{2})(g-1)+1+dm -n\delta$.
Moreover, this number is bounded above by
$(n^{2}-nm+m^{2})(g-1)+1+(n-m)k+ml.$ Thus,
$
\dim M(n,d)-\dim (M_X^s(n,d)\setminus M_X^{k,l}(n,d))\geq (nm-m^{2})(g-1)-(n-m)k-ml.
$
\noindent Considering $m$ as a parameter variable, we can see that the maximum of
$(nm-m^{2})(g-1)-(n-m)k+ml$ is obtained
whenever $m=1$ or $m=n-1$. Consequently, the codimension of $M_X^s(n,d)\setminus M_X^{k,l}(n,d)$ is bounded below
by $
\min \{(n-1)(g-1)-k(n-1)-l, \ (n-1)(g-1)-k-l(n-1)\}.$
This gives the desired conclusion.
\end{proof}

\section{Moduli spaces of $(k,l)$-stable vector bundles over an algebraic curve}

In this section we will study the moduli problem and the associated moduli functor for $(k,l)$-stable vector bundles over an algebraic curve.
Though this moduli problem is similar to the moduli problem of stable vector bundles we get a refinement and filtration as we can vary the pair $(k, l)$ of integers.

First, we will need to introduce the notion of families of $(k, l)$-stable vector bundles over an algebraic curve and an adequate equivalence relation among them.

\begin{defn} Let $X$ be a smooth projective algebraic curve and let $T$ be a scheme over $\Spec(\F)$.  A {\it family of $(k,l)$-stable vector bundles of rank $n$
and degree $d$ over $X$} parametrised by $T$ is a vector bundle $E$ over $X\times T$
such that for each point $t$ of $T$, the restriction $E_{t}$ is a $(k,l)$-stable vector bundle of rank $n$ and degree $d$
over $X$.
\end{defn}

We define an equivalence relation for families of $(k,l)$-stable vector bundle over $X$ as follows:
Two families $E$ and $E'$ of $(k, l)$-stable vector bundles parametrised by the scheme $T$ are {\it equivalent},
denoted by $E\sim E'$, if there exists a line bundle over $T$ such that $E\otimes p^{*}_{2}L$ and
$E'$ are isomorphic, where $p^{*}_{2}L$ is the pullback of $L$ along the projection morphism $p_2: X\times T\rightarrow T$.

Observe that, when $(k,l)=(0,0)$ this is precisely the equivalence relation normally considered for stable vector bundles over algebraic curves.\\

Let us now consider the moduli functor for $(k, l)$-stable vector bundles over $X$
$$\mathcal{M}_X^{k, l}(n, d): (Sch/\Spec(\F))^{op}\rightarrow Sets,$$
which associates to any scheme $T$ the set $\mathcal{M}_X^{k,l}(n,d)(T)$ of equivalence classes of
families of $(k,l)$-stable vector bundles and to any morphism of schemes $f: T'\rightarrow T$ the map of sets
$f^*: \mathcal{M}_X^{k,l}(n,d)(T)\rightarrow \mathcal{M}_X^{k,l}(n,d)(T')$ induced via the pullback operation.

We will study the representability and corepresentability of the moduli functor functor $\mathcal{M}_X^{k,l}(n,d)$, or in other words the existence of a fine or coarse moduli space for $(k, l)$-stable vector bundles over $X$.  This will depend on the rank $n$ and degree $d$ as in the case of stable vector bundles,
but in addition also on the paricular pair $(k,l)$ of integers. We have to consider two general cases.
In the first case we will assume that the pair $(k,l)$ of integers satisfies the inequalities (\ref{ec1}) and (\ref{ec2}).
In the second case we consider a more general situation, namely when for the pair $(k,l)$ of integers we have that
$k(n-1)+l<0$ or $k+l(n-1)<0$.

In the first case, the representability or corepresentability
of the moduli functor is basically a consequence of the representability or corepresentability of the moduli functor
for stable vector bundles over the algebraic curve $X$. For this, remember that the moduli functor for stable vector bundles over $X$
$$\mathcal{M}_X^{s}(n,d): (Sch/\Spec(\F))^{op}\rightarrow Sets$$
is representable if and only if $n$ and $d$ are coprime (see \cite{Ra}, \cite{MFK}). So if $\mathcal{M}_X^{s}(n,d)$ is representable, then there exists a scheme $M_X^{s}(n,d)$, which represents the moduli functor $\mathcal{M}^{s}_X(n,d)$ and therefore we get also a
universal family $\mathcal{U}$ of stable vector bundles parametrised by the scheme $M^{s}_X(n,d)$.
Now, if the pair of integers $(k,l)$ satisfies (\ref{ec1}) and (\ref{ec2}), then as we saw before $(k,l)$-stability implies stability.
Moreover, as $(k,l)$-stability is an open condition, there exists a non-empty open subscheme
$M^{k,l}_X(n,d)\subset M^{s}_X(n,d)$ which represents the moduli functor $\mathcal{M}_X^{k,l}(n,d)$ and the restriction of the universal family for stable bundles $\mathcal{U}|_{M_X^{k,l}(n,d)}$ to this subscheme is a universal family for $(k, l)$-stable bundles.  On the other hand, if $n$ and $d$ are not coprime, then  $\mathcal{M}_X^{s}(n,d)$ is universally corepresentable by a scheme $M_X^{s}(n,d)$ (see \cite[Definition 2.2.1]{HL} and \cite[Theorem 4.3.4]{HL}). Therefore the open subscheme $M_X^{k,l}(n,d)$ corepresents the moduli functor $\mathcal{M}_X^{k,l}(n,d)$.

In contrast, considering now the second case, where for the pair $(k, l)$ of integers we have
$k(n-1)+l<0$ or $k+l(n-1)<0$, then there exist semistable vector bundles which are
$(k,l)$-stable. Moreover, if $k$ and $l$ happen to be negative enough, then there are in fact unstable vector bundles
which are $(k,l)$-stable. This follows because for any vector bundle $E$
the slopes of its subbundles are always bounded above \cite[Lemma 2]{S}. Let us give two concrete examples to illustrate this.

\begin{exa}\label{exa-unssemi}
Let $E$ be an unstable vector bundle over $X$ of rank $rk(E)=2$ and degree $deg(E)=d$.
Let $L\subset E$ be a line subbundle of maximal degree. As
$deg(L)$ is bounded above, it follows that
$
deg(E)-2deg(L)
$
is bounded below. Hence if the pair $(k,l)$ is such that $deg(E)-2deg(L)>k+l,$ then
$E$ is $(k,l)$-stable.

\end{exa}
\begin{exa}
  Consider an unstable vector bundle $E$ over $X$ of rank $rk(E)=3$.
  Let $F\subset E$ be a subbundle of rank $rk(F)=2$ and maximal degree
  and let $L\subset E$ be a line subbundle of maximal degree.
  Suppose that the pair $(k,l)$ satisfies $2\,deg(E)-3\,deg(F)>k+2l$ and $deg(E)+3\,deg(L)>2k+l$.
  Then $E$ is an unstable and $(k,l)$-stable vector bundle.
\end{exa}

These last two examples can be extended to any rank, because if $E$ is a vector bundle
of rank $n$ then if $k$ and $l$ are negative enough, there exist unstable vector bundles of rank $n$ and degree $d$, which are $(k,l)$-stable.
For this just take an unstable vector bundle $E$ of rank $rk(E)=n$, a line subbundle $L\subset E$
and a subbundle $F\subset E$ of rank $rk(F)=n-1$ with $(n-1)\,deg(E)-n\,deg(F)>k+(n-1)l$ and $deg(E)+n\,deg(L)>(n-1)k+l$.
Therefore it follows that there are unstable vector bundles of rank $n$
and degree $d$ over an algebraic curve $X$, which are $(k,l)$-stable as soon as the integers $k$ and $l$ are negative enough.

Moreover, if $k(n-1)+l<0$ or $k+l(n-1)<0$ and the integers $n$ and $d$ are not coprime, then
the functor $\mathcal{M}_{X}^{k,l}(n,d)$ is not corepresentable. The reason for this is
that under these conditions there do exist semistable vector bundle of rank $n$ and degree $d$
which are also $(k,l)$-stable.

\begin{prop}\label{noncorep1}
   If $(k,l)$ is a pair of integers such that $k(n-1)+l<0$ or $k+l(n-1)<0$ and the integers $n$ and $d$ are not coprime
   then the moduli functor $\mathcal{M}_X^{k, l}(n, d): (Sch/\Spec(\F))^{op}\rightarrow Sets$ is not corepresentable.
\end{prop}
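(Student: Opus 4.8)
The strategy is to construct a one‑parameter family of $(k,l)$-stable bundles whose generic fibre is not isomorphic to its special fibre, and to show that a scheme corepresenting $\mathcal{M}_X^{k, l}(n, d)$ would be forced to identify the two corresponding isomorphism classes, contradicting the point‑separating property of a coarse moduli space.

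The first step is to fix the bundle to be degenerated. Under the hypothesis $k(n-1)+l<0$ or $k+l(n-1)<0$ the quantity $k(n-m)+ml$ is negative for some $m$ with $1\le m\le n-1$, so by Remark \ref{kl-vs-st} the $(k,l)$-stability condition is strictly weaker than stability for subbundles of that rank; and since $n$ and $d$ are not coprime there is a rank $m_{0}=n/\gcd(n,d)$ with $1\le m_{0}\le n-1$ for which stable bundles of slope $d/n$ exist. Combining this with Hirschowitz's estimate $(\ref{igual-H-sm})$ and the dimension count from the proof of Theorem \ref{prop-nonempti}, I would produce a $(k,l)$-stable bundle $E$ of rank $n$ and degree $d$ that is not stable, realised as a non‑split extension
$$0\to F\to E\to G\to 0$$
with $F,G$ stable of slope $d/n$ and ranks $m_{0}$ and $n-m_{0}$, chosen general enough that the polystable bundle $E_{0}:=F\oplus G$ is again $(k,l)$-stable. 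Here one checks, via Remark \ref{kl-vs-st}, that $s_{m_{0}}(E_{0})=s_{n-m_{0}}(E_{0})=0$ and that the remaining Segre invariants of $E_{0}$ are large for general $F,G$, so that the only inequalities at stake are $k(n-m_{0})+m_{0}l<0$ and $km_{0}+(n-m_{0})l<0$, which are forced by the hypothesis. By construction $E\not\cong E_{0}$, and both are $(k,l)$-stable.

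Next I would form the degeneration: rescaling the extension class by the coordinate on $\A^1$ yields a vector bundle $\mathcal{E}$ on $X\times\A^1$ with $\mathcal{E}_{t}\cong E$ for $t\neq 0$ and $\mathcal{E}_{0}\cong E_{0}$. Since $(k,l)$-stability is an open property, $\mathcal{E}$ is a family of $(k,l)$-stable bundles and so defines an element of $\mathcal{M}_X^{k, l}(n, d)(\A^1)$. Now suppose, for contradiction, that $\eta\colon\mathcal{M}_X^{k, l}(n, d)\to h_{M}$ corepresents the functor by a scheme $M$, which we may take to be locally of finite type over $\Spec(\F)$, as it is then a coarse moduli space. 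The family $\mathcal{E}$ induces a morphism $\varphi\colon\A^1\to M$, and by naturality of $\eta$ we have $\varphi(t)=\eta([E])$ for every closed point $t\neq 0$; thus $\varphi$ is constant on the dense open subscheme $\G_m\subset\A^1$ with value the closed point $\eta([E])$, and therefore $\varphi$ is constant on all of $\A^1$. Evaluating at $0$ and using naturality again gives $\eta([E_{0}])=\varphi(0)=\eta([E])$. Since $E_{0}\not\cong E$ and the geometric points of a coarse moduli space are in bijection with isomorphism classes of $(k,l)$-stable bundles, this is a contradiction, so no such $M$ exists.

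The main obstacle is the first step: verifying that under the stated inequalities there genuinely exists a $(k,l)$-stable bundle which is not stable and which, in addition, admits a $(k,l)$-stable polystable degeneration $E_{0}$. This is precisely where the interplay between the two inequalities and the non‑coprimality of $(n,d)$ enters, and it leans on the existence results and explicit constructions developed earlier (Remark \ref{kl-vs-st}, Theorem \ref{prop-nonempti} and the examples following it, in particular Example \ref{exa-unssemi}). A secondary point, if one wants to exclude corepresentability by an arbitrary (possibly non‑separated, non point‑bijective) scheme rather than merely the non‑existence of a coarse moduli space, is to note that any corepresenting $M$ must be the categorical quotient by $PGL_{N}$ of the locus $R^{k,l}$ of $(k,l)$-stable points in a suitable Quot scheme; there the orbit of $E$ is not closed, its closure meeting the orbit of $E_{0}$, and feeding the morphisms $\A^1\to M$ constructed above into the universal property of the categorical quotient reproduces the same contradiction.
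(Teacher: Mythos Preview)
Your overall strategy coincides with the paper's: build a strictly semistable, indecomposable $(k,l)$-stable bundle $E$ as a nonsplit extension of two stable bundles of slope $d/n$, degenerate it over $\mathbb{A}^1$ to the direct sum $E_0$, and use the resulting jump to contradict corepresentability. You are in fact more explicit than the paper in explaining why the jump forces $\eta([E_0])=\eta([E])$ in any putative coarse moduli space.

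There is, however, a genuine gap in your first step. You choose the ranks of the two factors to be $m_0=n/\gcd(n,d)$ and $n-m_0$, and then assert that the two conditions needed for $E_0=F\oplus G$ to be $(k,l)$-stable, namely
\[
k(n-m_0)+m_0 l<0\quad\text{and}\quad km_0+(n-m_0)l<0,
\]
are ``forced by the hypothesis''. They are not. The quantity $k(n-m)+ml$ is linear in $m$, so the hypothesis $k(n-1)+l<0$ \emph{or} $k+l(n-1)<0$ only controls the endpoints $m=1$ and $m=n-1$, not intermediate values. For a concrete failure take $n=4$, $d=2$ (so $m_0=2$), $k=-1$, $l=2$: then $k(n-1)+l=-1<0$ so the hypothesis holds, but $k(n-m_0)+m_0l=2\ge 0$, and your $E_0$ is \emph{not} $(k,l)$-stable. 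In that situation the family $\mathcal{E}$ no longer lies in $\mathcal{M}_X^{k,l}(n,d)(\mathbb{A}^1)$ and your contradiction evaporates.

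The paper avoids this by treating the two halves of the ``or'' separately: when $k(n-1)+l<0$ it takes the first Jordan--H\"older step to be a line bundle, and when $k+l(n-1)<0$ it takes the first step to have rank $n-1$. In each case the rank of the destabilising factor of $E_0$ is chosen so that the relevant Segre condition $s_m(E_0)=0>k(n-m)+ml$ is exactly the assumed inequality, rather than a consequence one has to deduce from it. If you adjust your choice of $m_0$ accordingly (rather than fixing $m_0=n/\gcd(n,d)$), your argument lines up with the paper's.
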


\begin{proof}
  Suppose that the pair of integers $(k,l)$ is such that $k(n-1)+l<0$. Let $E$ be a strictly semistable and indecomposable vector bundle.
  Furthermore, assume $E$ is such that the Jordan-H\"older filtration of $E$ is $0\subset L\subset E$.
 Then the associated graded $Gr(E)$ of $E$ is given as $Gr(E)=L\oplus F$ and $E\not\simeq L\oplus F$, with $F=E/L$.
   Then $E$ is $(k,l)$-stable. Moreover, we can construct a family $\mathcal{E}\rightarrow X\times\mathbb{A}^{1}$
   such that $\mathcal{E}|_{X\times\{0\}}=L\oplus F$ and $\mathcal{E}|_{X\times\{t\}}=E$ with $t\neq 0$ (see \cite[Lemma 16]{Se}).
   This gives rise to a jump phenomenon and determines the non-corepresentability of the moduli functor.
   In the case that $k+l(n-1)<0$, consider a strictly semistable and indecomposable  vector bundle $E'$,
   such that its Jordan-H\"older filtration is equal to $0\subset F' \subset E'$, where $F'$ is
   a rank $n-1$ subbundle of $E$. Hence $E'$ is a $(k,l)$-stable vector bundle such that
   $Gr(E')=F'\oplus L'$, where $L'=E/F$ is a line bundle. Moreover as before there exists a family
   $\mathcal{E'}\rightarrow X\times\mathbb{A}^{1}$
   such that $\mathcal{E'}|_{X\times\{0\}}=L'\oplus F'$ and $\mathcal{E'}|_{X\times\{t\}}=E'$ with $t\neq 0$.
    \end{proof}

We can now also give a description of the moduli spaces $M_X^{k,l}(n,d)$
of $(k,l)$-stable vector bundles of rank $n$ and degree $d$ over $X$ in terms of Geometric Invariant Theory,
always under the condition that the pair $(k, l)$ satisfies both inequalities (\ref{ec1}) and (\ref{ec2}).
This description will be needed later for comparison with
the respective moduli stacks. Recall that if the inequalities  (\ref{ec1}) and (\ref{ec2}) hold for a pair of integers $(k, l)$,
then $(k,l)$-stability implies stability and hence the moduli functor for $(k,l)$-stable vector bundles and its representability
by schemes follows in a natural way from the construction of the moduli space of stable bundles over $X$.
The construction of the moduli spaces $M_X^{k,l}(n,d)$ of $(k,l)$-stable vector bundles of rank $n$ and degree $d$ over $X$
is then a standard procedure using methods from Geometric Invariant Theory (see \cite{MFK}, \cite{HL}).
We will reproduce the construction here for the convenience of the reader as we will later need this explicit description
of the moduli spaces to compare them with the respective moduli stacks of $(k, l)$-stable vector bundles.

\begin{thm} Assume that the pair $(k, l)$ of integers satisfies the conditions that $0\leq k(n-1)+l < (n-1)(g-1)$
and $0\leq k+l(n-1)< (n-1)(g-1)$.
Then the moduli space $M_X^{k,l}(n,d)$ of $(k, l)$-stable vector bundles of rank $n$ and degree $d$ over $X$ exists and is an open subscheme of the moduli space $M_X^{s}(n,d)$ of stable vector bundles of rank $n$ and degree $d$.
\end{thm}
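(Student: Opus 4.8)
The plan is to assemble this theorem essentially from results already established in the preceding pages, since most of the real work has been done. First I would invoke Theorem \ref{prop-nonempti}(1): the hypotheses $0\leq k(n-1)+l < (n-1)(g-1)$ and $0\leq k+l(n-1)< (n-1)(g-1)$ in particular satisfy the inequalities (\ref{cond1}) and (\ref{cond2}), so there exist $(k,l)$-stable vector bundles of rank $n$ and degree $d$ over $X$, and moreover the proof of that theorem produces such bundles that are actually stable. This shows the locus we are after is non-empty. Next I would record the observation already made in the text just before the statement: under the left-hand inequalities $0\leq k(n-1)+l$ and $0\leq k+l(n-1)$, any $(k,l)$-stable bundle $E$ is stable. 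One checks this directly from Remark \ref{kl-vs-st}: if $E$ were not stable, there would be a proper subbundle $F$ of rank $m$ with $s_m(E)\leq 0\leq k(n-m)+ml$ (using the two boundary inequalities together with $1\le m\le n-1$ to interpolate), contradicting $(k,l)$-stability. Hence the set of $(k,l)$-stable bundles sits inside the set of stable bundles.

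Having placed the $(k,l)$-stable locus inside the stable one, I would then appeal to openness: by the Narasimhan--Ramanan Proposition (property (1) in the excerpt, i.e.\ \cite[Proposition 5.3]{NR-Geo}), $(k,l)$-stability is an open condition in families. Concretely, apply this to the universal (or a local versal) family of stable bundles parametrised by $M_X^s(n,d)$: the subset of points $[E]\in M_X^s(n,d)$ at which $E$ is $(k,l)$-stable is open. Call this open subscheme $M_X^{k,l}(n,d)$; by the first paragraph it is non-empty. It remains to see that this open subscheme actually (co)represents the moduli functor $\mathcal{M}_X^{k,l}(n,d)$. When $n$ and $d$ are coprime, $M_X^s(n,d)$ is a fine moduli space with universal family $\mathcal{U}$, and one checks that the restriction $\mathcal{U}|_{M_X^{k,l}(n,d)}$ is a universal family of $(k,l)$-stable bundles: any family of $(k,l)$-stable bundles over $T$ is in particular a family of stable bundles, hence classified by a unique morphism $T\to M_X^s(n,d)$, which factors through $M_X^{k,l}(n,d)$ because $(k,l)$-stability is open and holds fibrewise. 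When $n$ and $d$ are not coprime, the same factorisation argument shows that the corepresentability of $\mathcal{M}_X^s(n,d)$ by $M_X^s(n,d)$ (\cite[Theorem 4.3.4]{HL}) restricts to corepresentability of $\mathcal{M}_X^{k,l}(n,d)$ by the open subscheme $M_X^{k,l}(n,d)$.

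The main (and really the only) obstacle is the verification that the left-hand inequalities force stability, and more precisely that the bound $s_m(E)\leq k(n-m)+ml$ can be derived for \emph{every} $m$ with $1\le m\le n-1$, not just $m=1$ and $m=n-1$. The clean way is a convexity/interpolation argument: the function $m\mapsto k(n-m)+ml$ is affine in $m$, so its minimum over $1\le m\le n-1$ is attained at an endpoint, hence $k(n-m)+ml\geq \min\{k(n-1)+l,\ k+l(n-1)\}\geq 0$; combined with $s_m(E)\le 0$ for a destabilising $F$ of rank $m$ this contradicts Remark \ref{kl-vs-st}. Everything else is a formal consequence of openness and of the already-cited (co)representability of the moduli functor of stable bundles, so the proof is short; I would essentially just carefully cite Theorem \ref{prop-nonempti}, Remark \ref{kl-vs-st}, property (1) of the Narasimhan--Ramanan Proposition, and \cite[Theorem 4.3.4]{HL}, and spell out the fibrewise-factorisation step.
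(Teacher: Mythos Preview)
Your argument is correct, and in fact it is precisely the reasoning the paper already gives in the two paragraphs \emph{preceding} the theorem (non-emptiness from Theorem~\ref{prop-nonempti}, $(k,l)$-stable $\Rightarrow$ stable from the left-hand inequalities, openness from \cite[Proposition~5.3]{NR-Geo}, and then restriction of the universal/corepresenting object via \cite[Theorem~4.3.4]{HL}). Your interpolation argument for $k(n-m)+ml\geq 0$ via affinity in $m$ is a clean way to fill in the one step the paper leaves implicit.

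However, the paper's proof of the theorem itself takes a different route: it redoes the GIT construction from scratch, exhibiting $M_X^{k,l}(n,d)$ directly as a GIT quotient $R^{k,l}//PGL(N)$ of an open subscheme $R^{k,l}$ of the relevant Quot scheme. The paper is explicit about why: this presentation as a quotient is needed later (Corollary~\ref{diagerb} and the surrounding discussion in Section~5) to compare $M_X^{k,l}(n,d)$ with the quotient-stack description $[R^{k,l}/GL(N)]$ of $\bndkl$ and to identify the morphism $\bndkl\to M_X^{k,l}(n,d)$ as a $\mathbb{G}_m$-gerbe. Your approach is shorter and perfectly adequate for the bare existence statement, but it does not by itself give the GIT presentation that the later sections rely on. One minor caution: when $\gcd(n,d)\neq 1$ there is no universal family over $M_X^s(n,d)$, so your ``apply openness to the universal family'' step should really be phrased either on the Quot scheme $R^s$ (where there is one) or, as the paper does, via the universal corepresentability of $\mathcal{M}_X^s(n,d)$.
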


\begin{proof}
Let $\mathcal{O}_{X}(1)$ be an ample line bundle over $X$.
There exist integers $t$ and  $N$ such that for any sheaf $E$ over $X$
of rank $n$ and degree $d$, $E(t):=E\otimes \mathcal{O}_{X}(t)$ is generated by sections
and $h^{0}(X,E(t))= N$. We define $V:=\mathcal{O}_{X}^{N}$ and $\mathcal{H}:=V\otimes \mathcal{O}_{X}(-t)$.
Thus, the surjection $\mathcal{H}\to E\to 0$ determines a closed point in the respective Quot-scheme
$Quot_{\mathcal{H}}^{n,d}$.

We now consider the open subscheme $R^{k,l}\subset Quot_{\mathcal{H}}^{n,d}$
given as follows:  The quotient sheaves $\mathcal{H}\to F\to 0$ parameterised
by $R^{k,l}$ are locally free, $(k,l)$-stable and such that $V=H^{0}(\mathcal{H}(t))\cong H^{0}(F(t))$.
The scheme $R^{k,l}$ therefore parametrises all $(k,l)$-stable vector bundles
together with a choice of a base for the vector space $H^{0}(X,E(t)).$

Hence $R^{k,l}$ parametrises all  $(k,l)$-stable vector bundles of rank $n$
and degree $d$ over $X$. The general linear group $GL(N)$ acts on $Quot_{\mathcal{H}}^{n,d}$ and $R^{k,l}$
is invariant under this action. Moreover, this action factors through $PGL(N)$.
Therefore, the moduli scheme of $(k,l)$-stable vector bundles exists and is given by the
GIT quotient $M_X^{k,l}(n,d)= R^{k,l}//PGL(N)$.
\end{proof}

\section{Geometry of the moduli spaces $A_X^{k,l}(n,d)$.}
In the last section we studied the moduli problem for $(k, l)$-stable vector bundles over an algebraic curve $X$ in the particular case when the pair $(k,l)$ meets the conditions (\ref{ec1}) and (\ref{ec2}). Now we will analyse what happens in the more general case when $(k,l)$ is {\it any} pair of integers.
As we mentioned in Example (\ref{exa-unssemi}) there exist $(k,l)$-stable vector bundles which are not necessarily stable.
Moreover, as we saw in the last section the moduli functor $\mathcal{M}^{k,l}_{X}(n,d)$ is not always even corepresentable.
For this reason we will later consider a more general approach to the classification problem using the language of algebraic stacks.
But before let us make the following general observations concerning $(k, l)$-stable vector bundles over an algebraic curve $X$ for {\it any} pair $(k, l)$ of integers.

Let $A_X^{k,l}(n,d)$ denote the set of isomorphism classes of $(k,l)$-stable vector bundles
of rank $n$ and degree $d$ over $X$ for any given pair $(k, l)$ of integers. By the definition of $(k,l)$-stable vector bundles we readily get the following filtrations of sets:
$$
\cdots\subset A_X^{k-4,l}(n,d)\subset A_X^{k-3,l}(n,d)\subset A_X^{k-2,l}(n,d)\subset A_X^{k-1,l}(n,d)\subset A_X^{k,l}(n,d)\subset\cdots
$$
$$
\cdots\subset A_X^{k,l-4}(n,d)\subset A_X^{k,l-3}(n,d)\subset A_X^{k,l-2}(n,d)\subset A_X^{k,l-1}(n,d)\subset A_X^{k,l}(n,d)\subset\cdots
$$

Furthermore, if $(k,l)=(0,0)$ then there is a bijection between $A_X^{0,0}(n,d)$ and the rational points of the scheme $M_X^{s}(n,d)$.
More generally, if $(k,l)$ meets the conditions $(\ref{ec1})$ and (\ref{ec2}), then $A_X^{k,l}(n,d)$ is in
bijection with the rational points of the moduli scheme $M^{k,l}_{X}(n,d)$ as defined in the last section.
This induces a geometrical structure on $A_X^{k,l}(n,d)$ making it into a scheme and in this case the geometry of the moduli space is given as discussed in
the last section.

Now we will in contrast discuss how to induce a geometric structure on the sets $A_X^{k,l}(n,d)$,  in the complementary cases, when the
inequalities $(\ref{ec1})$ and (\ref{ec2}) do not hold for the pair $(k, l)$ of integers.

By definition of $(k,l)$-stability, if the pair of integers $(k,l)$ does not satisfy
$0\leq k(n-1)+l$ or $0\leq k+l(n-1)$ (see conditions (\ref{ec1}) and (\ref{ec2})),
then any stable vector bundle over the algebraic curve $X$ is also $(k,l)$-stable i.e., as sets we have an inclusion
$M_X^{s}(n,d)\subset A_X^{k,l}(n,d)$.
However, if $k$ and $l$ are both negative enough, then there are semistable and unstable vector bundles
which are also $(k,l)$-stable. The following results present some of the structure that appears in these complementary cases.

\begin{lem}\label{lemmaE}
If $E$ is an element of $A_X^{-1,1}(n,nt)$, then $E$ is semistable.
\end{lem}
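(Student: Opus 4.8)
The plan is to argue by contradiction using the characterisation of $(k,l)$-stability in terms of Segre invariants from Remark \ref{kl-vs-st}. Suppose $E \in A_X^{-1,1}(n,nt)$ is not semistable. Then there is a proper subbundle $F \subset E$ with $\mu(F) > \mu(E) = t$; choosing $F$ of rank $m$ to be a destabilising subbundle of maximal degree among rank-$m$ subbundles, we have $\deg F \geq mt + 1$, hence $s_m(E) = md - n\deg F = mnt - n\deg F \leq mnt - n(mt+1) = -n$. On the other hand, $(k,l) = (-1,1)$ together with Remark \ref{kl-vs-st} says that $(k,l)$-stability of $E$ is equivalent to $s_m(E) > k(n-m) + ml = -(n-m) + m = 2m - n$ for all $m$ with $1 \leq m \leq n-1$. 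So I would compare $-n$ with $2m-n$: since $m \geq 1$ we get $2m - n \geq 2 - n > -n$, which contradicts $s_m(E) \leq -n$.

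First I would make precise why a non-semistable bundle of slope $t$ has a subbundle of integral slope strictly exceeding $t$ by at least $1/m$ in degree, i.e. $\deg F \geq mt+1$: this is just the observation that $\mu(F) > t$ with $\deg F, m, t$ integers forces $\deg F > mt$, hence $\deg F \geq mt + 1$. Then the Segre invariant bound $s_m(E) \leq -n$ follows immediately from the definition $s_m(E) = md - n\cdot(\deg F)_{\max}$ together with $d = nt$. I would then invoke Remark \ref{kl-vs-st} with $k = -1$, $l = 1$ to translate the hypothesis $E \in A_X^{-1,1}(n,nt)$ into the system of inequalities $s_m(E) > 2m - n$ for $1 \leq m \leq n-1$, and note that the rank $m$ of the destabilising subbundle lies in precisely this range (a destabilising subbundle is proper and nonzero). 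The contradiction $-n \geq s_m(E) > 2m-n \geq 2-n$ closes the argument.

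I do not expect a serious obstacle here; the only point requiring a little care is the passage from $\mu(F) > \mu(E)$ to the integral estimate $\deg F \geq mt + 1$, which relies crucially on $d$ being divisible by $n$ (that is the role of writing the degree as $nt$) so that $\mu(E) = t$ is itself an integer — without this, $\mu(F)$ could exceed $\mu(E)$ by an arbitrarily small amount and the estimate would only give $s_m(E) \leq -1$, which is not strong enough to beat $2m - n$ for small $m$. So the main thing to highlight in the write-up is exactly where the hypothesis $\deg E = nt$ enters.
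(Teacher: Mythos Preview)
Your proof is correct and follows essentially the same route as the paper's: both argue by contradiction, use the $(-1,1)$-stability inequality against a destabilising subbundle, and close by an integrality argument that hinges on $\mu(E)=t\in\Z$. The only cosmetic difference is that you phrase everything via the Segre invariant and Remark~\ref{kl-vs-st} (obtaining $s_m(E)\leq -n$ versus $s_m(E)>2m-n$), while the paper works directly with the slope inequality $0>tm-\deg F>-1+2m/n$ and observes no integer lies in that range; these are the same computation in different clothing.
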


\begin{proof}
Suppose that there exists a subbundle $F\subset E$, such that $\mu(F)>\mu(E)$.
By the $(-1,1)$-stability of $E$ we have that $\mu_{-2}(E)>\mu_{-1}(F)$ and therefore
$$
0> t-\mu(F)>\frac{2}{n}-\frac{1}{m}.
$$
But this would imply
$
0>tm-d(F)>-1+\frac{2m}{n},
$
which is impossible.
\end{proof}

By Lemma \ref{lemmaE} we therefore have a map
$$
A_X^{-1,1}(n,nt)\to M_X^{ss}(n,nt),
$$
where $M_X^{ss}(n,nt)$ is the moduli space of semistable vector bundles over $X$, and this map
sends the isomorphism class of $E$ at its $S$-equivalence class.

\begin{exa}
Let $t\in \mathbb{Z}$ be any integer and let $E\in A_X^{-1,1}(3,3t)$ be a strictly semistable vector bundle.
Hence for any subbundle $F$ of rank 2, we have that
$$\mu(E)-\mu(F)>0.$$
Furthermore, the Harder-Narasimhan filtration of $E$ is simply given as $0\subset L\subset E$ and
the associated graded is $Gr(E)=(E/L)\oplus L$, which determines the
$S$-equivalence class.
\end{exa}

We will now discuss  in detail how to induce a geometric structure on the sets $A_X^{k,l}(n,d)$, when $(k,l)$ satisfies
the inequalities
\begin{equation}\label{ec3}
k(n-1)+l<0,
\end{equation}
\begin{equation}\label{ec4}
k+l(n-1)<0.
\end{equation}

Again by the definition of $(k,l)$-stability, if the pair of integers $(k,l)$ satisfies the conditions (\ref{ec3}) and (\ref{ec4}),
then any stable vector bundle over $X$ is also $(k,l)$-stable i.e., as sets we have an inclusion
$M_X^{s}(n,d)\subset A_X^{k,l}(n,d)$.
And if $k$ and $l$ are both negative enough, then there exists again semistable and unstable vector bundles
which are also $(k,l)$-stable as we have seen before.

Recall also that any morphism of vector bundles can be factorized by
a morphism of maximal rank (see \cite[\S4]{NS}), i.e., if $f:E\to F$ is a morphism of vector
bundles, then we have the following diagram
$$
\xymatrix@1{E\ar[d]_{f}\ar[r]& E_{1}\ar[d]^{g}\ar[r]&0\\
F& F_{1}\ar[l]&0,\ar[l],\\
}
$$
where $g$ is of maximal rank.  The subbundle $F_{1}$ of $F$ is called the {\it subbundle determined by the image of} $f$
and $rk(f)$ is defined as the rank $rk(F_1)$.

\begin{lem}\label{lemmaA}
Let $E,F$ be two $(k,l)$-stable vector bundles over $X$.
If $f:E\to F$ is a morphism of vector bundles,
then we have:
$$
\mu_{k-l}(E)<\mu_{k-l}(F)-\frac{k+l}{rk(f)}.
$$
\end{lem}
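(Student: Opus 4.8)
The plan is to analyse the factorisation $f: E \to E_1 \xrightarrow{g} F_1 \hookrightarrow F$ through a morphism $g$ of maximal rank, where $F_1 \subset F$ is the subbundle determined by the image of $f$ and $E_1 = E/\ker f$ (the saturation quotient), so that $\mathrm{rk}(f) =: r = \mathrm{rk}(E_1) = \mathrm{rk}(F_1)$. If $f = 0$ there is nothing to prove since the claimed inequality should then be read as vacuous or handled separately; assume $f \neq 0$, so $1 \leq r \leq \min\{n, \mathrm{rk}(F)\}$. The idea is that $E_1$ is a quotient of $E$ of rank $r$ and $F_1$ is a subbundle of $F$ of rank $r$, and the $(k,l)$-stability of $E$ and $F$ bounds the degrees of these two pieces from above and below respectively, while the morphism $g$ of maximal rank forces $\deg E_1 \leq \deg F_1$.

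First I would record the degree inequality coming from $g$. Since $g: E_1 \to F_1$ is a morphism of maximal rank between bundles of the same rank $r$, it is generically an isomorphism, hence $\det g: \det E_1 \to \det F_1$ is a nonzero section of $(\det E_1)^{*} \otimes \det F_1$, giving $\deg E_1 \leq \deg F_1$. Next I would translate the $(k,l)$-stability hypotheses into slope bounds on $E_1$ and $F_1$. The subbundle $\ker f \subset E$ (its saturation) has $\mu_k(\ker f) < \mu_{k-l}(E)$, which rearranges into a lower bound for $\deg E_1 = \deg E - \deg(\ker f)$; explicitly $\mu_{k}(E_1)$, or rather the appropriate generalised slope of the quotient, is bounded below in terms of $\mu_{k-l}(E)$. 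Dually, $F_1 \subset F$ is a proper (if $r < \mathrm{rk} F$) subbundle, so $\mu_k(F_1) < \mu_{k-l}(F)$ gives an upper bound for $\deg F_1$; the borderline case $r = \mathrm{rk}(F)$ (i.e. $F_1 = F$, $g$ injective as a sheaf map onto a full-rank subsheaf) must be checked to still yield a compatible bound, which it does since then $\deg F_1 \leq \deg F$. Combining the lower bound on $\deg E_1$, the upper bound on $\deg F_1$, and $\deg E_1 \leq \deg F_1$, and carefully tracking the shifts by $k$ and $l$ and the division by $r = \mathrm{rk}(f)$, should produce exactly
$$
\mu_{k-l}(E) < \mu_{k-l}(F) - \frac{k+l}{\mathrm{rk}(f)}.
$$

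I expect the main obstacle to be the bookkeeping of the generalised slopes for the quotient $E_1$ and the subbundle $F_1$ of different ranks: one has to be precise about which generalised slope $\mu_{k}$, $\mu_{k-l}$, or $\mu_{k+l}$ attaches to $\ker f$, to $E_1$, to $F_1$, and to $E/F_1$, and to ensure the inequalities go in the right direction when $r$ is strictly less than $n$ versus equal to $n$, and similarly for $\mathrm{rk}(F)$. The boundary cases where $f$ is injective ($\ker f = 0$, so the lower bound on $\deg E_1$ degenerates) or where $F_1 = F$ should be handled by noting that the relevant strict inequality is replaced by a non-strict one coming directly from stability of the remaining genuine proper subbundle, and that the final strict inequality is preserved because the other factor still contributes a strict inequality. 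Once the slope dictionary is set up correctly, the algebra is a routine rearrangement of the three inequalities $\mu_k(\ker f) < \mu_{k-l}(E)$, $\mu_k(F_1) < \mu_{k-l}(F)$, and $\deg E_1 \leq \deg F_1$.
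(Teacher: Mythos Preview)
Your approach is exactly the paper's: factor $f$ as $E\twoheadrightarrow E_1\xrightarrow{g}F_1\hookrightarrow F$ with $g$ of maximal rank and $r=\mathrm{rk}(f)=\mathrm{rk}(E_1)=\mathrm{rk}(F_1)$, use $\deg E_1\le\deg F_1$, and apply $(k,l)$-stability of $E$ to the quotient $E_1$ and of $F$ to the subbundle $F_1$. The paper packages your three inequalities into the single chain
\[
\mu_{k-l}(E)<\mu_{-l}(E_1)\le\mu_{-l}(F_1)=\mu_k(F_1)-\frac{k+l}{r}<\mu_{k-l}(F)-\frac{k+l}{r},
\]
where the first inequality is the quotient form of $(k,l)$-stability (equivalent to $\mu_k(\ker f)<\mu_{k-l}(E)$), the middle one is $\deg E_1\le\deg F_1$, and the last one is $(k,l)$-stability of $F$ applied to $F_1$. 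No separate bookkeeping of $\mu_k$ versus $\mu_{k\pm l}$ is needed beyond the trivial identity $\mu_{-l}=\mu_k-(k+l)/r$.

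One point where your discussion goes wrong is the boundary cases. You say that when $E_1=E$ (i.e.\ $f$ injective) the first strict inequality ``is replaced by a non-strict one''. That is not so: one has $\mu_{k-l}(E)-\mu_{-l}(E)=k/r$, so for $k>0$ the inequality $\mu_{k-l}(E)\le\mu_{-l}(E_1)$ actually \emph{reverses}, and strictness from the $F_1$ side cannot rescue the chain. (For instance, any line bundle $E$ is vacuously $(k,l)$-stable, and for a suitable $(1,0)$-stable rank~$2$ bundle $F$ with maximal line subbundle $E\subset F$ the asserted inequality fails.) The symmetric issue arises when $F_1=F$ and $l>0$. The paper's one-line proof tacitly assumes $E_1$ is a proper quotient and $F_1$ a proper subbundle, i.e.\ $0<\mathrm{rk}(f)<\min(\mathrm{rk}\,E,\mathrm{rk}\,F)$; in the only application (the corollary immediately after), $E$ and $F$ have the same rank and one argues that a nonzero $f$ which is not an isomorphism must have $\mathrm{rk}(f)<n$, so this hypothesis is automatically satisfied there.
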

\begin{proof}
With the notations introduced above we readily see that
$$\mu_{k-l}(E)<\mu_{-l}(E_{1})\leq \mu_{-l}(F_{1})=\mu_{k}(F_{1})-(k+l)/rk(F_{1})<\mu_{k-l}(F)-(k+l)/rk(F_{1}),$$
which proves our assertion.
\end{proof}

From this it follows immediately:

\begin{cor}
Let $E,F$ be two $(k,l)$-stable vector bundles over $X$, which both have the same rank and degree.
If $k+l\geq 0$ and $Hom(E,F)\neq 0$, then $E\cong F$.
In particular, if $E$ is $(k,l)$-stable then $E$ is simple.
\end{cor}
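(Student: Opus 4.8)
The plan is to deduce everything from Lemma \ref{lemmaA}, handling separately the degenerate case in which the morphism has maximal rank. Suppose $Hom(E,F)\neq 0$ and pick a nonzero morphism $f\colon E\to F$; write $r:=rk(f)$, so $1\leq r\leq n$ where $n=rk(E)=rk(F)$.

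First I would show that necessarily $r=n$. Indeed, if $r<n$, then in the factorisation of $f$ through a morphism of maximal rank the quotient bundle $E_{1}$ of $E$ and the subbundle $F_{1}$ of $F$ are both \emph{proper} (they have rank $r<n$), so the two applications of $(k,l)$-stability used in the proof of Lemma \ref{lemmaA} are legitimate and the inequality $\mu_{k-l}(E)<\mu_{k-l}(F)-\frac{k+l}{r}$ holds. Since $E$ and $F$ have the same rank and the same degree, $\mu_{k-l}(E)=\mu_{k-l}(F)$, so this forces $0<-\frac{k+l}{r}$; as $r>0$ this means $k+l<0$, contradicting the hypothesis $k+l\geq 0$. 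Hence $r=n$.

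With $r=n$, the saturated kernel of $f$ has rank $n-n=0$, hence is zero, so $f$ is injective; likewise the subbundle $F_{1}\subseteq F$ determined by the image has full rank $n$ and so equals $F$, meaning $f$ is generically an isomorphism. Its cokernel is then a torsion sheaf whose length equals $deg(F)-deg(E)=0$, so $f$ is an isomorphism and $E\cong F$. For the last assertion I would take $F=E$: every nonzero endomorphism of a $(k,l)$-stable bundle $E$ is an automorphism, so $End(E)$ is a finite-dimensional division algebra over the algebraically closed field $\F$, whence $End(E)=\F$ and $E$ is simple.

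The only genuine subtlety, and the step I expect to need the most care, is the passage $r=n$: Lemma \ref{lemmaA} as stated is \emph{not} applicable when $f$ has maximal rank, since then the sub- and quotient bundles in its proof cease to be proper and the strict inequalities used there degenerate into equalities. This case must therefore be settled directly by the injectivity-plus-degree-of-cokernel argument above rather than by quoting the lemma. Everything else reduces to a straightforward substitution of $\mu_{k-l}(E)=\mu_{k-l}(F)$ into the conclusion of Lemma \ref{lemmaA}.
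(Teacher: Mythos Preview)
Your proof is correct and follows the same route as the paper, which simply records the corollary as an immediate consequence of Lemma~\ref{lemmaA}. You have in fact been more careful than the paper: your observation that the proof of Lemma~\ref{lemmaA} tacitly assumes $rk(f)<n$ (so that $E_1$ and $F_1$ are genuinely proper) is accurate, and your direct treatment of the full-rank case via the degree-of-cokernel argument is exactly what is needed to close that gap.
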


\begin{lem}\label{lemmaB}
If $E$ is an element of $A_X^{k,l}(n,d)$ such that $d> 2n(g-1-l)+l-k$,
then $H^{1}(E)=0.$
\end{lem}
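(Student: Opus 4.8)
The plan is to dualise the problem via Serre duality. Since $E$ is locally free on the smooth projective curve $X$, whose canonical line bundle $K_X$ has degree $2g-2$, Serre duality provides a natural isomorphism $H^{1}(X,E)^{\vee}\cong H^{0}(X,E^{\vee}\otimes K_X)=\mathrm{Hom}(E,K_X)$. Hence proving $H^{1}(E)=0$ is the same as proving $\mathrm{Hom}(E,K_X)=0$, and it is this vanishing that I would establish under the hypothesis $d> 2n(g-1-l)+l-k$.

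The key preliminary observation is that the line bundle $K_X$ is $(k,l)$-stable for \emph{every} pair $(k,l)$: it has no proper nonzero subbundle, so the defining inequality $\mu_{k}(F)<\mu_{k-l}(K_X)$ holds vacuously. Thus both $E$ and $K_X$ are $(k,l)$-stable bundles, which is exactly the hypothesis needed to apply Lemma \ref{lemmaA} to any nonzero morphism between them.

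Now I would argue by contradiction: suppose there is a nonzero $f\colon E\to K_X$. The subbundle of $K_X$ determined by the image of $f$ has the same rank as $K_X$, since a nonzero rank $1$ subsheaf of a line bundle saturates to the whole line bundle; hence $rk(f)=1$. Applying Lemma \ref{lemmaA} with $F=K_X$ then gives $\mu_{k-l}(E)<\mu_{k-l}(K_X)-(k+l)$. Substituting $\mu_{k-l}(K_X)=(2g-2)+k-l$ and $\mu_{k-l}(E)=(d+k-l)/n$ and clearing the denominator $n$, this reads $d+k-l<n(2g-2-2l)$, i.e. $d<2n(g-1-l)+l-k$, contradicting the hypothesis. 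Therefore no such $f$ exists, $\mathrm{Hom}(E,K_X)=0$, and consequently $H^{1}(E)=0$.

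I do not expect a serious obstacle here; the argument is short once the set-up is in place. The two points that need care are: (i) making sure Serre duality genuinely converts the desired cohomology vanishing into a $\mathrm{Hom}$-vanishing, so that Lemma \ref{lemmaA} becomes applicable, and that $K_X$ is legitimately counted as a $(k,l)$-stable bundle; and (ii) the slope bookkeeping, in particular correctly carrying the shift $-(k+l)/rk(f)$ with $rk(f)=1$ through and simplifying $n(2g-2-2l)-k+l$ to $2n(g-1-l)+l-k$. (If one wishes to avoid citing Lemma \ref{lemmaA}, a nonzero $f$ exhibits a quotient line bundle $E_{1}$ of $E$ that embeds into $K_X$; then $(k,l)$-stability of $E$ applied to the rank $n-1$ kernel of $E\to E_{1}$ bounds $\deg E_{1}$ from below, while the embedding bounds it from above, giving an estimate of the same shape.)
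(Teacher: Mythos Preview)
Your proof is correct and follows essentially the same route as the paper's: both argue by contradiction via Serre duality to obtain a nonzero map $E\to\omega_X$, then invoke Lemma~\ref{lemmaA} (the paper does so implicitly, writing $\mu_{k-l}(E)\leq\mu_{k-l}(\omega_X)-k-l$) and unwind the slope inequality to reach $d\leq 2n(g-1-l)+l-k$. Your explicit remark that $K_X$ is vacuously $(k,l)$-stable and your verification that $rk(f)=1$ make transparent what the paper leaves tacit, but the argument is the same.
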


\begin{proof}
Suppose that $H^{1}(E)\neq 0$, then by Serre duality $H^{0}(E^{*}\otimes \omega_{X})\neq 0$.
Hence $h:E\to \omega_{X}$ gives $\mu_{k-l}(E)\leq \mu_{k-l}(\omega_{X})-k-l=2g-2-2l$, i.e.,
$\mu_{k-l}(E)\leq 2(g-1-l)$ and $d\leq 2n(g-1-l)+l-k$, which  is a contradiction.
\end{proof}

\begin{lem}
If $E\in A_X^{k,l}(n,d)$, such that  $d>(2g-2l-1)n+l-k$,
then $E$ is generated by sections.
\end{lem}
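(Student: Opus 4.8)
The plan is to mimic the proof of Lemma \ref{lemmaB} but push the numerical bound further so that not just $H^1(E)$ vanishes but the evaluation map $H^0(X, E)\otimes \mathcal{O}_X \to E$ is surjective. The standard trick is that $E$ is generated by sections precisely when, for every point $x\in X$, the restriction map $H^0(X,E)\to H^0(X, E_x)$ is surjective, which by the long exact sequence associated to $0\to E\otimes\mathcal{O}_X(-x)\to E\to E_x\to 0$ is guaranteed once $H^1(X, E\otimes\mathcal{O}_X(-x))=0$. So the first step is to reduce the statement to the vanishing of $H^1(X, E(-x))$ for all $x\in X$.

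The second step is to observe that $E(-x) = E\otimes\mathcal{O}_X(-x)$ is again $(k,l)$-stable of rank $n$ (tensoring by a line bundle preserves $(k,l)$-stability, as noted right after the definition of $(k,l)$-stability) but now of degree $d-n$. Hence I would apply Lemma \ref{lemmaB} to $E(-x)$: it gives $H^1(X, E(-x))=0$ as soon as
$$
d-n > 2n(g-1-l)+l-k,
$$
i.e. as soon as $d > 2n(g-1-l)+l-k+n = (2g-2l-1)n+l-k$, which is exactly the hypothesis. Since this bound is independent of $x$, the vanishing holds simultaneously for all $x\in X$, and combining with the first step we conclude that $E$ is generated by sections.

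I do not expect any real obstacle here; the only point requiring a line of care is the reduction in the first step — one should note that it suffices to treat closed points $x$ and that surjectivity of the evaluation map on fibres over all closed points implies global generation, which is standard for coherent sheaves on a Noetherian scheme. A minor bookkeeping check is that the arithmetic $2n(g-1-l)+l-k+n = (2g-2l-1)n+l-k$ is correct, which it is since $2n(g-1-l)+n = n(2g-2-2l+1) = n(2g-2l-1)$. So the proof is essentially a one-line application of Lemma \ref{lemmaB} after twisting down by a point.
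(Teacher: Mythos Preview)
Your proposal is correct and follows essentially the same approach as the paper: both use the exact sequence $0\to E(-x)\to E\to E_x\to 0$ to reduce global generation to the vanishing of $H^1(E(-x))$, then apply Lemma \ref{lemmaB} to $E(-x)$ (which is $(k,l)$-stable of degree $d-n$) to obtain the stated numerical bound. The paper phrases the last step as a contrapositive, but the argument is identical.
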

\begin{proof}
Consider the exact sequence
$$0\to E(-x)\to E\to E_{x}\to 0,$$
and the associated long exact sequence in cohomology
$$
0\to H^{0}(E(-x))\to H^{0}(E)\to H^{0}(E_{x})\to H^{1}(E(-x))\to \cdots
$$
and observe that  $deg(E(-x))=d-n.$

Now, if $H^{1}(E(-x))\neq 0$ then Lemma \ref{lemmaB} implies
$d-n\leq 2n(g-1-l)+l-k$, i.e., $d\leq (2g-2l-1)n+l-k$, which is a contradiction.
Therefore, $H^{1}(E(-x))=0$ and hence $E$ is generated by sections.
\end{proof}

\begin{lem}
Let $k\leq l$  and let $E$ be a $(k,l)$-stable vector bundle of $X$ of slope $\mu$.
Suppose that $F\subset E$ is a subbundle of slope $\mu$.
Then $E/F$ is $(k,l)$-stable and $\mu(E/F)=\mu$.
\end{lem}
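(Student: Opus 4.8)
The plan is to pull subbundles of $E/F$ back to subbundles of $E$ containing $F$, and then run a short slope computation in which the hypothesis $k\le l$ enters only through the sign of $k-l$.

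The statement on slopes is immediate: writing $n=\mathrm{rk}\,E$ and $m=\mathrm{rk}\,F$, we have $\mathrm{rk}(E/F)=n-m$ and, since $\mu(F)=\mu(E)=\mu$, also $\deg(E/F)=\deg E-\deg F=(n-m)\mu$, so $\mu(E/F)=\mu$. The cases $F=0$ and $F=E$ being trivial, I assume $0<m<n$ and set $Q:=E/F$, $q:=\mathrm{rk}\,Q=n-m$, and $\pi\colon E\to Q$ the projection.

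Next I would take any proper nonzero subbundle $\bar G\subset Q$, with $\bar r=\mathrm{rk}\,\bar G$ and $\bar e=\deg\bar G$, and put $\tilde G:=\pi^{-1}(\bar G)$. Since $E/\tilde G\cong Q/\bar G$ is locally free, $\tilde G$ is a subbundle of $E$, fitting into $0\to F\to\tilde G\to\bar G\to 0$; thus $\mathrm{rk}\,\tilde G=m+\bar r$ and $\deg\tilde G=m\mu+\bar e$, and $\tilde G$ is a proper nonzero subbundle of $E$ ($\tilde G\neq E$ because $\bar G\neq Q$, and $\tilde G\neq 0$ because $\bar G\neq 0$). Applying the $(k,l)$-stability of $E$ to $\tilde G$, i.e. $\mu_k(\tilde G)<\mu_{k-l}(E)$, and clearing denominators gives
$$
n(\bar e+k-\bar r\mu)<(m+\bar r)(k-l).
$$
The inequality to be proved, $\mu_k(\bar G)<\mu_{k-l}(Q)$, likewise reads
$$
q(\bar e+k-\bar r\mu)<\bar r(k-l).
$$
Multiplying the first inequality by $q/n>0$ and using the identity $\tfrac{q(m+\bar r)}{n}=\bar r+\tfrac{m(q-\bar r)}{n}$, one obtains
$$
q(\bar e+k-\bar r\mu)<\bar r(k-l)+(k-l)\,\frac{m(q-\bar r)}{n}.
$$
Here $0<\bar r<q$ because $\bar G$ is proper, so $m(q-\bar r)/n>0$, while $k\le l$ forces $k-l\le 0$; hence the last term is $\le 0$ and the desired inequality follows, completing the verification that $E/F$ is $(k,l)$-stable.

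I expect no serious obstacle: the only points needing care are checking that $\pi^{-1}(\bar G)$ is genuinely a saturated subbundle of $E$ that is proper and nonzero, so that $(k,l)$-stability of $E$ may legitimately be invoked, and respecting the sign of $k-l$ when multiplying inequalities. The latter is precisely where the hypothesis $k\le l$ is used, and it is essential: without it the correction term $(k-l)m(q-\bar r)/n$ would have the wrong sign and the conclusion would in general fail.
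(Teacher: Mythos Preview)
Your proof is correct and follows essentially the same idea as the paper's: lift a would-be destabilising subobject of $E/F$ to $E$ and invoke the $(k,l)$-stability of $E$. The paper phrases this dually, applying the quotient form of $(k,l)$-stability to $H=(E/F)/\bar G$ (which is also a proper quotient of $E$) and reducing by contradiction to the inequality $\tfrac{k-l}{n-m}>\tfrac{k-l}{n}$, whereas you work directly with the subbundle $\tilde G=\pi^{-1}(\bar G)$ and carry out the slope computation explicitly; the content is the same.
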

\begin{proof}
Suppose that $E/F$ is not a $(k,l)$-stable vector bundle. Then there exists
a subbundle $G\subset E/F$ such that
$$
0\to G\to E/F\to H\to 0
$$
and $\mu_{k}(G)\geq\mu_{k-l}(E/F)\geq \mu_{-l}(H)$.
This implies
$$
\mu(E/F)+\frac{k-l}{n-m}\geq \mu_{-l}(H)>\mu_{k-l}(E)=\mu(E)+\frac{k-l}{n}=\mu(E/F)+\frac{k-l}{n},
$$
where $m$ is the rank of $F$. But this holds if and only if  $k-l>0$, which gives a contradiction.
\end{proof}

\section{Moduli Stacks of $(k,l)$-stable vector bundles over an algebraic curve}

We will now consider the moduli stack $\bndkl$ of $(k, l)$-stable vector bundles of rank $n$ and
degree $d$ over the algebraic curve $X$ for {\it any} pair $(k, l)$ of integers. We will show that this moduli stack is an Artin algebraic stack, which is locally of finite type, reduced and irreducible.
As $(k,l)$-stability is an open condition, the moduli stack $\bndkl$ will in fact be an open substack of
the moduli stack $\mathscr{B}un_X(n,d)$ of all vector bundles of rank $n$ and degree $d$ over $X$ and will govern a good part of its geometry.

For any pair of integers $(k,l)$, and any scheme $T$, we define the groupoid of sections $\bndkl(T)$ as follows:
An object $E$ of $\bndkl(T)$ is a flat family of $(k, l)$-stable vector bundles of rank $n$ and degree $d$ over $X$ parametrised by $T$. The morphisms of $\bndkl(T)$ are the isomorphisms of these families.
Equivalently, $E$ is an object of $\bndkl(T)$, if $E$ is a vector bundle over $X\times T$ such that for each point $t\in T$ the restriction $E_{t}$ is a $(k,l)$-stable vector bundle of rank $n$ and degree $d$ over $X$.

Observe that $(k,l)$-stability is a property, which is stable under arbitrary base change,
i.e., if $f:T'\to T$ is a morphism of schemes and $E$ is an object of the groupoid $\bndkl(T)$, then $f^{*}E$ is an object of the groupoid $\bndkl(T')$.
Hence we get a lax $2$-functor or pseudo-functor, i.e. a prestack of the form
$$
\mathscr{B}un_X^{k,l}(n,d):(Sch/\Spec(\F))^{op}\to \mathfrak{}{Gpds}
$$
from the category of schemes over $\Spec(\F)$ to the $2$-category of groupoids, which associates to each scheme $T$ the groupoid $\bndkl(T)$ and to each morphism of schemes $f: T'\rightarrow T$ the functor $f^*: \bndkl(T')\rightarrow \bndkl(T)$ induced by the pullback operation on vector bundles. In addition, we have a natural isomorphism between the pullback functors, i.e., for each two composable morphisms $T''\stackrel{g}\rightarrow T'\stackrel{f}\rightarrow T$ we have a natural isomorphism between the functors $\epsilon_{f, g}: g^*\circ f^*\cong (f\circ g)^*.$

It follows that the necessary descent conditions hold with respect to the \'etale topology on $Sch/\Spec(\F)$ and therefore $\bndkl$ is a stack (see \cite[Exposé VIII, Thm 1.1, Prop. 1.10]{Gro}). In fact it is an Artin algebraic stack, which is an open substack of the moduli stack
$\mathscr{B}un_X(n,d)$ of all rank $n$ and degree $d$ vector bundles over $X$ as our main theorem shows:

\begin{thm}\label{thm-stack}
The moduli stack $\bndkl$ of $(k,l)$-stable vector bundles of rank $n$ and degree $d$
over an algebraic curve $X$ is a smooth Artin algebraic stack, which is locally of finite type. Moreover, the forgetful morphism
$\theta^{k,l}:\bndkl \to \mathscr{B}un_X(n,d)$ is a representable open embedding.
\end{thm}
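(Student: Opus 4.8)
The plan is to deduce everything from the corresponding facts about the moduli stack $\mathscr{B}un_X(n,d)$ of all vector bundles together with the openness of $(k,l)$-stability. First I would recall that $\mathscr{B}un_X(n,d)$ is known to be a smooth Artin algebraic stack, locally of finite type over $\Spec(\F)$: this is standard (for instance via the presentation by Quot-schemes, or via Artin's representability criterion applied to the deformation theory of vector bundles on a curve, where obstructions live in $H^2(X,-)=0$ so the stack is smooth). The key structural point to establish is that the natural morphism $\theta^{k,l}\colon \bndkl\to \mathscr{B}un_X(n,d)$, induced by forgetting the $(k,l)$-stability condition, is a representable open immersion. Once this is in place, $\bndkl$ inherits being a smooth Artin stack locally of finite type, since all these properties are stable under open immersion.

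To see that $\theta^{k,l}$ is representable and an open immersion, I would argue as follows. The morphism is a monomorphism: an object of $\bndkl(T)$ is a vector bundle $E$ on $X\times T$ with each fibre $(k,l)$-stable, which is the same datum as an object $E$ of $\mathscr{B}un_X(n,d)(T)$ satisfying a property of its fibres; hence $\theta^{k,l}$ is fully faithful on each groupoid $\bndkl(T)$ and therefore a monomorphism of stacks. For representability by open immersions it suffices to check, for every scheme $T$ and every morphism $T\to \mathscr{B}un_X(n,d)$ corresponding to a family $E$ on $X\times T$, that the fibre product $T\times_{\mathscr{B}un_X(n,d)}\bndkl$ is represented by an open subscheme of $T$. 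But this fibre product is exactly the locus of points $t\in T$ for which $E_t$ is $(k,l)$-stable, and by \cite[Proposition 5.3]{NR-Geo} — quoted as part (1) of the Narasimhan--Ramanan Proposition above — $(k,l)$-stability is an open condition in flat families, so this locus is open in $T$. Thus the fibre product is the open subscheme of $T$ cut out by that locus, which shows $\theta^{k,l}$ is representable and an open immersion; in particular $\bndkl$ is an open substack of $\mathscr{B}un_X(n,d)$.

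It remains only to observe that $\bndkl$ is itself an Artin algebraic stack locally of finite type and smooth. Since $\mathscr{B}un_X(n,d)$ has a smooth atlas $U\to \mathscr{B}un_X(n,d)$ from a scheme $U$ locally of finite type, pulling back along the open immersion $\theta^{k,l}$ gives an open subscheme $U^{k,l}\subset U$, again locally of finite type, together with a smooth surjection $U^{k,l}\to \bndkl$; the diagonal of $\bndkl$ is the base change of the (representable, quasi-compact, separated) diagonal of $\mathscr{B}un_X(n,d)$ along $\bndkl\times\bndkl\to\mathscr{B}un_X(n,d)\times\mathscr{B}un_X(n,d)$, hence still representable. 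Smoothness of $\bndkl$ follows since it is an open substack of the smooth stack $\mathscr{B}un_X(n,d)$, or equivalently because $U^{k,l}$ is open in the smooth scheme $U$.

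The main obstacle is not any single hard computation but rather being careful about the two things one has to quote as black boxes: the algebraicity and smoothness of the full stack $\mathscr{B}un_X(n,d)$ (which one should reference properly, e.g. to standard treatments of moduli stacks of bundles on curves), and the precise formulation of openness of $(k,l)$-stability in families from \cite{NR-Geo}, which is exactly what converts the fibrewise condition into an open substack. Everything else is formal stack yoga about open immersions and base change.
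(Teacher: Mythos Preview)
Your proposal is correct and follows essentially the same approach as the paper: the paper factors the argument through a general statement (Theorem~\ref{prop-gen} and Theorem~\ref{thm-stackp}) valid for \emph{any} open property $\mathfrak{P}$ of vector bundles and then specializes to $(k,l)$-stability, but the actual content---identifying the fibre product over a test scheme $T$ with the open locus $T^{\mathfrak{P}}\subset T$, pulling back an atlas of $\mathscr{B}un_X(n,d)$, and handling the diagonal by base change---is exactly what you wrote. The only cosmetic difference is that the paper checks representability of the diagonal directly via the $\mathrm{Isom}$-scheme (Lemma~\ref{lemA}) rather than by base-changing the diagonal of the ambient stack, but both arguments are standard and equivalent.
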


This theorem will be a consequence of a more general result stated below.
For this we will need the following definition:

\begin{defn}\label{open}
A property $\mathfrak{P}$ of vector bundles over $X$ is an \textit{open property},
if for any family of vector bundles over $X$ parameterised by a scheme $T$
the set $T^{\mathfrak{P}}:=\{t\in T | \, E_{X\times\{t\}} \text{\ has property $\mathfrak{P}$}\}$ is a Zariski open subset of $T$.
\end{defn}

Given an open property $\mathfrak{P}$ of vector bundles over $X$ we can define again a prestack of the form
$$
\bp: (Sch/\Spec(\F))^{op}\to \mathfrak{}{Gpds}
$$
from the category of schemes over $\Spec(\F)$ to the $2$-category of groupoids, which associates to each scheme $T$ the groupoid $\bp(T)$ of families of vector bundles of rank $n$ and degree $d$ over $X$ having property $\mathfrak{P}$ and to each morphism of schemes $f: T'\rightarrow T$ the functor $f^*: \bp(T')\rightarrow \bp(T)$ induced via pullbacks.

The following fundamental theorem shows that this prestack $\bp$ is in fact an algebraic stack, the {\it moduli stack of vector bundles of rank $n$ and degree $d$ over $X$ having property} $\mathfrak{P}$.

\begin{thm}\label{prop-gen}
Let $\mathfrak{P}$ be an open property of vector bundles of rank $n$ and degree $d$ over an algebraic curve $X$.
Then the following holds:
  \begin{enumerate}
    \item The prestack $\bp$  defined by $\mathfrak{P}$ is a substack
          of the moduli stack $\mathscr{B}un_X(n,d)$.
    \item The forgetful morphism f: $\bp\to \mathscr{B}un_X(n,d)$ is representable by schemes.
    \item The moduli stack $\bp$ is an open algebraic substack of the moduli stack $\bnd$.
    \end{enumerate}
\end{thm}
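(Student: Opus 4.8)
The plan is to deduce everything from the corresponding well-known facts about the ambient stack $\mathscr{B}un_X(n,d)$, which is known to be a smooth Artin algebraic stack, locally of finite type over $\Spec(\F)$, together with the defining feature of an \emph{open property} in Definition \ref{open}. First I would observe that since $\mathfrak{P}$ is stable under arbitrary base change (any open property is, because the formation of the open locus $T^{\mathfrak{P}}$ commutes with pullback of families along $f\colon T'\to T$, as the fibre of $f^{*}E$ over $t'$ is $E_{f(t')}$), the assignment $T\mapsto \bp(T)$ really is a pseudofunctor and the pullback isomorphisms $\epsilon_{f,g}$ are inherited from those on $\bnd$. For (1), I would check the two stack axioms: morphisms glue because $\bp(T)$ is a \emph{full} subgroupoid of $\mathscr{B}un_X(n,d)(T)$ (same objects-with-property, same isomorphisms), so descent for morphisms is immediate from descent in $\bnd$; and objects glue because given an \'etale cover $\{T_i\to T\}$ and descent data for families with property $\mathfrak{P}$, the glued family $E$ exists in $\bnd(T)$ by descent there, and then $T^{\mathfrak{P}}=T$ since property $\mathfrak{P}$ can be checked \'etale-locally — $T_i\to T$ being \'etale and surjective, $T_i = T_i^{\mathfrak{P}} = T_i\times_T T^{\mathfrak{P}}$ forces $T^{\mathfrak{P}}=T$. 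Hence $\bp$ is a substack of $\bnd$.

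For (2), the key point is that for any scheme $U$ and any morphism $u\colon U\to \bnd$ — i.e. a family $E_U$ of rank $n$, degree $d$ bundles over $X\times U$ — the fibre product $\bp\times_{\bnd} U$ is represented by the open subscheme $U^{\mathfrak{P}}\subset U$ furnished by Definition \ref{open}. Indeed, a $T$-point of the fibre product is a morphism $T\to U$ together with a lift of the composite $T\to\bnd$ to $\bp$, and such a lift exists (uniquely, since $\bp\hookrightarrow\bnd$ is a monomorphism of stacks) precisely when the pulled-back family has property $\mathfrak{P}$ everywhere, i.e. precisely when $T\to U$ factors through $U^{\mathfrak{P}}$. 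This is exactly the universal property of the open immersion $U^{\mathfrak{P}}\hookrightarrow U$. Therefore $f\colon\bp\to\bnd$ is representable by schemes, and moreover every base change of $f$ to a scheme is an open immersion.

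Statement (3) is then a formal consequence: a morphism of algebraic stacks is an open immersion if and only if it is representable and, after base change along a smooth atlas, becomes an open immersion of schemes. Concretely, pick a smooth surjection $\mathrm{Spec}(A)\to\bnd$ (or any smooth atlas $U\to\bnd$); by (2) the base change $\bp\times_{\bnd}U \to U$ is the open immersion $U^{\mathfrak{P}}\hookrightarrow U$, so $\bp\times_{\bnd}U$ is a scheme, and composing $U^{\mathfrak{P}}\hookrightarrow U\to\bnd$ with the fact that $U^{\mathfrak{P}}\to\bp$ is smooth and surjective (being the base change of the atlas $U\to\bnd$ along the monomorphism $\bp\hookrightarrow\bnd$) exhibits $\bp$ as an algebraic stack with $U^{\mathfrak{P}}$ as atlas; openness and the local-finite-type property are inherited from $U\to\bnd$. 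Since $\bnd$ is smooth and locally of finite type, so is the open substack $\bp$. I expect the only genuinely delicate point to be the careful bookkeeping in (1)–(2) that an open property is checkable \'etale-locally and compatible with base change; once that is pinned down, representability and openness follow by the standard fibre-product argument above, and I would cite the general descent results (e.g. \cite{Gro}) for the routine verifications rather than spelling them out.

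Applying Theorem \ref{prop-gen} to the open property ``$(k,l)$-stable'' — which is open by part (1) of the Narasimhan--Ramanan proposition quoted above — yields Theorem \ref{thm-stack}; smoothness and reducedness/irreducibility of $\bndkl$ then follow from those of $\bnd$ together with the fact that an open substack of a smooth (resp.\ reduced, resp.\ irreducible) stack has the same property, using nonemptiness from Theorem \ref{prop-nonempti} where irreducibility is concerned.
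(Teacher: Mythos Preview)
Your proposal is correct and follows essentially the same approach as the paper: both identify the fibre product $\bp\times_{\bnd}Y$ with the open subscheme $Y^{\mathfrak{P}}$ and then pull an atlas of $\bnd$ back to obtain an atlas for $\bp$. The only stylistic difference is that you invoke the fact that $\bp\hookrightarrow\bnd$ is a monomorphism (being fully faithful) to see that $T$-points of the fibre product are just morphisms $T\to Y^{\mathfrak{P}}$, whereas the paper writes out the groupoid of sections $(\bp\times_{\bnd}Y)(T)$ explicitly and checks by hand that its objects have no nontrivial automorphisms before concluding the same identification.
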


\begin{proof}
(1) This follows again from the descent properties \cite[Exposé VIII, Théor\`eme 1.1, Proposition 1.10]{Gro}.

(2) Consider a scheme $Y$ and a morphism of stacks $g:{Y}\to \mathscr{B}un_X(n,d)$.
By the $2$-Yoneda lemma, $g$ corresponds to a family $E\to X\times Y$ and we have the following $2$-cartesian diagram

\begin{equation}\label{fibpro}
  \xymatrix@1{{Y}\times_{\mathscr{B}un_X(n,d)}\bp\ar[r]^{\hspace{1cm}p_1}\ar[d]_{p_2}&{Y}\ar[d]^{g}\\
  \bp\ar[r]_{f}&\mathscr{B}un_X(n,d).}
\end{equation}

We denote by $Y^{\mathfrak{P}}$ the Zariski open subset in $Y$ defined as
$$
Y^{\mathfrak{P}}:=\{y\in Y |\ E_{X\times\{y\}} \mbox{ has property $\mathfrak{P}$} \}.
$$
Now for any scheme $T$ over $\Spec(\F)$, the groupoid $(\bp\times_{\bnd} {Y})(T)$ is defined as follows:
\begin{enumerate}
   \item[i)] Elements of $Obj(\bp\times_{\bnd} {Y})(T)$, are triples $(\beta,F,\psi)$ such that
         $\beta:T\to Y$ is a morphism of schemes,
         $F\to X\times T$ is a family of  vector bundles of rank $n$ and degree $d$ over $X$ having property $\mathfrak{P}$ and
         $\psi:F\to(id_{X}\times\beta)^{*}E$ is an isomorphism of vector bundles.
         Observe that the existence of $\psi$ implies that $\beta$ factorizes through $Y^{\mathfrak{P}}$, i.e.,
         $\beta: T\to Y^{\mathfrak{P}}\hookrightarrow Y$.
   \item[ii)] Elements of $Mor (\bp\times_{\bnd} {Y})(T)$ are by definition given as pairs
         $(\alpha, \alpha'):(\beta,F,\psi)\to (\beta',F',\psi')$ such that
         $\alpha:\beta\to \beta'$ and $\alpha':F\to F'$ are morphisms in
         ${Y}(T)$ and $\bnd(T)$ respectively and such that the following diagram commutes:
\end{enumerate}

\begin{equation}
\xymatrix@1{f(F)\ar[r]^{f(\alpha')}\ar[d]_{\psi}&f(F')\ar[d]^{\psi'}\\
g(\beta)\ar[r]_{g(\alpha)}&g(\beta').}
\end{equation}

However, $\alpha$ is the identity map, hence $\beta=\beta'$, $g(\alpha)=id_{g(\beta)}$ and
$g(\beta')=g(\beta)=(id_{X}\times \beta)^{*}E$.
Moreover, $f(\alpha')=\alpha'=(\psi')^{-1}\circ\psi.$
Therefore, the morphisms are $(id_{\beta},(\psi')^{-1}\circ\psi):(\beta,F,\psi)\to (\beta,F',\psi')$
and this implies that the objects in the groupoid $(\bp\times_{\bnd} {Y})(T)$
do not have non-trivial automorphisms.

(3) The stack ${Y}^{\mathfrak{P}}$ is isomorphic
to the fiber product of stacks ${Y}\times_{\mathscr{B}un_X(n,d)}\bp$. Hence by (2) the claim follows.
\end{proof}

Now similar as in the proof of the algebraicity for the moduli stack $\bnd$ of vector bundles of rank $n$
and degree $d$ over $X$ (see for example \cite[Theorem 2.67]{Fra}, \cite[Proposition]{Tom}), we get the following:

\begin{lem}\label{lemA}
The diagonal $\Delta$ of $\bp$ is representable by a scheme,
quasi-compact and separated.
 \end{lem}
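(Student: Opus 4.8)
The plan is to reduce the statement for $\bp$ to the already-known algebraicity of the moduli stack $\bnd$ of all vector bundles, exploiting the open embedding established in Theorem \ref{prop-gen}(3). Recall that $\bnd$ is known to be an Artin algebraic stack, locally of finite type, whose diagonal is representable, quasi-compact and separated; this is the standard result cited from \cite{Fra}, \cite{Tom}. The key point is that the diagonal of an open substack is obtained by base change from the diagonal of the ambient stack, so the desired properties descend automatically.

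Concretely, first I would recall the $2$-cartesian square expressing the diagonal of $\bp$ in terms of the diagonal of $\bnd$. If $j:\bp\hookrightarrow\bnd$ is the open embedding of Theorem \ref{prop-gen}, then $j$ is a monomorphism, so the diagonal $\Delta_{\bp}:\bp\to\bp\times\bp$ fits into a cartesian diagram
\begin{equation}
\xymatrix@1{\bp\ar[r]^{\hspace{-0.6cm}\Delta_{\bp}}\ar[d]_{j}&\bp\times\bp\ar[d]^{j\times j}\\
\bnd\ar[r]_{\hspace{-0.6cm}\Delta_{\bnd}}&\bnd\times\bnd.}
\end{equation}
Indeed, since $j$ is an open immersion, $j\times j$ is an open immersion as well, and pulling back $\Delta_{\bnd}$ along $j\times j$ recovers $\Delta_{\bp}$ because for any scheme $T$ a pair of objects of $\bp(T)$ together with an isomorphism of their images in $\bnd(T)$ is the same as an isomorphism in $\bp(T)$ (fullness of $j$).

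From this square the conclusions are immediate. Representability by schemes: for any scheme $Y$ with a morphism $Y\to\bp\times\bp$, the fibre product $Y\times_{\bp\times\bp}\bp$ equals $Y\times_{\bnd\times\bnd}\bnd$ computed through $Y\to\bp\times\bp\xrightarrow{j\times j}\bnd\times\bnd$, and the latter is a scheme by algebraicity of $\bnd$. Quasi-compactness and separatedness of $\Delta_{\bp}$ are preserved under arbitrary base change, hence inherited from $\Delta_{\bnd}$. Alternatively, and perhaps more transparently, one notes that $\Delta_{\bp}$ factors as $\bp\xrightarrow{\Delta_{\bp}}\bp\times\bp$ and is simply the restriction of $\Delta_{\bnd}$ over the open substack $\bp\times\bp\subset\bnd\times\bnd$; restrictions of representable, quasi-compact, separated morphisms to open substacks retain these properties.

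The only genuine subtlety — and the step I would be most careful about — is verifying that the square above is truly cartesian, i.e. that no additional automorphisms or objects appear when one passes to the fibre product; but this was in essence already checked in the proof of Theorem \ref{prop-gen}(2), where it is shown that objects of $\bp\times_{\bnd}Y$ have no non-trivial automorphisms and that the map factors through the open locus $Y^{\mathfrak P}$. Applying that analysis with $Y=\bp\times\bp$ (more precisely, testing against arbitrary schemes mapping to $\bp\times\bp$) gives exactly the cartesian square needed. Everything else is a formal consequence of the stability of the relevant properties of morphisms under base change and of the already-established algebraicity of $\bnd$.
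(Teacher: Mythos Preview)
Your proof is correct but follows a different route from the paper's. The paper argues directly: given families $E\to X\times T$ and $E'\to X\times T'$, the fibre of $\Delta$ over $(E,E'):T\times T'\to\bp\times\bp$ is the Isom sheaf $Isom(T\times T', pr_1^*E, pr_2^*E')$, which is an open subscheme of the affine bundle $Hom(pr_1^*E, pr_2^*E')\to T\times T'$; this is exactly the same argument one gives for $\bnd$ itself. You instead observe that since $j:\bp\hookrightarrow\bnd$ is an open immersion (hence a monomorphism, hence fully faithful), $\Delta_{\bp}$ is the base change of $\Delta_{\bnd}$ along $j\times j$, and the three properties in question descend under base change from the already-established case of $\bnd$. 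Your route is more conceptual and avoids repeating the Isom-scheme computation verbatim; the paper's route is self-contained and does not lean on the logical ordering with Theorem~\ref{prop-gen}. Both are standard. The one caution with your version is to make sure you invoke only the \emph{open immersion} part of Theorem~\ref{prop-gen} and not the word ``algebraic'' appearing in its statement~(3), since in the paper's internal logic the full algebraicity of $\bp$ is only completed after Lemma~\ref{lemA} and Theorem~\ref{thm-stackp}; you do handle this correctly.
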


\begin{proof}
 Let $T$ and $T'$ be two schemes and let $E\to X\times T$, $E'\to X\times T'$ be
 two families of vector bundles. Then we have the following $2$-cartesian diagram:

\begin{equation}\label{clos-subcat}
\xymatrix@1{Isom(T\times T', pr_{1}E, pr_2 E')\ar[r]\ar[d]_{h}&\bp\ar[d]^{\Delta}\\
T\times T'\ar[r]_{\hspace*{-1.3cm}(E,E')}&\bp\times\bp .}
\end{equation}

And it follows that the sheaf $Isom(T\times T', pr_{1}E,pr_2 E')$ is a subscheme
of the fiber bundle $Hom(pr_{1}E, pr_2 E')$ on $T\times T'$.
Moreover, the morphism
$$h:Hom(pr_{1}E, pr_2 E')\to T\times T'$$
is affine and therefore the result follows.
\end{proof}

Finally from this we now get the desired result:

\begin{thm}\label{thm-stackp}
The moduli stack $\bp$ is a smooth Artin algebraic stack, which is locally of finite type.
\end{thm}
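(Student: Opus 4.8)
The plan is to deduce everything from the known algebraicity of the ambient stack $\bnd$ of all vector bundles of rank $n$ and degree $d$ over $X$, which is a smooth Artin algebraic stack, locally of finite type (see \cite{Fra}, \cite{Tom}). First I would recall from Theorem \ref{prop-gen} that $\bp$ is a substack of $\bnd$, that the forgetful morphism $f\colon \bp\to\bnd$ is representable by schemes, and that it realises $\bp$ as an open substack of $\bnd$. Combined with Lemma \ref{lemA}, which already gives that the diagonal of $\bp$ is representable by a scheme, quasi-compact and separated, the only remaining point needed to conclude that $\bp$ is an Artin algebraic stack is the existence of a smooth surjective atlas.

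For the atlas I would choose a smooth surjective morphism $u\colon U\to\bnd$ from a scheme $U$, which exists since $\bnd$ is algebraic, and form the fibre product $U^{\mathfrak{P}}:=U\times_{\bnd}\bp$. By Theorem \ref{prop-gen}(2) the projection $U^{\mathfrak{P}}\to U$ is representable by schemes, and by the openness of $\mathfrak{P}$ it is precisely the inclusion of the Zariski open subscheme $\{u\in U \mid E_{X\times\{u\}}\ \text{has property}\ \mathfrak{P}\}$ of $U$; hence $U^{\mathfrak{P}}$ is a scheme. The induced morphism $U^{\mathfrak{P}}\to\bp$ is then the base change of $u$ along the open immersion $\bp\hookrightarrow\bnd$, so it is smooth, and it is surjective because every $\mathfrak{P}$-bundle is in particular a vector bundle and is therefore in the image of $u$. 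This exhibits $\bp$ as a smooth Artin algebraic stack.

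Finally, smoothness and being locally of finite type are inherited through the atlas: the scheme $U^{\mathfrak{P}}$ is an open subscheme of $U$, and $U$ is smooth and locally of finite type over $\Spec(\F)$ since $u$ is a smooth atlas of the smooth, locally-of-finite-type stack $\bnd$; an open subscheme of such a scheme retains both properties, and a stack admitting an atlas that is smooth (resp. locally of finite type) over the base is itself smooth (resp. locally of finite type). Equivalently, one may simply note that an open substack of a smooth, locally-of-finite-type algebraic stack is again of this form, which applies directly by Theorem \ref{prop-gen}(3).

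I expect the only mildly delicate point to be bookkeeping: using the representability in Theorem \ref{prop-gen}(2) to identify $U\times_{\bnd}\bp$ with a genuine open subscheme of $U$ rather than merely an algebraic space, and checking that the chosen atlas of $\bnd$ restricts to an honest atlas of $\bp$. No new geometric input is required beyond what has already been established.
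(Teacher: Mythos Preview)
Your proposal is correct and follows essentially the same approach as the paper: both take an atlas $U\to\bnd$, form $U^{\mathfrak{P}}:=U\times_{\bnd}\bp$, identify it as an open subscheme of $U$ via Theorem~\ref{prop-gen}, invoke Lemma~\ref{lemA} for the diagonal, and deduce smoothness and local finite type of $U^{\mathfrak{P}}\to\bp$ by base change from the atlas of $\bnd$. The paper verifies this last step by testing against an arbitrary scheme $T\to\bp$ via a commutative diagram, whereas you phrase it more directly as base change along the open immersion $\bp\hookrightarrow\bnd$; the content is the same.
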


\begin{proof}

Consider an atlas ${U}$ of $\bnd$ and a smooth surjective morphism ${U}\to \bnd$.
By (3) of Theorem \ref{prop-gen}, the $2$-fiber product ${U} \times_{\bnd} \bp$ is representable by a scheme.
Now we will prove that ${U}^{\mathfrak{P}}:={U}\times_{\bnd}\bp$ is an atlas and
${U}^{\mathfrak{P}} \to \bp$ is representable, smooth and locally of finite type.
For this, we consider a scheme $T$, a morphism $h:T\to \bp$ and the following diagram:
$$
\xymatrix@1{&{U}^{\mathfrak{P}} \ar[dd]\ar[rrr]&&&{U}\ar[dd]\\
{T}\times_{\bp}{U}^{\mathfrak{P}}\ar[dd]\ar[ru]\ar[rrrru] &&&&\\
&\bp\ar[rrr]^{f}&&&\bnd\\
{T}\ar[ru]^{h}\ar[rrrru]_{f\circ h}&&&\\
}
$$

Hence ${T}\times_{\bp}{U^{\mathfrak{P}}}\to{T}$ is smooth and locally of finite type
because the atlas ${U}\to \bnd$ is. This implies that ${U}^{\mathfrak{P}} \to \bp$
is representable, smooth and locally of finite type.
The quasi-separedness of the diagonal $\Delta$ of $\bp$ is a consequence of Lemma \ref{lemA}.
\end{proof}

\begin{proof}[Proof of Theorem \ref{thm-stack}]
Observe that the proof is simply a direct consequence of Theorem \ref{thm-stackp}
and (2) in Theorem \ref{prop-gen} as $(k, l)$-stability is an open property.
\end{proof}

The above considerations also imply immediately the following:

\begin{cor}
For any pair $(k, l)$ of integers, the moduli stacks $\mathscr{B}un^{k-1,l}_X(n,d)$ and $\mathscr{B}un^{k,l-1}_X(n,d)$
are open substacks of the moduli stack $\mathscr{B}un^{k,l}_X(n,d)$.
\end{cor}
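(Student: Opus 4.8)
For any pair $(k,l)$ of integers, the moduli stacks $\mathscr{B}un^{k-1,l}_X(n,d)$ and $\mathscr{B}un^{k,l-1}_X(n,d)$ are open substacks of $\mathscr{B}un^{k,l}_X(n,d)$.

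The plan is to reduce everything to two ingredients already established in the excerpt: property (2) of the Narasimhan--Ramanan Proposition (if $E$ is $(k,l)$-stable, then $E$ is $(k,l-1)$-stable and $(k-1,l)$-stable), and Theorem \ref{thm-stack}, which asserts that each $\mathscr{B}un^{k',l'}_X(n,d)$ sits inside $\mathscr{B}un_X(n,d)$ via a representable open embedding $\theta^{k',l'}$. First I would observe that property (2) of the quoted Proposition, applied fibrewise to a flat family $E \to X \times T$, shows that every object of $\mathscr{B}un^{k-1,l}_X(n,d)(T)$ is also an object of $\mathscr{B}un^{k,l}_X(n,d)(T)$, and likewise for $\mathscr{B}un^{k,l-1}_X(n,d)(T)$; the morphisms (isomorphisms of families) are the same in both groupoids, so on the level of the underlying prestacks we get fully faithful inclusions of groupoid-valued functors, hence morphisms of stacks $\iota^{k-1,l}: \mathscr{B}un^{k-1,l}_X(n,d) \to \mathscr{B}un^{k,l}_X(n,d)$ and $\iota^{k,l-1}: \mathscr{B}un^{k,l-1}_X(n,d) \to \mathscr{B}un^{k,l}_X(n,d)$ that are compatible with the forgetful morphisms to $\mathscr{B}un_X(n,d)$, i.e. $\theta^{k,l}\circ\iota^{k-1,l} = \theta^{k-1,l}$ and similarly for the other one.

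Next I would promote these inclusions to open embeddings. Since $\theta^{k,l}$, $\theta^{k-1,l}$ and $\theta^{k,l-1}$ are all representable open embeddings by Theorem \ref{thm-stack}, openness and representability can be checked after the base change along any atlas. Concretely, pick an atlas $U \to \mathscr{B}un_X(n,d)$ with $E\to X\times U$ the corresponding family. By the proof of Theorem \ref{prop-gen}(3), $U^{k,l} := U \times_{\mathscr{B}un_X(n,d)} \mathscr{B}un^{k,l}_X(n,d)$ is the Zariski open subset $\{u \in U \mid E_{X\times\{u\}} \text{ is } (k,l)\text{-stable}\}$, and likewise $U^{k-1,l}$, $U^{k,l-1}$ are the corresponding open loci. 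Property (2) of the quoted Proposition gives set-theoretic inclusions $U^{k-1,l} \subset U^{k,l}$ and $U^{k,l-1} \subset U^{k,l}$ of open subschemes of $U$; since $U^{k-1,l}$ and $U^{k,l-1}$ are themselves open in $U$, they are a fortiori open in the open subscheme $U^{k,l}$. As this holds for every atlas and is stable under further base change, $\iota^{k-1,l}$ and $\iota^{k,l-1}$ are representable open immersions, which is precisely the assertion that $\mathscr{B}un^{k-1,l}_X(n,d)$ and $\mathscr{B}un^{k,l-1}_X(n,d)$ are open substacks of $\mathscr{B}un^{k,l}_X(n,d)$.

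I do not expect a genuine obstacle here: the content of the corollary is entirely formal once one has Theorem \ref{thm-stack} and the fibrewise implication of property (2). The only point requiring a little care is the bookkeeping that makes the inclusion of prestacks into a morphism of stacks — one must check it is compatible with pullback $2$-isomorphisms $\epsilon_{f,g}$ — but this is immediate because the two stacks are both full substacks of $\mathscr{B}un_X(n,d)$ cut out by open properties, so all the coherence data is inherited verbatim from $\mathscr{B}un_X(n,d)$. One could alternatively phrase the whole argument as: an open substack of an open substack is an open substack, using that the loci $U^{k-1,l}, U^{k,l-1}$ are contained in $U^{k,l}$ inside each atlas $U$.
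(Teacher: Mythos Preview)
Your overall strategy---reducing to property (2) of the Narasimhan--Ramanan Proposition together with Theorem \ref{thm-stack}, then checking on an atlas that an open inside an open is open---is exactly the paper's approach; its ``proof'' is the single sentence ``The above considerations also imply immediately the following.''

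However, you have applied property (2) in the wrong direction. You correctly quote it as ``if $E$ is $(k,l)$-stable, then $E$ is $(k,l-1)$-stable and $(k-1,l)$-stable,'' but then assert that this shows every object of $\mathscr{B}un^{k-1,l}_X(n,d)(T)$ lies in $\mathscr{B}un^{k,l}_X(n,d)(T)$. It gives the reverse inclusion: by Remark \ref{kl-vs-st}, $(k,l)$-stability is the condition $s_m(E) > k(n-m) + ml$ for all $m$, and lowering $k$ or $l$ only weakens this inequality. So on any atlas $U$ one has $U^{k,l}\subset U^{k-1,l}$, not the other way, and your argument actually proves that $\mathscr{B}un^{k,l}_X(n,d)$ is an open substack of $\mathscr{B}un^{k-1,l}_X(n,d)$ and of $\mathscr{B}un^{k,l-1}_X(n,d)$. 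In fact the corollary as printed (and the filtrations displayed immediately after it, and the analogous ones for the sets $A_X^{k,l}(n,d)$) has its inclusions reversed; the correct orientation appears later in the paper, where the rank-two filtration is written $\cdots\supseteq \mathscr{B}un_X^{-1}(2,d)\supseteq \mathscr{B}un_X^{0}(2,d)\supseteq \mathscr{B}un_X^{1}(2,d)\supseteq\cdots$. The defect is therefore not in your method but in the direction of the fibrewise implication you invoked; once corrected, your proof establishes the intended statement by essentially the paper's route.
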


Thus, we get a filtration of the moduli stack $\mathscr{B}un_X(n,d)$ of all vector bundles over $X$ of rank $n$ and degree $d$ by open substacks in the following way:
$$
\cdots\subset \mathscr{B}un_X^{k-3,l}(n,d)\subset \mathscr{B}un_X^{k-2,l}(n,d)\subset \mathscr{B}un_X^{k-1,l}(n,d)\subset \mathscr{B}un_X^{k,l}(n,d)\subset\cdots
$$
$$
\cdots\subset \mathscr{B}un_X^{k,l-3}(n,d)\subset \mathscr{B}un_X^{k,l-2}(n,d)\subset \mathscr{B}un_X^{k,l-1}(n,d)\subset \mathscr{B}un_X^{k,l}(n,d)\subset\cdots
$$
We will now give also an explicit construction of an atlas for the moduli stack $\bndkl$, which
will be used later and is of interest in its own right.

The diagonal $\Delta$ of $\bndkl$ is quasi-compact by Lemma \ref{lemA}. Hence it is enough to prove that $\bndkl$ has a smooth atlas
in order to prove the smoothness of $\bndkl$. For this, consider the following explicit construction of an atlas.
For vector bundles of rank $n$ and degree $d$ consider the Hilbert polynomial $P_{n,d}(x):=nx+d+n(1-g)$ and denote by
$P(m)$ the number $P(m)=P_{n,d}(m)$ for a given integer $m$.

 Now let us consider the Quot scheme $Quot(\mathcal{O}^{P(m)}_X, P(x+m)).$

 For every integer $m$ we have an open subscheme $R^{k,l}_m$ given
 by the following conditions:
 \begin{enumerate}
 \item Every point in $R_{m}^{k,l}$ determines a quotient $(k,l)$-stable vector bundle of $\mathcal{O}^{P(m)}_X$.
 \item If $E$ is a family of quotients of $\mathcal{O}^{P(m)}_X$ parameterised by a scheme $T$,
        then $\mathcal{R}^{1}_{pr_{2}*}E=0$ and we have an
        isomorphism $\mathcal{O}^{P(m)}_X\cong \mathcal{R}^{0}_{pr_{2}*}E.$
 \end{enumerate}

With these conditions we see that the universal family $E^{univ}$
of  the Quot scheme determines a family parametrised by $R^{k,l}_{m}$ and therefore
a morphism
    $$
    r_{m}^{k,l}:R^{k,l}_{m}\to Quot(\mathcal{O}^{P(m)}_X, P(x+m)).
    $$

Take a point of $R_{m}^{k,l}$ represented by the exact sequence
$$0\to H\to \mathcal{O}_{X}^{P(m)}\to E\to 0$$
with $E$ a quotient $(k,l)$-stable vector bundle and $H$ the kernel.
Then we have that
$H^{1}(E\otimes H^{*})=0$, where $H^*$ is the dual vector bundle of $H$ and this implies that $r_m^{k, l}$ is smooth.
So we get that $r^{k,l}:=\coprod r^{k,l}_m$ is a smooth morphism.

Finally it follows from the previous constructions that $\bndkl$ is a smooth algebraic stack via a similar line of arguments as in \cite{Hei, Fra}.

\section{Gerbes and coarse moduli spaces of $(k, l)$-stable vector bundles.}

We can relate the moduli stacks of $(k, l)$-stable vector bundles $\bndkl$ and the moduli spaces $M_X^{k,l}(n,d)$, whenever the last ones exist. It turns out that they are actually coarse moduli spaces for the moduli stacks. This is very similar to the relation between moduli stacks and moduli spaces of stable bundles, which we will recall now in some details.
Let $\bs_X(n,d)$ be the moduli stack of stable vector bundles of rank $n$ and degree $d$ over $X$ and
$M_X^{s}(n,d)$ be the moduli space of stable vector bundles of rank $n$ and degree $d$ over $X$ as we discussed before.
By construction we have $\bs_X(n,d)=[R^{s}/GL_{N}]$ as a quotient stack and $M_X^{s}(n,d)=R^{s}//PGL_{N}$ as a GIT-quotient,
where $R^{s}$ is an open subscheme as defined in \cite{HL} (see also \cite{Tom}, \cite{Hei}).
There is also an asssociated morphism of stacks $\bs_X(n,d)\to M_X^{s}(n,d)$, such that all the fibers
are isomorphic to he classifying stack $\mathscr{B}\mathbb{G}_m$ of all line bundles. Here $\mathbb{G}_m$ is the multiplicative group over
$\Spec(\F)$, which in case we work over $\Spec(\C)$ is just $\C^*$. In fact more is true, the associated morphism is actually a gerbe (see \cite[Example 3.9]{Hei} and also \cite{LMB, Lieblich} for the general definition of a gerbe).

\begin{lem}\label{gerbe}
The morphism $\Phi: \bs_X(n,d)\to M_X^{s}(n,d)$ is a $\mathbb{G}_m$-gerbe.
\end{lem}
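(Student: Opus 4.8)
The plan is to show that $\Phi:\bs_X(n,d)\to M_X^{s}(n,d)$ is a gerbe banded by $\mathbb{G}_m$ by verifying the defining properties directly from the presentation $\bs_X(n,d)=[R^{s}/GL_N]$ and $M_X^{s}(n,d)=R^{s}//PGL_N$. Recall that a morphism of stacks is a $\mathbb{G}_m$-gerbe if it is locally surjective (every object of the target is, étale-locally, in the image), locally connected (any two objects in a fibre are, étale-locally, isomorphic), and the automorphism sheaf of any object of the source relative to the target is canonically identified with $\mathbb{G}_m$, compatibly with base change. I would carry out these three verifications in this order.

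First I would establish the automorphism claim, which is the conceptual heart: for a stable vector bundle $E$ of rank $n$ and degree $d$ over $X$, one has $\mathrm{Aut}(E)=\mathbb{G}_m$ since stable bundles are simple, and this identification is canonical and stable under base change (for a flat family over $T$ the relative automorphism sheaf is $\mathbb{G}_m$ by the same simplicity argument fibrewise together with the fact that $\mathcal{R}^0_{pr_{2*}}\mathcal{E}nd(E)\cong\mathcal{O}_T$ for a family of simple bundles). Since $M_X^{s}(n,d)$ is the coarse moduli space — equivalently, since the $GL_N$-action on $R^{s}$ has stabilisers exactly the scalars $\mathbb{G}_m\subset GL_N$ and factors through $PGL_N$ — the fibres of $\Phi$ over geometric points are precisely $\mathscr{B}\mathbb{G}_m$. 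This also gives local connectedness: two bundles mapping to the same point of $M_X^{s}(n,d)$ lie in the same $PGL_N$-orbit of $R^{s}$, hence are isomorphic (not merely étale-locally), so the fibres are connected.

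Next I would address local surjectivity. A point of $M_X^{s}(n,d)$ is, étale-locally on $M_X^{s}(n,d)$, represented by a point of $R^{s}$ because the GIT quotient map $R^{s}\to M_X^{s}(n,d)$ is a $PGL_N$-bundle (the action of $PGL_N$ on $R^{s}$ is free and the quotient is a geometric quotient in the sense that it is a principal bundle, which holds precisely on the stable locus), in particular smooth and surjective, hence admits sections étale-locally. Pulling back the tautological quotient sheaf along such a section produces a stable bundle over the étale neighbourhood restricting correctly, so $\Phi$ is an epimorphism of stacks. Combining these, $\Phi$ is a gerbe; the band is $\mathbb{G}_m$ by the canonical, base-change-compatible identification of the relative automorphism sheaf established in the first step, and since $\mathbb{G}_m$ is abelian there is no further cocycle ambiguity in the banding.

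The main obstacle I expect is not any single deep fact but the bookkeeping needed to make the automorphism identification genuinely canonical and functorial in families, rather than merely true pointwise — in particular, verifying that for an arbitrary flat family $\mathcal{E}\to X\times T$ of stable bundles the sheaf $\underline{\mathrm{Aut}}_T(\mathcal{E})$ is canonically $\mathbb{G}_{m,T}$ and that this is compatible with arbitrary base change $T'\to T$. This follows from cohomology-and-base-change applied to $pr_{2*}\mathcal{E}nd(\mathcal{E})$ once one knows $h^0(\mathcal{E}nd(E_t))=1$ for all $t$, which is exactly fibrewise simplicity of stable bundles; but it is the step that must be written carefully. Everything else reduces to the fact, already used implicitly in the excerpt's GIT construction, that $PGL_N$ acts freely on $R^{s}$ with $R^{s}\to M_X^{s}(n,d)$ a principal $PGL_N$-bundle. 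Alternatively, and perhaps more cleanly, one can simply invoke that $\bs_X(n,d)=[R^{s}/GL_N]$ with $GL_N$ acting through $PGL_N$ and stabiliser $\mathbb{G}_m$, and that such a quotient stack maps to the quotient scheme $R^{s}/PGL_N$ as a $\mathbb{G}_m$-gerbe by the standard description of $[R^{s}/GL_N]$ as a gerbe over $[R^{s}/PGL_N]$; I would present the argument in that streamlined form, citing \cite[Example 3.9]{Hei}.
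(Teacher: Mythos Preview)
Your proposal is correct and in fact considerably more detailed than what the paper does: the paper states this lemma without proof, simply referring the reader to \cite[Example 3.9]{Hei} (and to \cite{LMB, Lieblich} for the general notion of gerbe). Your final ``streamlined form''---observing that the central extension $\mathbb{G}_m\hookrightarrow GL_N\twoheadrightarrow PGL_N$ makes $[R^{s}/GL_N]\to[R^{s}/PGL_N]\cong M_X^{s}(n,d)$ a $\mathbb{G}_m$-gerbe, and then citing \cite[Example 3.9]{Hei}---is exactly the paper's approach; the preceding direct verification of the gerbe axioms is a welcome elaboration but not something the paper itself supplies.
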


In addition, we have that $M_X^{s}(n,d)$ is a coarse moduli space for the algebraic stack $\bs_X(n,d)$ (see \cite{Tom, Hei}) and for the convenience of the reader we will present a proof here in order to obtain a similar result for the moduli stack of $(k, l)$-stable vector bundles.

\begin{prop}\label{coarse}
Let  $\Phi:\bs_X(n,d)\to M_X^{s}(n,d)$ be as before.
Then $M_X^{s}(n,d)$ is a coarse moduli space for the moduli stack  $\bs_X(n,d)$.
\end{prop}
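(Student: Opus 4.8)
To show that $M_X^{s}(n,d)$ is a coarse moduli space for $\bs_X(n,d)$ we must verify the two defining properties of a coarse moduli space: first, that $\Phi$ is \emph{universal} among morphisms from $\bs_X(n,d)$ to algebraic spaces (schemes), and second, that $\Phi$ induces a bijection on geometric points, i.e. $\bs_X(n,d)(\Spec\overline{\F}) / {\cong} \xrightarrow{\sim} M_X^{s}(n,d)(\overline{\F})$. The second property is essentially immediate from the GIT construction recalled just above: since $M_X^{s}(n,d) = R^{s}//PGL_N$ is a geometric quotient on the stable locus, its geometric points are exactly the $PGL_N$-orbits of $R^{s}(\overline{\F})$, and these orbits are in canonical bijection with isomorphism classes of stable bundles, which is precisely the set of isomorphism classes of objects of $\bs_X(n,d)(\Spec\overline{\F})$. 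So the bijection-on-points statement reduces to unwinding the definitions of $R^{s}$ and the $GL_N$-action.

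**Main step (universality).** The heart of the proof is the universal property. Given any algebraic space $Z$ and a morphism of stacks $\psi: \bs_X(n,d) \to Z$, I must produce a unique morphism $\overline{\psi}: M_X^{s}(n,d) \to Z$ with $\overline{\psi}\circ\Phi = \psi$. The natural approach is to use the presentation $\bs_X(n,d) = [R^{s}/GL_N]$. Composing $\psi$ with the atlas $R^{s} \to \bs_X(n,d)$ gives a morphism $R^{s} \to Z$ which, by the definition of the quotient stack, is $GL_N$-invariant (indeed, since $Z$ is an algebraic space, any two arrows $R^{s}\times GL_N \rightrightarrows R^s \to Z$ differing by the action coincide, because $Z$ carries no nontrivial automorphisms). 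Since $\mathbb{G}_m \subset GL_N$ acts trivially on $R^s$ (it rescales the chosen bases but not the bundles), this invariant morphism actually factors through the $PGL_N$-action. Now invoke the key GIT fact, recalled from \cite{HL, MFK}: $M_X^{s}(n,d) = R^{s}//PGL_N$ is a \emph{categorical} (in fact geometric, even universal) quotient, so the $PGL_N$-invariant morphism $R^{s} \to Z$ descends uniquely to $M_X^{s}(n,d) \to Z$. Descent along the fppf (or smooth) cover $R^s \to M_X^s(n,d)$ — which holds because $R^s \to M_X^s(n,d)$ is a $PGL_N$-torsor, or because a good/geometric quotient is a uniform categorical quotient — then upgrades this to a morphism of stacks $M_X^{s}(n,d) \to Z$ factoring $\psi$; uniqueness follows from the uniqueness of the descended morphism together with the fact that $\Phi$ is an epimorphism of stacks.

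**The obstacle.** The step I expect to require the most care is checking that a morphism $\psi$ from the \emph{stack} $\bs_X(n,d)$ to an algebraic space $Z$ really does induce a genuinely $PGL_N$-invariant morphism on the atlas level, and conversely that an invariant morphism on $R^{s}$ descends back to a morphism \emph{of stacks} (not merely of the coarse points) — this is where one must use that $\bs_X(n,d) = [R^s/PGL_N]$ as stacks and that $R^s \to M_X^s(n,d)$ realizes $M_X^s(n,d)$ as a quotient in the category of algebraic spaces compatibly with base change. Equivalently, one may phrase the whole argument more cleanly: the gerbe morphism $\Phi$ of Lemma~\ref{gerbe} is a coarse space map because $\mathbb{G}_m$-gerbes are rigidifications whose rigidification yields $[R^s/PGL_N]$, which is a quotient stack with a geometric (hence coarse) quotient $M_X^s(n,d)$; a quotient stack $[Y/G]$ by a group acting with a geometric quotient $Y//G$ always has $Y//G$ as its coarse space, and this is the statement to cite or reprove. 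I would present the proof in the first, hands-on style since the excerpt says it is included "for the convenience of the reader," making the explicit $R^s$-level argument the appropriate level of detail, and then remark that the analogous argument, verbatim with $R^{k,l}$ in place of $R^s$, will give the coarse moduli space statement for $\bndkl$ whenever the inequalities (\ref{ec1}) and (\ref{ec2}) hold.
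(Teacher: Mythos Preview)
Your approach is correct but takes a different route from the paper's. You argue via the GIT presentation: pull $\psi$ back along the atlas $R^{s}\to\bs_X(n,d)$, obtain a $PGL_N$-invariant morphism $R^{s}\to Z$, and then invoke the categorical/geometric quotient property of $M_X^{s}(n,d)=R^{s}//PGL_N$ to descend. The paper instead works intrinsically with the $\mathbb{G}_m$-gerbe structure of Lemma~\ref{gerbe}: given $\Psi:\bs_{n,d}\to Y$, it chooses an object $\beta\in\bs_{n,d}(M_X^{s}(n,d))$ lying over the identity (a section of the gerbe), sets $\Theta:=\Psi(\beta)$, and uses that any two such $\beta,\beta'$ differ by a line bundle twist---hence are locally isomorphic on a cover $\{U_i\}$---to show $\Theta$ is well-defined; commutativity and uniqueness are then checked by hand. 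Your argument has the advantage of being the standard GIT package and of making the descent step transparent via the categorical quotient axiom; the paper's argument has the virtue of using Lemma~\ref{gerbe} directly (which was just proved) and of never leaving the stacky language, though it tacitly relies on the gerbe admitting local sections. You correctly identify the alternative gerbe/rigidification phrasing in your final paragraph---that is in fact closer to what the paper actually does.
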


\begin{proof}
To simplify the notation for this proof, we will write $\bs_{n,d}$ instead of $\bs_{X}(n,d)$. We will now prove that $M_X^{s}(n,d)$ is a coarse moduli space for the algebraic stack $\bs_{n,d}$.
First, observe that for any algebraically closed field $\F$, the morphism $\Phi_{\Spec(\F)}:\bs_{n,d}(\Spec(\F))\rightarrow M^{s}_{X}(n,d)(\Spec(\F))$ is a bijection using the definitions and following the same line of arguments as in \cite[Example 3.7]{Hei} (see also \cite{Fra}).  Let now $Y$ be any scheme and $\Psi:\bs_{n,d}\to {Y}$ be a morphism of stacks.
We will construct a morphism of stacks $\Theta:{M}_X^{s}(n,d)\to {Y}$, such that the following diagram commutes:
$$
\xymatrix@1{&\bs_{n,d}\ar[d]^{\Phi}\ar[dl]_{\Psi}\\
{Y}&{M}_X^{s}(n,d)\ar[l]^{\Theta\ \ \ }.
}
  $$
  So for any scheme $T$ we denote by $\Phi_{T}:\bs_{n,d}(T)\to M_X^{s}(n,d)(T)$ the corresponding
  morphism  of groupoids.

  Hence if $ T=M_X^{s}(n,d)$, there exists an object $\beta\in Obj(\bs_{n,d}(M_X^{s}(n,d))$, such that
  $\Phi_{M_X^{s}(n,d)}(\beta)=Id_{M_X^{s}(n,d)}$.
  We let $\Theta:M_X^{s}(n,d)\to Y$ be the morphism obtained as the image of $\beta$ under $\Psi$, i.e., we set $\Theta:=\Psi(\beta)$.
  We will prove that $\Theta$ does not depend of the choice of $\beta$.
  Suppose that $\beta$ and $\beta'$ are such that $\Phi_{M_X^{s}(n,d)}(\beta)=\Phi_{M_X^{s}(n,d)}(\beta')=Id_{M_X^{s}(n,d)}.$
  By Lemma \ref{gerbe}, $\beta$ and $\beta'$ are locally isomorphic, i.e.,
  there exist a line bundle $L$ over $M_X^{s}(n,d)$ such that
  $\beta\cong\beta'\otimes p_{M_X^{s}(n,d)}^{*}L$. Thus, there is a cover $\{U_{i}\}_{i\in I}$
  of $M_X^{s}(n,d)$ with the following property for all $i\in I$:
  $$
  \beta|_{U_{i}}\cong\beta'|_{U_{i}}.
  $$
  Hence for all $i\in I$ we get now:
  $$
  \Psi_{M_X^{s}(n,d)}(\beta)|_{U_{i}}= \Psi_{M_X^{s}(n,d)}(\beta|_{U_{i}})=\Psi_{M_X^{s}(n,d)}(\beta'|_{U_{i}})=\Psi_{M_X^{s}(n,d)}(\beta')|_{U_{i}}.
  $$
Therefore, $\Psi_{M_X^{s}(n,d)}(\beta)=\Psi_{M_X^{s}(n,d)}(\beta')$ and this proves independence of the choice of $\beta$.

Now for any scheme $T$, we need to prove commutativity of the diagram
$$
\xymatrix@1{&\bs_{n,d}(T)\ar[d]^{\Phi_{T}}\ar[dl]_{\Psi_{T}}\\
{Y}(T)&{M}_X^{s}(n,d)(T)\ar[l]^{\Theta_{T}\ \ \ }.
}
$$
To do this, we consider $\alpha\in \bs_{n,d}(T)$ and $\Phi_{T}(\alpha)\in M_X^{s}(n,d)(T)$
which determines a morphism  $\Phi_{T}(\alpha):T\rightarrow M_X^{s}(n,d)$. Hence we have the diagram
$$
\xymatrix@1{&\bs_{n,d}(M_X^{s}(n,d))\ar[dd]^{\Phi_{M_X^{s}(n,d)}}\ar[dddl]_{\Psi_{M^{s}_{X}(n,d)}}\ar[rrr]^{\bs_{n,d}(\Phi_{T}(\alpha))}
&&&\bs_{n,d}(T)\ar[dd]^{\Phi_{T}}\ar[dddl]_{\Psi_{T}}\\
&&&&\\
&{M}_X^{s}(n,d)(M_X^{s}(n,d))\ar[ld]^{\Theta_{M_X^{s}(n,d)}}\ar[rrr]^{{M}_X^{s}(n,d)(\Phi_{T}(\alpha))}&&&
{M}_X^{s}(n,d)(T)\ar[ld]^{\Theta_{T}}\\
{Y}(M_X^{s}(n,d))\ar[rrr]^{{Y}(\Phi_{T}(\alpha))}&&&{Y}(T)
}
$$
where every square commutes.

Furthermore, there is a $\beta\in \bs_{n,d}(M_{X}^{s}(n,d))$ such that $\Phi_{M}(\beta)=id_{M^{s}_{X}(n,d)}$ and
$\bs_{n,d}(\Phi_{T}(\alpha))(\beta)=\alpha.$

Now, as $\Psi_{M^{s}_{X}(n,d)}(\beta)=(\Theta_{M^{s}_{X}(n,d)}\circ \Phi_{M^{s}_{X}(n,d)})(\beta)$, commutativity implies that
$\Psi_{T}(\beta)=(\Theta_{T}\circ \Phi_{T})(\beta)$.


Finally, suppose that there exists a morphism $\Gamma: M_X^{s}(n,d)\to Y$ such that
the following diagram commutes:
$$
\xymatrix@1{&\bs_X(n,d)\ar[d]^{\Phi}\ar[dl]_{\Psi}\\
{Y}&{M}_X^{s}(n,d)\ar[l]^{\Gamma \ \ \ }.\\
}
$$
\noindent Then, $\Gamma=\Gamma_{M_X^{s}(n,d)}(Id_{M_X^{s}(n,d)})=\Gamma\circ Id_{M_X^{s}(n,d)}=\Psi_{M_X^{s}(n,d)}(\beta)=\Theta\circ Id_{M_X^{s}(n,d)}=\Theta$, which finishes the proof.
\end{proof}

Now we get as consequences of the constructions of moduli spaces of $(k, l)$-stable vector bundles the following:

\begin{cor}
Let $(k,l)$ be a pair of integers satisfying the conditions (\ref{ec1}) and (\ref{ec2}).
Then the morphism $\bndkl \to M_X^{k,l}(n,d)$ is a $\mathbb{G}_m$-gerbe and $M_X^{k,l}(n,d)$ is
a coarse moduli space for the moduli stack $\bndkl$.
\end{cor}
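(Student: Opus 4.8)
The plan is to obtain both statements by restricting the $\mathbb{G}_m$-gerbe $\Phi\colon\bs_X(n,d)\to M_X^{s}(n,d)$ of Lemma~\ref{gerbe} and Proposition~\ref{coarse} along the open immersion $M_X^{k,l}(n,d)\hookrightarrow M_X^{s}(n,d)$. First I would record that, since $(k,l)$ satisfies (\ref{ec1}) and (\ref{ec2}), $(k,l)$-stability implies stability; hence $M_X^{k,l}(n,d)$ is an open subscheme of $M_X^{s}(n,d)$ and, by Theorem~\ref{thm-stack}, $\bndkl$ is an open substack of $\bs_X(n,d)$. Using the quotient presentations $\bs_X(n,d)=[R^{s}/GL_N]$, $M_X^{s}(n,d)=R^{s}//PGL_N$ and $\bndkl=[R^{k,l}/GL_N]$, $M_X^{k,l}(n,d)=R^{k,l}//PGL_N$ from Section~5, the open subscheme $R^{k,l}\subset R^{s}$ is $GL_N$-invariant, and since $(k,l)$-stability depends only on the underlying bundle it is saturated for the quotient map $q\colon R^{s}\to M_X^{s}(n,d)$, i.e.\ $R^{k,l}=q^{-1}(M_X^{k,l}(n,d))$. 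Because forming the quotient stack commutes with base change along $q$ (the $GL_N$-action on the target being trivial), this produces a $2$-cartesian square
$$
\xymatrix@1{\bndkl\ar[r]\ar[d]&\bs_X(n,d)\ar[d]^{\Phi}\\
M_X^{k,l}(n,d)\ar[r]&M_X^{s}(n,d),}
$$
whose left vertical arrow is the morphism in the Corollary.

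Since the property of being a $\mathbb{G}_m$-gerbe is stable under arbitrary base change and $\Phi$ is a $\mathbb{G}_m$-gerbe by Lemma~\ref{gerbe}, the pullback $\bndkl\to M_X^{k,l}(n,d)$ is again a $\mathbb{G}_m$-gerbe, which settles the first claim. For the coarse moduli statement I would run the argument of Proposition~\ref{coarse} verbatim, with $\bs_X(n,d)$ and $M_X^{s}(n,d)$ replaced by $\bndkl$ and $M_X^{k,l}(n,d)$: that argument used only that the structure morphism is a $\mathbb{G}_m$-gerbe (just verified) and that it induces a bijection on geometric points, and the latter holds here because under (\ref{ec1}) and (\ref{ec2}) the set $A_X^{k,l}(n,d)$ of isomorphism classes of $(k,l)$-stable bundles is in bijection with the rational points of $M_X^{k,l}(n,d)$ by Section~3, or equally because bijectivity on geometric points is inherited by base change of $\Phi$ along the open immersion. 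Alternatively one may invoke the general principle that for any $\mathbb{G}_m$-gerbe $\mathscr{G}\to Y$ the space $Y$ is a coarse moduli space for $\mathscr{G}$, since $\mathrm{Hom}(\mathscr{G},Z)=\mathrm{Hom}(Y,Z)$ for every algebraic space $Z$ by fppf descent, reducing to the trivial gerbe.

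I expect the only point requiring genuine care to be the $2$-cartesianness of the square above, equivalently the identification $\bndkl=\Phi^{-1}(M_X^{k,l}(n,d))$. This reduces to checking that $R^{k,l}$ equals the $q$-preimage of the open locus $M_X^{k,l}(n,d)$, which follows because $q$ is a geometric quotient over the stable locus -- so its fibres are single $PGL_N$-orbits -- and $(k,l)$-stability is constant along $PGL_N$-orbits. Everything else is then formal.
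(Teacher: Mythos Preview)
Your proposal is correct and follows essentially the same approach as the paper: the paper's proof consists of the single observation that $R^{k,l}$ is an open subscheme of $R^{s}$ and then an appeal to Lemma~\ref{gerbe} and Proposition~\ref{coarse}, which is precisely what you unpack in detail via the $2$-cartesian square and the base-change stability of $\mathbb{G}_m$-gerbes. Your careful verification that $R^{k,l}=q^{-1}(M_X^{k,l}(n,d))$ and your explicit justification of the coarse moduli property make explicit what the paper leaves implicit, but the route is the same.
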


\begin{proof}
This follows because $R^{k,l}$ as constructed before is a subscheme of $R^{s}$ and then by using Lemma \ref{gerbe} and Proposition \ref{coarse}.
\end{proof}

Following \cite[Cor. 3.12]{Hei} or \cite{Ra, DreNa} we can now reason as follows: Suppose that we are in the special case that $\gcd(n,d)=1$, then there exists a universal family, a Poincar\'e family $\mathcal{U}$ over $X\times M_X^{s}(n,d)$.
Moreover, if the pair $(k,l)$ satisfies the conditions (\ref{ec1}) and (\ref{ec2}), then $M_X^{k,l}(n,d)$
is an open subscheme of $M_X^{s}(n,d)$ and the restriction $\mathcal{U}|_{X\times M_X^{k,l}(n,d)}$
is the universal family over $X\times M_X^{k,l}(n,d)$.
Thus, the splitting of the gerbe $\bs_X(n,d)\to M_X^{s}(n,d)$ implies the splitting of
the gerbe $\bndkl\to M_X^{k,l}(n,d)$. On the other hand, for $\gcd(n,d)\neq 1$ it is well known that there is no open subset
$A\subset M_X^{s}(n,d)$, such that there exists a Poincar\'e family over $X\times A$.
Hence we have in this case that the gerbe $\bndkl\to M_X^{k,l}(n,d)$ does not split. So summarising we have shown:

\begin{cor}
Let $(k,l)$ be a pair of integers satisfying the conditions (\ref{ec1}) and (\ref{ec2}).
Then the $\mathbb{G}_m$-gerbe $\bndkl \to M_X^{k,l}(n,d)$ splits if and only if
$\bs_X(n,d)\to M_X^{s}(n,d)$ splits.
\end{cor}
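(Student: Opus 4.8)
The plan is to prove that \emph{each} of the two splitting properties is equivalent to the numerical condition $\gcd(n,d)=1$; the stated biconditional is then immediate. The direction $(\Leftarrow)$ is the easy one. A splitting of the $\mathbb{G}_m$-gerbe $\bs_X(n,d)\to M_X^s(n,d)$ is the same datum as a section $M_X^s(n,d)\to\bs_X(n,d)$, equivalently (by \cite[Cor. 3.12]{Hei}) a Poincar\'e family $\mathcal{U}$ on $X\times M_X^s(n,d)$. Since under (\ref{ec1}) and (\ref{ec2}) $(k,l)$-stability implies stability, $\bndkl$ is exactly the open substack of $\bs_X(n,d)$ sitting over the open subscheme $M_X^{k,l}(n,d)\subset M_X^s(n,d)$, and the previous corollary identifies $\bndkl\to M_X^{k,l}(n,d)$ with the restriction of $\bs_X(n,d)\to M_X^s(n,d)$ along this open immersion. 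Thus I would simply restrict the section, or equivalently restrict $\mathcal{U}$ to $X\times M_X^{k,l}(n,d)$, to obtain a splitting of $\bndkl\to M_X^{k,l}(n,d)$.

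For the converse $(\Rightarrow)$ the first step is to note that under (\ref{ec1}) and (\ref{ec2}) the subscheme $M_X^{k,l}(n,d)$ is a \emph{non-empty} Zariski-open subscheme of $M_X^s(n,d)$: its non-emptiness follows from Theorem \ref{prop-nonempti}(1), whose hypotheses (\ref{cond1}) and (\ref{cond2}) are precisely the right-hand inequalities in (\ref{ec1}) and (\ref{ec2}), while the left-hand inequalities guarantee that the $(k,l)$-stable bundles so produced are stable. Now assume $\bndkl\to M_X^{k,l}(n,d)$ splits, so that there is a Poincar\'e family over $X\times M_X^{k,l}(n,d)$. By the classical dichotomy for Poincar\'e families on moduli of stable bundles (Ramanan, Dr\'ezet--Narasimhan; see \cite{Ra, DreNa} and the discussion preceding this corollary) a Poincar\'e family exists over $X\times A$ for \emph{some} non-empty open $A\subseteq M_X^s(n,d)$ if and only if $\gcd(n,d)=1$; hence $\gcd(n,d)=1$, and then a global Poincar\'e family exists over $X\times M_X^s(n,d)$, so $\bs_X(n,d)\to M_X^s(n,d)$ splits.

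The main obstacle is the appeal to this dichotomy in the $(\Rightarrow)$ direction, i.e. the non-obvious fact that the obstruction to a universal family when $\gcd(n,d)\neq 1$ cannot be killed by shrinking the base to a non-empty open set. If one wishes to argue intrinsically instead, I would work with the cohomological Brauer class $[\Phi]\in H^2_{\mathrm{et}}(M_X^s(n,d),\mathbb{G}_m)$ of the gerbe $\Phi$. Because $M_X^s(n,d)$ is a smooth irreducible variety, $H^2_{\mathrm{et}}(M_X^s(n,d),\mathbb{G}_m)$ is torsion and, by Grothendieck's theorem, injects into $\mathrm{Br}(k(M_X^s(n,d)))$; since $M_X^{k,l}(n,d)$ is a non-empty open subscheme it has the same function field, so the restriction map $H^2_{\mathrm{et}}(M_X^s(n,d),\mathbb{G}_m)\to H^2_{\mathrm{et}}(M_X^{k,l}(n,d),\mathbb{G}_m)$ is injective, and the class of $\bndkl\to M_X^{k,l}(n,d)$ is the image of $[\Phi]$. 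Hence $[\Phi]=0$ if and only if this restricted class is $0$, which is exactly the assertion. I would present the Poincar\'e-family version in the text, as it directly continues the discussion already set up before the statement.
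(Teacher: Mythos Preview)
Your proposal is correct and follows essentially the same approach as the paper: both arguments reduce the biconditional to the numerical dichotomy that a Poincar\'e family exists over (some non-empty open of) $M_X^{s}(n,d)$ if and only if $\gcd(n,d)=1$, restricting the universal family for $(\Leftarrow)$ and invoking \cite{Ra, DreNa} for $(\Rightarrow)$. Your additional Brauer-class argument via the injectivity of $H^2_{\mathrm{\acute{e}t}}(M_X^{s}(n,d),\mathbb{G}_m)\to H^2_{\mathrm{\acute{e}t}}(M_X^{k,l}(n,d),\mathbb{G}_m)$ is not in the paper but is a valid and more self-contained alternative for the non-coprime direction.
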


Let us finally also recall the following relations between moduli stacks and coarse moduli spaces of stable bundles over $X$ (see \cite[Prop. 3.3]{TG}):

\begin{prop}
Let $\bs_X(n, d)$ be the moduli stack of stable vector bundles of rank $n$ and degree $d$ over $X$.
There is a commutative diagram of stacks

$$
\xymatrix@1{[R^{s}/GL(N)]\ar[r]^{q}\ar[d]_{g}^{\cong}&[R^{s}/PGL_{N}]\ar[d]^{h}_{\cong}\\
\bs_X(n, d)\ar[r]_{\varphi}&{M}_X^{s}(n,d).\\
}
$$
where $g$ and $h$ are isomorphisms of stacks.
\end{prop}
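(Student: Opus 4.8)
The plan is to build the three maps $g$, $q$, $h$ out of the universal family on the Quot scheme and then check that the square commutes and that $g$ and $h$ are equivalences. Recall from the construction reproduced earlier that $R^{s}\subset Quot_{\mathcal H}^{n,d}$ is the open subscheme parametrising locally free stable quotients $\mathcal H\to E\to 0$ which induce an isomorphism on global sections after the twist by $\mathcal O_X(t)$, that $R^{s}$ carries a universal family $\mathcal E^{univ}$ on $X\times R^{s}$, and that by construction $\bs_X(n,d)=[R^{s}/GL_N]$ as a quotient stack while $M_X^{s}(n,d)=R^{s}/\!\!/PGL_N$ as a GIT quotient.

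First I would produce $g$. The family $\mathcal E^{univ}$ gives a morphism $R^{s}\to\bs_X(n,d)$, and one checks it is $GL_N$-invariant: acting by an element of $GL_N$ only changes the chosen basis of $H^{0}(X,E(t))$ and hence replaces the family by an isomorphic one. So it descends to $g:[R^{s}/GL_N]\to\bs_X(n,d)$. For a quasi-inverse, given a family of stable bundles on $X\times T$ one twists by $\mathcal O_X(t)$ and pushes forward; since every stable $E$ has $E(t)$ globally generated with $h^{0}(X,E(t))=N$ (uniformly in families, after enlarging $t,N$), the rank-$N$ bundle $pr_{2*}\mathcal E(t)$ is \'etale-locally trivial, and a choice of trivialisation yields a $T$-point of $R^{s}$, the ambiguity being exactly the $GL_N$-torsor of trivialisations. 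This makes $g$ an isomorphism of stacks; alternatively one simply invokes the stated identification $\bs_X(n,d)=[R^{s}/GL_N]$. Next, the central $\mathbb G_m\subset GL_N$ acts trivially on $R^{s}$, so the surjection $GL_N\twoheadrightarrow PGL_N$ induces a morphism $q:[R^{s}/GL_N]\to[R^{s}/PGL_N]$ (which is in fact the $\mathbb G_m$-gerbe of Lemma \ref{gerbe} transported along $g$).

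For $h$ the key point is that stable bundles are simple, so for every point of $R^{s}$ the $GL_N$-stabiliser is exactly the scalars $\mathbb G_m$; hence $PGL_N$ acts on $R^{s}$ with trivial stabilisers, i.e.\ freely. Since the action is free and, by the GIT construction of $M_X^{s}(n,d)$ recalled from \cite{HL}, every point of $R^{s}$ is GIT-stable so that $R^{s}\to M_X^{s}(n,d)$ is a geometric quotient and a $PGL_N$-torsor, the quotient stack $[R^{s}/PGL_N]$ is represented by its coarse space, which is $R^{s}/\!\!/PGL_N=M_X^{s}(n,d)$; this is the isomorphism $h$. Commutativity of the square I would check already on the atlas $R^{s}$: both composites send a $T$-point of $R^{s}$ to the classifying morphism $T\to M_X^{s}(n,d)$ of the corresponding family, and this classifying morphism is $GL_N$-invariant and factors through $PGL_N$ because scalars act trivially; thus $\varphi\circ g=h\circ q$ holds after pulling back to $R^{s}$ and hence descends.

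The main obstacle will be the verification that $g$ is an equivalence, that is, the descent argument identifying $\bs_X(n,d)$ with $[R^{s}/GL_N]$: one must know that $h^{0}(X,E(t))=N$ and that $E(t)$ is globally generated for all stable $E$ uniformly in families, and that the resulting point of $R^{s}$ is well defined up to precisely the $GL_N$-action. Once this is in place, freeness of the $PGL_N$-action and the identification $[R^{s}/PGL_N]\cong M_X^{s}(n,d)$ are formal consequences of simplicity of stable bundles together with the properties of the GIT quotient recalled from \cite{HL}, and commutativity is immediate from the description on the atlas.
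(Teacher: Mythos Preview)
Your argument is correct and follows the standard route. The paper itself does not give a proof of this proposition: it is recalled with a citation to \cite[Prop.~3.3]{TG}, so there is nothing to compare your approach against beyond noting that your write-up supplies precisely the argument one expects that reference to contain---build $g$ from the universal quotient on $R^{s}$ together with the $GL_N$-torsor of trivialisations of $pr_{2*}\mathcal E(t)$, use simplicity of stable bundles to see that $PGL_N$ acts freely on $R^{s}$, and then identify $[R^{s}/PGL_N]$ with the GIT quotient because $R^{s}\to M_X^{s}(n,d)$ is a principal $PGL_N$-bundle. The only point worth tightening is this last step: for $h$ to be an isomorphism of \emph{stacks} (and not merely a bijection on geometric points) you need that $R^{s}\to M_X^{s}(n,d)$ is a $PGL_N$-torsor in the \'etale (or fppf) topology, not just a set-theoretically free quotient; this is indeed established in \cite{HL}, but you should invoke it explicitly rather than fold it into ``properties of the GIT quotient''.
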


This now implies together with the above considerations readily the following relation between the moduli stacks and moduli spaces of $(k, l)$-stable vector bundles over $X$.

\begin{cor}\label{diagerb}
Let $(k,l)$ be a pair of integers satisfying the conditions (\ref{ec1}) and (\ref{ec2}). Let $\bndkl$ be the moduli stack of $(k,l)$-stable vector bundles of rank $n$ and degree $d$ over $X$. There is a commutative diagram of stacks

$$
\xymatrix@1{[R^{k,l}/GL(N)]\ar[r]^{q}\ar[d]_{g}^{\cong}&[R^{k,l}/PGL_{N}]\ar[d]^{h}_{\cong}\\
\bndkl\ar[r]_{\varphi}&{M}_X^{k,l}(n,d).\\
}
$$
where $g$ and $h$ are isomorphisms of stacks.
\end{cor}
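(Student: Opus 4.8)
The plan is to obtain this diagram by restricting the corresponding diagram for stable vector bundles (the preceding Proposition) to the $(k,l)$-stable locus. Under the hypotheses (\ref{ec1}) and (\ref{ec2}) every $(k,l)$-stable bundle is stable, so by the GIT construction of $M_X^{k,l}(n,d)$ in Section~2, together with the descriptions $\bs_X(n,d)=[R^{s}/GL_N]$ and $M_X^{s}(n,d)=R^{s}//PGL_N$ recalled at the beginning of this section, the scheme $R^{k,l}$ is a $GL_N$-invariant (equivalently $PGL_N$-invariant) open subscheme of $R^{s}$, it consists of GIT-stable points for the linearisation in use, and $M_X^{k,l}(n,d)=R^{k,l}//PGL_N$ is precisely the open subscheme of $M_X^{s}(n,d)$ cut out by the open condition of $(k,l)$-stability.

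First I would record the elementary descent fact that for a $G$-invariant open subscheme $W\subseteq Z$ the quotient stack $[W/G]$ is canonically the open substack of $[Z/G]$ supported on the image of $W$, and that this identification is compatible both with the projection $GL_N\twoheadrightarrow PGL_N$ and with passage to GIT quotients along saturated invariant open subsets. Applying this with $Z=R^{s}$ and $W=R^{k,l}$, for $G=GL_N$ and $G=PGL_N$ respectively, the isomorphisms $g\colon [R^{s}/GL_N]\to\bs_X(n,d)$ and $h\colon [R^{s}/PGL_N]\to M_X^{s}(n,d)$ of the preceding Proposition restrict to isomorphisms $[R^{k,l}/GL_N]\cong\bndkl$ and $[R^{k,l}/PGL_N]\cong M_X^{k,l}(n,d)$, because the open substack of $\bs_X(n,d)$, resp.\ open subscheme of $M_X^{s}(n,d)$, matched by $R^{k,l}$ is exactly $\bndkl$, resp.\ $M_X^{k,l}(n,d)$. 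The horizontal arrow $q$ is the restriction of the map $q$ of the stable case, induced by $GL_N\twoheadrightarrow PGL_N$, and $\varphi$ is the $\mathbb{G}_m$-gerbe morphism $\bndkl\to M_X^{k,l}(n,d)$ produced by Lemma~\ref{gerbe} and Proposition~\ref{coarse}.

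Commutativity then follows formally. The composites $\varphi\circ g$ and $h\circ q$ are two morphisms $[R^{k,l}/GL_N]\to M_X^{k,l}(n,d)$; post-composing each with the open immersion $M_X^{k,l}(n,d)\hookrightarrow M_X^{s}(n,d)$ and using the restriction identifications above, both become the restriction to $[R^{k,l}/GL_N]$ of the common composite of the commuting stable-case diagram. Since the open immersion $M_X^{k,l}(n,d)\hookrightarrow M_X^{s}(n,d)$ is a monomorphism of stacks, this yields the desired $2$-isomorphism $\varphi\circ g\simeq h\circ q$. Alternatively, one may repeat on $T$-points the argument of Proposition~\ref{coarse}, now carried out over the open locus $R^{k,l}$.

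The one step deserving genuine attention — and the main obstacle — is the verification that $R^{k,l}$ is a GIT-saturated open subset of $R^{s}$, so that $R^{k,l}//PGL_N$ is really an open subscheme of $M_X^{s}(n,d)$ and not merely a scheme mapping to it; but this is exactly what underlies the construction of $M_X^{k,l}(n,d)$ as an open subscheme of $M_X^{s}(n,d)$ in Section~2, using only that $(k,l)$-stable bundles are GIT-stable for the chosen linearisation together with the openness of $(k,l)$-stability. Everything else is a formal consequence of the stable case and of the compatibility of quotient stacks and GIT quotients with restriction to invariant open subsets.
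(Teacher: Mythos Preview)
Your proposal is correct and follows exactly the approach the paper intends: the paper presents this corollary as an immediate consequence of the preceding Proposition for stable bundles together with the fact that $R^{k,l}\subset R^{s}$ is an invariant open subscheme, offering no further argument beyond the phrase ``This now implies together with the above considerations readily the following.'' Your write-up simply supplies the details (compatibility of quotient stacks with invariant opens, GIT-saturation of $R^{k,l}$, and the formal commutativity check) that the paper leaves implicit.
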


\section{Cohomological properties of $\bndkl$ and $M_X^{k,l}(n,d)$.}

We will now derive some cohomological properties for the moduli stacks and moduli spaces of $(k, l)$-stable vector bundles over an algebraic curve.
Let us start with some general remarks on the cohomology of algebraic stacks.
Let $\mathscr{X}$ be an algebraic stack, which is smooth and locally of finite type over $\Spec(\F)$ where $\F$ is either the algebraic closure $\overline{\F}_q$ of the field $\F_q$ or the field $\C$ of complex numbers.

If $\F=\overline{\F}_q$, we use $l$-adic cohomology of the stack $\mathscr{X}$, where $l$ is a prime different from $p$. The $l$-adic cohomology of $\mathscr{X}$ is defined over the lisse-\'etale site $\mathscr{X}_{\text{lis-\'et}}$ of $\mathscr{X}$ and is given as the limit of the cohomologies of all the open substacks $\mathscr{U}$ of finite type of the given algebraic stack $\mathscr{X}$ (see \cite{HS}), i.e. we set
$$H^*( \mathscr{X}, \Ql)=\lim_{\substack{{\mathscr{U}\subset\mathscr{X}},\\
\text{open, finite type}}}\hspace*{-0.5cm}H^*(\mathscr{U}, \Ql).$$
If $\F=\C$, then we use rational cohomology $H^*( \mathscr{X}, \Q)$ of the stack $\mathscr{X}$ instead and all statements below hold if we replace $l$-adic cohomology everywhere with rational cohomology.

As a general reference for cohomology of algebraic stacks we refer to \cite{LMB} and especially for $l$-adic cohomology and its main properties to the general formalism of cohomology functors as developed by Behrend \cite{Be2, Be3}, and in subsequent work by Laszlo and Olsson \cite{LO1, LO2}. Concerning in particular the cohomology of the moduli stack $\bnd$ of all vector bundles of rank $n$ and degree $d$ over an algebraic curve $X$ we will also refer to \cite{HS, Fra, NSt}.

Let us assume throughout the rest of this section that the rank $n$ and degree $d$ of all of our vector bundles over the algebraic curve $X$ are coprime.
Then the moduli space of stable vector bundles $M_X^{s}(n,d)$ admits a universal family $E^{univ}$ of vector bundles of rank $n$ and degree $d$.
The existence of such a universal family means that the gerbe
$ \Phi: \mathscr{B}un^{s}_X(n,d)\to M_X^{s}(n,d)$ is neutral, (see \cite[Lemma 3.10]{Hei}) i.e., we have a splitting
$$
\mathscr{B}un^{s}_X(n,d)\cong {M}_X^{s}(n,d)\times \mathscr{B}\mathbb{G}_m
$$
where $\mathscr{B}\mathbb{G}_m$ is again the classifying stack of line bundles or principal $\mathbb{G}_m$- bundles.
Now using Corollary \ref{diagerb}, the restriction to $(k,l)$-stable vector bundles under the condition that for the pair $(k, l)$ of integers the inequalities $(\ref{ec1})$ and $(\ref{ec2})$ hold gives a gerbe
$$
\Phi^{k,l}:\mathscr{B}un_X^{k,l}(n,d)\to {M}_X^{k,l}(n,d).
$$
Moreover, we have that it splits, i.e.
$$
\mathscr{B}un_X^{k,l}(n,d)\cong {M}_X^{k,l}(n,d)\times \mathscr{B}\mathbb{G}_m.
$$
Hence, in this particular situation the cohomology of the moduli stack of $(k, l)$-stable vector bundles can be calculated directly as follows:
\begin{prop}
If the pair $(k,l)$ of integers satisfies the conditions $(\ref{ec1})$ and $(\ref{ec2})$, then
$$
H^{*}(\mathscr{B}un_X^{k,l}(n,d), \Ql)\cong H^{*}({M}_X^{k,l}(n,d), \Ql)\otimes H^{*}(\mathscr{B}\mathbb{G}_m, \Ql).
$$
\end{prop}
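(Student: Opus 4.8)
The plan is to deduce the statement directly from the splitting
$$
\bndkl\cong {M}_X^{k,l}(n,d)\times \mathscr{B}\mathbb{G}_m
$$
established just above the proposition, combined with the Künneth formula for $l$-adic cohomology of algebraic stacks. First I would record that, under the conditions $(\ref{ec1})$ and $(\ref{ec2})$ together with the standing hypothesis $\gcd(n,d)=1$, the moduli stack $\bndkl$ is in fact of finite type: by Corollary \ref{diagerb} it is isomorphic to the quotient stack $[R^{k,l}/GL(N)]$ with $R^{k,l}$ a finite-type scheme, and the gerbe $\Phi^{k,l}\colon\bndkl\to {M}_X^{k,l}(n,d)$ is neutral because the restriction of the universal family $E^{univ}$ to $X\times {M}_X^{k,l}(n,d)$ provides a section (cf. \cite[Lemma 3.10]{Hei}), which yields the product decomposition above with ${M}_X^{k,l}(n,d)$ quasi-projective and $\mathscr{B}\mathbb{G}_m$ of finite type. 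In particular the direct limit appearing in the definition of $H^*(\bndkl,\Ql)$ is unnecessary here and one may argue with the cohomology of a finite-type algebraic stack throughout.

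Next I would invoke the Künneth isomorphism in the six-functor formalism for $l$-adic cohomology of algebraic stacks (Behrend \cite{Be2, Be3}, Laszlo--Olsson \cite{LO1, LO2}; see also \cite{LMB}): for algebraic stacks $\mathscr{X}$ and $\mathscr{Y}$ of finite type over $\Spec(\F)$ there is a natural map
$$
H^{*}(\mathscr{X},\Ql)\otimes_{\Ql} H^{*}(\mathscr{Y},\Ql)\longrightarrow H^{*}(\mathscr{X}\times\mathscr{Y},\Ql),
$$
which is an isomorphism once one of the two factors has cohomology that is flat over $\Ql$ in each degree. Applying this with $\mathscr{X}={M}_X^{k,l}(n,d)$ and $\mathscr{Y}=\mathscr{B}\mathbb{G}_m$, it only remains to note that $H^{*}(\mathscr{B}\mathbb{G}_m,\Ql)\cong\Ql[c]$ is the polynomial algebra on one generator $c$ in degree $2$ (the first Chern class of the universal line bundle), and hence is a free $\Ql$-module in every degree. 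Therefore the Künneth map is an isomorphism, which together with the splitting gives the asserted formula. Over $\F=\C$ the same argument applies verbatim with rational cohomology, using $H^{*}(\mathscr{B}\mathbb{G}_m,\Q)\cong\Q[c]$.

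The one genuine point requiring care — and the step I would be most cautious about — is the availability and precise statement of a Künneth formula for algebraic stacks that are \emph{not} Deligne--Mumford, since both $\mathscr{B}\mathbb{G}_m$ and $\bndkl$ carry non-trivial automorphism groups. This is, however, exactly what the Laszlo--Olsson formalism supplies: external tensor products of complexes are compatible with $Rf_{*}$ along finite-type morphisms, and the flatness of $H^{*}(\mathscr{B}\mathbb{G}_m,\Ql)$ forces the Künneth spectral sequence $\mathrm{Tor}^{\Ql}_{p}\!\big(H^{q}(\mathscr{X}),H^{r}(\mathscr{Y})\big)\Rightarrow H^{p+q+r}(\mathscr{X}\times\mathscr{Y})$ to degenerate with vanishing higher $\mathrm{Tor}$-terms. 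Should one wish to avoid any stacky Künneth statement, an equivalent route is a Leray--Hirsch argument for the projection $\mathscr{X}\times\mathscr{B}\mathbb{G}_m\to\mathscr{X}$: the Leray spectral sequence has $E_2$-page $H^{p}(\mathscr{X},\Ql)\otimes H^{q}(\mathscr{B}\mathbb{G}_m,\Ql)$, the local system on the base is trivial since the fibration is a product, and all differentials vanish because $H^{*}(\mathscr{B}\mathbb{G}_m,\Ql)$ is concentrated in even degrees and generated by classes pulled back from the total space, giving $H^{n}(\mathscr{X}\times\mathscr{B}\mathbb{G}_m,\Ql)\cong\bigoplus_{p+q=n}H^{p}(\mathscr{X},\Ql)\otimes H^{q}(\mathscr{B}\mathbb{G}_m,\Ql)$.
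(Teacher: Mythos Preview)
Your proposal is correct and follows essentially the same approach as the paper: the paper states the proposition as an immediate consequence of the splitting $\bndkl\cong {M}_X^{k,l}(n,d)\times \mathscr{B}\mathbb{G}_m$ with no further argument, and you have simply spelled out the K\"unneth step (together with the finite-type and flatness verifications) that the paper leaves implicit. Your added caution about the six-functor formalism for Artin stacks and the alternative Leray--Hirsch route are both sound and more detailed than what the paper provides.
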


However, in the rank two case, it is possible to compute the cohomology of the moduli stack of $(k,l)$-stable
vector bundles over $X$ using the Semi-Purity Lemma  (see \cite[Lemma 2.2.2]{HS}). For this it is necessary to compute the codimension with respect
to the moduli stack of stable vector bundles.
\begin{lem}\label{lem-coh}
Let $(k,l)$ be a pair of integers such that $0\leq k+l <g-1$, then we have the following statements:
\begin{enumerate}
  \item We have
 $$
codim (\mathscr{B}un_X^{s}(2,d)\setminus \mathscr{B}un_X^{k,l}(2,d))= g-k-l-1.
$$
\item If $k=l,$ and $0\leq 3k \leq 2g-2,$ then
$$
codim (\mathscr{B}un_X^{s}(3,d)\setminus \mathscr{B}un_X^{k,l}(3,d))= 2g-3k-3.
$$
\end{enumerate}
\end{lem}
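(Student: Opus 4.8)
The plan is to describe the complement $\mathscr{B}un_X^{s}(n,d)\setminus\bndkl$ as a union of Segre strata and then read its codimension off from the known dimensions of those strata. By Remark \ref{kl-vs-st}, a stable bundle $E$ of rank $n$ and degree $d$ fails to be $(k,l)$-stable precisely when $s_{m}(E)\leq k(n-m)+ml$ for some $m$ with $1\leq m\leq n-1$. Hence, as substacks of $\mathscr{B}un_X^{s}(n,d)$,
$$
\mathscr{B}un_X^{s}(n,d)\setminus\bndkl=\bigcup_{m=1}^{n-1}\ \ \bigcup_{\substack{0<s\leq k(n-m)+ml\\ s\equiv md\ (\mathrm{mod}\ n)}}\widetilde{M}_X(n,d,m,s),
$$
where $\widetilde{M}_X(n,d,m,s)$ denotes the preimage in the stack of the Segre stratum $M_X^{s}(n,d,m,s)$.

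Next, recall from Section 1 that, for $0<s\leq m(n-m)(g-1)$ with $s\equiv md\ (\mathrm{mod}\ n)$, the stratum $M_X^{s}(n,d,m,s)$ is non-empty, irreducible, and of codimension $m(n-m)(g-1)-s$ inside $M_X^{s}(n,d)$. Since the morphism $\mathscr{B}un_X^{s}(n,d)\to M_X^{s}(n,d)$ is a $\mathbb{G}_m$-gerbe (Lemma \ref{gerbe}), in particular smooth of relative dimension $-1$, taking preimages preserves codimension, so it suffices to compute inside the quasi-projective scheme $M_X^{s}(n,d)$. Under the hypothesis $0\leq k+l<g-1$ (for $n=2$), respectively $0\leq 3k\leq 2g-2$ (for $n=3$ with $k=l$), one checks that $k(n-m)+ml\leq m(n-m)(g-1)$ for every relevant $m$, so each stratum appearing above lies in the range where the dimension formula applies and is non-empty. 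Therefore the codimension of the complement equals
$$
\min_{1\leq m\leq n-1}\bigl(m(n-m)(g-1)-s_{m}^{\mathrm{max}}\bigr),
$$
where $s_{m}^{\mathrm{max}}$ is the largest positive integer $\leq k(n-m)+ml$ that is congruent to $md$ modulo $n$.

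It then remains to evaluate $s_{m}^{\mathrm{max}}$ and minimise. For $n=2$ only $m=1$ occurs and $k(n-m)+ml=k+l$; using $\gcd(2,d)=1$ (so $s_{1}(E)\equiv d$ is odd) one obtains the codimension $g-k-l-1$. For $n=3$ with $k=l$ both $m=1$ and $m=2$ occur, and in each case $k(n-m)+ml=3k$; using $\gcd(3,d)=1$ one performs the corresponding congruence computation for $m=1$ and $m=2$ separately and checks that the two ranks yield the common value $2g-3k-3$.

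\emph{Main obstacle.} The real content sits in this last step: pinning down $s_{m}^{\mathrm{max}}$ from the congruence $s\equiv md\ (\mathrm{mod}\ n)$ together with $\gcd(n,d)=1$, and verifying in the rank-$3$ case that the destabilising ranks $m=1$ and $m=2$ give the same codimension rather than two different ones. One also has to make sure that the Segre strata in play are genuinely non-empty and lie in the range $0<s\leq m(n-m)(g-1)$ where the quoted dimension formula is valid — this is exactly the role of the numerical hypotheses on $k,l,g$ — and to record that a $\mathbb{G}_m$-gerbe does not alter codimensions, so that the computation on the coarse moduli space transfers verbatim to the moduli stack.
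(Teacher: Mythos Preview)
Your approach is essentially the same as the paper's, though considerably more explicit. The paper's proof is a two-line invocation: Corollary~\ref{diagerb} (the $\mathbb{G}_m$-gerbe) to pass from stack to scheme, and Theorem~\ref{codimprop} for the codimension on the scheme side. Your Segre-strata decomposition is a refinement of the dimension count that underlies Theorem~\ref{codimprop}: that theorem bounds the dimension of the non-$(k,l)$-stable locus by parametrising destabilising extensions (following Narasimhan--Ramanan), whereas you stratify by the exact value of $s_m$ and read off dimensions from the Brambila-Paz--Lange and Russo--Teixidor results quoted in Section~1. Both routes feed through the same gerbe reduction (your Lemma~\ref{gerbe} step is exactly the paper's use of Corollary~\ref{diagerb}). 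Your formulation has the advantage of making the \emph{equality} in the lemma visible rather than just the lower bound that Theorem~\ref{codimprop} literally states; the price is the congruence bookkeeping you flag as the main obstacle, which the paper simply does not address.
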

\begin{proof}
This Lemma is a direct consequence of Corollary \ref{diagerb} and the computation of the codimension of the moduli
space of $(k,l)$-stable vector bundles with respect to the moduli space
of stable bundles as given in Proposition \ref{codimprop}.
\end{proof}

Using the Semi-Purity Lemma we then get:
\begin{cor}\label{cor-coh}
Let $(k,l)$ be a pair of integers such that $0\leq k+l <g-1$, then we have
$$
H^{*}(\mathscr{B}un_X^{k,l}(2,d), \Ql)\cong H^{*}(\mathscr{B}un_X^{s}(2,d), \Ql)
$$
for $*<2(g-k-l-1)$ if d is odd.
\end{cor}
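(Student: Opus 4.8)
The plan is to deduce the cohomological statement directly from the Semi-Purity Lemma (\cite[Lemma 2.2.2]{HS}) together with the codimension computation in part (1) of Lemma \ref{lem-coh}. Recall that $\mathscr{B}un_X^{k,l}(2,d)$ is, by Theorem \ref{thm-stack}, an open substack of $\mathscr{B}un_X^{s}(2,d)$, hence also an open substack of the moduli stack $\mathscr{B}un_X(2,d)$ of all rank $2$ degree $d$ bundles. Writing $\mathscr{Z}:=\mathscr{B}un_X^{s}(2,d)\setminus\mathscr{B}un_X^{k,l}(2,d)$ for the complementary closed substack, Lemma \ref{lem-coh}(1) gives $\mathrm{codim}\,\mathscr{Z}=g-k-l-1$ inside $\mathscr{B}un_X^{s}(2,d)$.

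First I would set up the excision / long exact sequence in $l$-adic cohomology associated to the open–closed decomposition $\mathscr{B}un_X^{k,l}(2,d)\hookrightarrow\mathscr{B}un_X^{s}(2,d)\hookleftarrow\mathscr{Z}$, i.e. the Gysin sequence
$$
\cdots\to H^{*-2c}(\mathscr{Z},\Ql)(-c)\to H^{*}(\mathscr{B}un_X^{s}(2,d),\Ql)\to H^{*}(\mathscr{B}un_X^{k,l}(2,d),\Ql)\to\cdots
$$
where $c=\mathrm{codim}\,\mathscr{Z}=g-k-l-1$. Strictly speaking, since $\mathscr{Z}$ need not be smooth one has to work with cohomology with supports $H^{*}_{\mathscr{Z}}(\mathscr{B}un_X^{s}(2,d),\Ql)$ rather than a naive Gysin map; the Semi-Purity Lemma of Heinloth–Schmitt is exactly the statement that $H^{i}_{\mathscr{Z}}(\mathscr{B}un_X^{s}(2,d),\Ql)=0$ for $i<2c$, which holds because $\mathscr{B}un_X^{s}(2,d)$ is smooth and $\mathscr{Z}$ has codimension $c$. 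Feeding this vanishing into the long exact sequence of the pair $(\mathscr{B}un_X^{s}(2,d),\mathscr{B}un_X^{k,l}(2,d))$ shows that the restriction map
$$
H^{*}(\mathscr{B}un_X^{s}(2,d),\Ql)\to H^{*}(\mathscr{B}un_X^{k,l}(2,d),\Ql)
$$
is an isomorphism for $*<2c-1=2(g-k-l-1)-1$ and injective at $*=2c-1$; a small bookkeeping of the indices gives the stated range $*<2(g-k-l-1)$.

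The one genuinely delicate point — and the place where the hypothesis that $d$ is \emph{odd} enters — is the verification that the Semi-Purity Lemma is actually applicable, i.e. that the stratification of $\mathscr{B}un_X^{s}(2,d)$ by the closed substack $\mathscr{Z}$ has the required codimension uniformly and that the ambient stack is smooth of the expected type. When $d$ is odd (so $\gcd(2,d)=1$) the moduli stack $\mathscr{B}un_X^{s}(2,d)$ is a $\mathbb{G}_m$-gerbe over the smooth projective variety $M_X^{s}(2,d)$ by Lemma \ref{gerbe}, and via Corollary \ref{diagerb} the locus $\mathscr{Z}$ corresponds to the honest closed subvariety $M_X^{s}(2,d)\setminus M_X^{k,l}(2,d)$ whose codimension is computed in Proposition \ref{codimprop}; this is what makes the codimension count of Lemma \ref{lem-coh}(1) a genuine geometric codimension in a smooth variety, so the Semi-Purity Lemma applies cleanly. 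For $d$ even the presence of strictly semistable bundles means one no longer has such a clean gerbe description over a smooth variety, and the argument would break down — hence the parity restriction. So the main obstacle to address carefully is not the homological algebra of the Gysin sequence, which is routine, but checking that the geometric hypotheses of Semi-Purity (smoothness of the ambient stack, exact codimension of the removed stratum) are met, and this is precisely where oddness of $d$ is used.
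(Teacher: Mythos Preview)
Your approach is exactly the paper's: the corollary is stated immediately after Lemma \ref{lem-coh} with the words ``Using the Semi-Purity Lemma we then get,'' so the intended argument is precisely to feed the codimension $c=g-k-l-1$ from Lemma \ref{lem-coh}(1) into \cite[Lemma 2.2.2]{HS}, as you do via the long exact sequence for cohomology with supports.

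One small correction to your side discussion of the hypothesis $d$ odd. Your explanation that for $d$ even ``one no longer has such a clean gerbe description over a smooth variety'' is not right: Lemma \ref{gerbe} makes $\mathscr{B}un_X^{s}(2,d)\to M_X^{s}(2,d)$ a $\mathbb{G}_m$-gerbe for \emph{any} $d$, and $M_X^{s}(2,d)$ is always a smooth quasi-projective variety; strictly semistable bundles lie outside $\mathscr{B}un_X^{s}(2,d)$ by definition, so they do not obstruct smoothness or the gerbe structure. The Semi-Purity Lemma therefore applies equally well for $d$ even, and the codimension computation in Lemma \ref{lem-coh}(1) is stated without parity restriction. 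The oddness assumption in the corollary is simply inherited from the blanket hypothesis $\gcd(n,d)=1$ imposed at the start of the section (so that the gerbe splits and one can pass freely between stack and coarse space cohomology), rather than being essential to the semi-purity step itself. This does not affect the correctness of your main argument.
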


It is well known that if $\mathfrak{Z}\to\mathfrak{X}$ is an embedding of algebraic stacks
of codimension $c$, then by the associated Gysin sequence in cohomology we have the following isomorphism:
$$
H^{i}(\mathfrak{Z}, \Ql)\cong H^{i}(\mathfrak{Z}\setminus\mathfrak{X}, \Ql),
$$
whenever $i< 2c-1$ (see \cite[Lemma 2.2.2]{HS}).

Now for rank two vector bundles of even degree over $X$, the filtration given by $(k,l)$-stability can be rewritten as:
$$
\cdots\supseteq  \mathscr{B}un_X^{-2}(2,d)\supseteq \mathscr{B}un_X^{-1}(2,d)\supseteq \mathscr{B}un_X^{0}(2,d)\supseteq \mathscr{B}un_X^{1}(2,d)\supseteq \cdots
$$
The moduli stack $\mathscr{B}un_X^{-1}(2,d)$ corresponds to the moduli stack of semistable vector bundles and $\mathscr{B}un_X^{0}(2,d)$ corresponds to the moduli stack of stable vector bundles.
Hence $\mathscr{B}un_X^{stss}(2, d):=\mathscr{B}un_X^{-1}(2,d)\setminus \mathscr{B}un_X^{0}(2,d)$ is given by the {\it strictly} semistable vector bundles and determines a closed substack of $\mathscr{B}un_X^{-1}(2,d)$ and we get as a consequence:
\begin{cor}
There is an isomorphism
$$H^{i}(\mathscr{B}un_X^{stss}(2,d), \Ql)\cong H^{i}(\mathscr{B}un_X^{s}(2,d), \Ql)$$
for all $i<2\, codim(\mathscr{B}un_X^{stss}(2,d))-1.$
\end{cor}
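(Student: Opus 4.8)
The plan is to read the isomorphism off the Gysin/localisation principle recalled immediately before the statement, applied to the stratification of the semistable stack by its strictly semistable locus, with the Semi-Purity Lemma \cite[Lemma 2.2.2]{HS} supplying the vanishing range. First I would make precise the three stacks entering the even-degree filtration displayed just above: $\mathscr{B}un_X^{-1}(2,d)$ is the stack of semistable bundles, $\mathscr{B}un_X^{0}(2,d)=\mathscr{B}un_X^{s}(2,d)$ is its open substack of stable bundles, and by construction
$$\mathscr{B}un_X^{stss}(2,d)=\mathscr{B}un_X^{-1}(2,d)\setminus\mathscr{B}un_X^{0}(2,d)$$
is the \emph{closed} substack of strictly semistable bundles. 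The first thing to verify is that this is genuinely a closed immersion: on $T$-points it is the locus where the first Segre invariant attains its minimal value $s_{1}=0$ (equivalently, where a sub-line-bundle of slope $\mu=d/2$ exists), which is closed by the semicontinuity of $s_{1}$ recalled in Section 1.

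I then set $c:=codim(\mathscr{B}un_X^{stss}(2,d))$ for the codimension of this stratum inside the semistable stack. Its value $c=g-1$ follows from the dimension count in the proof of Theorem \ref{codimprop} specialised to $n=2$, $m=1$ and $\delta=d/2$ (so that $dm-n\delta=0$), in agreement with the rank-two computations of Lemma \ref{lem-coh}; only the resulting codimension, and not its exact value, is actually needed for the stated range. Next I would apply the Gysin principle recalled just above directly to the codimension-$c$ closed substack $\mathscr{B}un_X^{stss}(2,d)$, whose complement in the semistable stack $\mathscr{B}un_X^{-1}(2,d)$ is the stable stack $\mathscr{B}un_X^{s}(2,d)$. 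Since the ambient stack is smooth, being an open substack of the smooth stack $\bnd$ of Theorem \ref{thm-stack}, the Semi-Purity Lemma \cite[Lemma 2.2.2]{HS} underlying that principle guarantees that the relevant comparison map is an isomorphism once the correction terms, which are shifted by $2c$, have vanished; as these fall into negative degrees exactly when $i<2c-1$, the principle yields
$$H^{i}(\mathscr{B}un_X^{stss}(2,d),\Ql)\cong H^{i}(\mathscr{B}un_X^{s}(2,d),\Ql)\qquad\text{for }i<2c-1=2\,codim(\mathscr{B}un_X^{stss}(2,d))-1.$$

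The hard part will be justifying the Gysin/purity formalism for the stacks at hand, since $\mathscr{B}un_X^{-1}(2,d)$ and its substacks are only locally of finite type. One must either run the argument over the exhausting system of finite-type open substacks used in this section to define $l$-adic cohomology and check that the localisation sequences are compatible with the transition maps, or equivalently invoke the version of the Semi-Purity Lemma \cite[Lemma 2.2.2]{HS} already adapted to this setting. A secondary, bookkeeping obstacle is to confirm that the codimension is taken on the stacks rather than on their coarse spaces; since $\mathscr{B}un_X^{s}(2,d)\to M_X^{s}(2,d)$ and its semistable analogue are $\mathbb{G}_m$-gerbes, the codimension of the strictly semistable stratum agrees with the value $g-1$ found on the moduli side, so no further computation is required and the stated range $i<2\,codim(\mathscr{B}un_X^{stss}(2,d))-1$ is exactly the range in which the correction terms are forced to vanish.
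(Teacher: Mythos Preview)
Your approach is the same one the paper intends: apply the Gysin/Semi-Purity principle recalled immediately before the Corollary to the closed immersion of the strictly semistable locus into the semistable stack, whose open complement is the stable stack. Your setup, the verification that the immersion is closed, and the codimension bookkeeping are all in order and match the paper's intent.

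There is, however, a genuine logical gap in the final step. The Gysin/Semi-Purity principle for a closed substack $\mathfrak{Z}\hookrightarrow\mathfrak{X}$ of codimension $c$ yields
\[
H^{i}(\mathfrak{X},\Ql)\;\cong\;H^{i}(\mathfrak{X}\setminus\mathfrak{Z},\Ql)\qquad(i<2c-1),
\]
i.e.\ the cohomology of the \emph{ambient} stack agrees with that of the open complement. With your identifications (ambient $=\mathscr{B}un_X^{-1}(2,d)$, closed $=\mathscr{B}un_X^{stss}(2,d)$, open $=\mathscr{B}un_X^{s}(2,d)$) this gives
\[
H^{i}(\mathscr{B}un_X^{ss}(2,d),\Ql)\;\cong\;H^{i}(\mathscr{B}un_X^{s}(2,d),\Ql),
\]
not the displayed isomorphism with $\mathscr{B}un_X^{stss}$ on the left. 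You have silently replaced the ambient stack by the closed stratum in the conclusion, which the Gysin sequence does not allow. In fact the paper's own displayed ``general fact'' just above the Corollary has the same swap of $\mathfrak{Z}$ and $\mathfrak{X}$ (note that $\mathfrak{Z}\setminus\mathfrak{X}$ would be empty), so the Corollary as printed almost certainly contains a typo and should read $\mathscr{B}un_X^{ss}(2,d)$ on the left. Your argument correctly proves that corrected statement, but as written your final line asserts something that does not follow from the preceding reasoning; you should flag the discrepancy rather than reproduce it.
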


We now describe some cohomological properties of the moduli stack  $\bs_{X}(2,d)$ using the Shatz polygon associated to vector bundles \cite{HL, S} and the Harder-Narasimhan filtration.

Consider a family $E\to X\times T$ of vector bundles of rank $2$
and degree $d$ and denote by $E_{t}=E|_{X\times\{t\}}$ the corresponding restriction.
Then, if $P=\{(0,0),(1,d_{1}),(2,d) \}$ denotes a Shatz polygon (see \cite{S, AD}), we define the following sets
$$
F_P(T)=\{t\in T | P(E_{t})> P\},
$$
$$
\Omega_P(T)=T\setminus F_P(T),
$$
$$
S_P(T)=\{t\in T|P(E_{t})=P\}.
$$

We have the following general description:

\begin{lem}\label{lastlem}
Let $(k,l)$ a pair of integers and $E$ be an unstable vector bundle of rank 2 and degree $d$.
Then the following statements are equivalent:
\begin{enumerate}
\item $E$ is $(k,l)$-stable.
\item If $P(E)=\{(0,0),(1,d_{1}),(2,d) \}$ is the Shatz polygon of $E$ and
    $0\subset L_{0} \subset E$ is the Harder-Narasimhan filtration $HN(E)$ of $E$, then  $2d_{1}=2d(L_{0})< d-k-l$.
\end{enumerate}
\end{lem}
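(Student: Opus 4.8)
The plan is to reduce the statement to the Segre-invariant criterion for $(k,l)$-stability recorded in Remark \ref{kl-vs-st}, specialised to rank $n=2$. First I would observe that for a rank $2$ vector bundle every proper subbundle is a line subbundle, so in the definition of $(k,l)$-stability only the case $m=1$ is relevant; correspondingly, the only Segre invariant that matters is $s_{1}(E)=d-2\deg(F_{\max})$, where $F_{\max}\subset E$ is a line subbundle of maximal degree. Applying Remark \ref{kl-vs-st} with $n=2$, $m=1$, we get that $E$ is $(k,l)$-stable if and only if $s_{1}(E)>k(2-1)+1\cdot l=k+l$.

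Next I would identify $\deg(F_{\max})$ with $d_{1}=d(L_{0})$. Since $E$ is unstable of rank $2$, its Harder-Narasimhan filtration has the form $0\subset L_{0}\subset E$ with $L_{0}$ the unique maximal destabilising subbundle; for rank $2$ this is exactly the line subbundle of maximal degree, so $L_{0}$ is saturated and $\deg(L_{0})=\deg(F_{\max})$. Accordingly, the interior vertex of the Shatz polygon $P(E)=\{(0,0),(1,d_{1}),(2,d)\}$ is $(1,d_{1})$ with $d_{1}=d(L_{0})=\deg(F_{\max})$. Substituting into the inequality above yields that $E$ is $(k,l)$-stable if and only if $d-2d_{1}>k+l$, i.e. if and only if $2d_{1}=2d(L_{0})<d-k-l$, which is precisely the asserted equivalence of (1) and (2).

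Alternatively, and equivalently, one can argue directly from the definition of $(k,l)$-stability: $E$ is $(k,l)$-stable if and only if $\mu_{k}(L)=\deg(L)+k<\mu_{k-l}(E)=(d+k-l)/2$ for every line subbundle $L\subset E$, i.e. if and only if $2\deg(L)<d-k-l$ for all such $L$; since the supremum of $\deg(L)$ over line subbundles of $E$ is attained and equals $d_{1}$, this inequality holds for all $L$ exactly when $2d_{1}<d-k-l$. I would probably present this direct version as the proof and relegate the Segre-invariant phrasing to a remark, or vice versa.

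There is no genuine obstacle in this proof; the only point that needs a little care is the bookkeeping identifying the interior vertex of the Shatz polygon, equivalently the destabilising line bundle $L_{0}$ of the Harder-Narasimhan filtration, with the maximal-degree line subbundle $F_{\max}$ that enters the definition of $s_{1}(E)$. This identification relies precisely on the hypotheses that $E$ has rank $2$ and is unstable, so that the Harder-Narasimhan filtration has a single step whose subbundle is automatically saturated of maximal degree; once this is in place, the equivalence is a one-line inequality manipulation.
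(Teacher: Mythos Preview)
Your proposal is correct and your ``alternative'' direct argument is essentially the paper's own proof: the paper simply rewrites $(k,l)$-stability in rank $2$ as $\mu(E)-\mu(L)>(k+l)/2$ for every line subbundle $L$ and observes this is equivalent to $\mu(E)-\mu(L_{0})>(k+l)/2$ for the maximal-degree line subbundle $L_{0}$. Your Segre-invariant formulation via Remark~\ref{kl-vs-st} is a harmless repackaging of the same inequality.
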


\begin{proof}
  $E$ is $(k,l)$-stable if and only if $\mu(E)-\mu(L)>(k+l)/2$ for any subbundle $L\subseteq E$,
  which is equivalent to having $\mu(E)-\mu(L)\geq \mu(E)-\mu(L_{0})>(k+l)/2$ where  $L_0$ is the maximal subbundle of $E$.
\end{proof}

With $E$ and $(k,l)$ as in Lemma \ref{lastlem}, we see that $k+l\leq 0$.
Now we consider the Shatz polygon
$$
P_{k,l}:=\{(0,0),(1,d_{1}^{k,l}), (2,d) \},
$$
with $d_{1}^{k,l}$ defined as the biggest integer such that $2d_{1}^{k,l}< d-k-l$.

Then we have the following consequence:

\begin{prop}
Let $E\to X\times T$ be a family of vector bundles parametrised by a scheme $T$, then $E_t$ is $(k,l)$-stable
if and only if  $t\in \Omega_{P_{k,l}}(T)$.
\end{prop}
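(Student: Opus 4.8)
The plan is to reduce the statement to Lemma~\ref{lastlem} together with the definition of the sets $\Omega_{P_{k,l}}(T)$, $F_{P_{k,l}}(T)$ and $S_{P_{k,l}}(T)$, distinguishing for each $t\in T$ the two cases according to whether $E_t$ is semistable or unstable. First I would fix $t\in T$ and recall that $\Omega_{P_{k,l}}(T)=T\setminus F_{P_{k,l}}(T)=\{t\in T\mid P(E_t)\leq P_{k,l}\}$, where $\leq$ is the usual partial order on Shatz polygons (the ``lying on or below'' order), so that $t\in \Omega_{P_{k,l}}(T)$ means precisely that the Harder--Narasimhan polygon $P(E_t)$ of $E_t$ does not lie strictly above $P_{k,l}$.

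For the unstable case, I would argue as follows. If $E_t$ is unstable, write $P(E_t)=\{(0,0),(1,d_1),(2,d)\}$ for the Shatz polygon with $d_1=d(L_0)$ the degree of the maximal destabilising line subbundle $L_0\subset E_t$; here $d_1>d/2$, so the polygon genuinely sticks up. By Lemma~\ref{lastlem}, $E_t$ is $(k,l)$-stable if and only if $2d_1<d-k-l$. By the very definition of $d_1^{k,l}$ as the largest integer with $2d_1^{k,l}<d-k-l$, the inequality $2d_1<d-k-l$ is equivalent to $d_1\leq d_1^{k,l}$, which in turn is exactly the condition that the vertex $(1,d_1)$ of $P(E_t)$ lies on or below the vertex $(1,d_1^{k,l})$ of $P_{k,l}$, i.e.\ $P(E_t)\leq P_{k,l}$, i.e.\ $t\in\Omega_{P_{k,l}}(T)$. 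So in the unstable case the two conditions coincide.

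For the semistable case I need the inequality $k+l\leq 0$, which is available: Lemma~\ref{lastlem} forces $k+l\leq 0$ whenever there is to be any unstable $(k,l)$-stable bundle, and more directly the filtration picture and Example~\ref{exa-unssemi} show that in the regime where $(k,l)$-stability can hold for non-stable bundles we have $k+l\leq 0$; one may simply observe that $d_1^{k,l}\geq \lceil d/2\rceil$ precisely when $k+l\le 0$, which is the hypothesis implicitly in force for this section. If $E_t$ is semistable then its Shatz polygon is the straight segment $\{(0,0),(2,d)\}$, which lies on or below $P_{k,l}$ because $2\cdot(d/2)=d\le d-k-l$ when $k+l\le 0$; hence $t\in\Omega_{P_{k,l}}(T)$. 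On the other hand a semistable $E_t$ is automatically $(k,l)$-stable in this range: for any line subbundle $L\subset E_t$ one has $\mu(E_t)-\mu(L)\ge 0 \ge (k+l)/2$, and the needed strict inequality $\mu(E_t)-\mu(L)>(k+l)/2$ follows from $k+l\le 0$ together with the integrality argument used in Lemma~\ref{lastlem} (if $\mu(E_t)=\mu(L)$ then $2d_1=d$ and $2d_1<d-k-l$ still holds strictly when $k+l<0$; the boundary case $k+l=0$ needs the Segre/integrality bound exactly as in Lemma~\ref{lemmaE}). Thus again both conditions hold simultaneously.

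Combining the two cases: for every $t\in T$, $E_t$ is $(k,l)$-stable if and only if $t\in\Omega_{P_{k,l}}(T)$, which is the assertion. The one delicate point I expect to be the main obstacle is the boundary behaviour when $k+l=0$ (equivalently when $d-k-l$ is even, so the defining strict inequality $2d_1^{k,l}<d-k-l$ lands exactly one step below $d/2$): there one must be careful that a strictly semistable bundle, whose maximal subbundle has slope exactly $\mu(E_t)$, still satisfies the \emph{strict} inequality $\mu_k(L)<\mu_{k-l}(E_t)$, and this is where one invokes the integrality argument of Lemma~\ref{lemmaE} rather than a naive slope comparison. Everything else is a direct translation between the numerical condition of Lemma~\ref{lastlem} and the partial order on Shatz polygons defining $\Omega_{P_{k,l}}(T)$.
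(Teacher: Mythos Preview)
Your treatment of the unstable case is exactly the paper's own proof: invoke Lemma~\ref{lastlem} to convert $(k,l)$-stability of an unstable $E_t$ into the numerical condition $2d(L_0)<d-k-l$, and then observe that this is the same as $d(L_0)\le d_1^{k,l}$, i.e.\ $P(E_t)\le P_{k,l}$, i.e.\ $t\in\Omega_{P_{k,l}}(T)$. The paper's proof in fact stops there and leaves the converse (and the semistable case) to the reader.

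Your added discussion of the semistable case, however, has a gap. You argue that the straight-line polygon lies on or below $P_{k,l}$ ``because $2\cdot(d/2)=d\le d-k-l$ when $k+l\le 0$''; but the height of $P_{k,l}$ at $x=1$ is the \emph{integer} $d_1^{k,l}$, not $(d-k-l)/2$, and it is perfectly possible to have $d/2>d_1^{k,l}$ even with $k+l\le 0$ (for instance $d$ odd and $k+l\in\{0,-1\}$, or $d$ even and $k+l=0$, where $d_1^{k,l}=\lfloor d/2\rfloor-1$ or $(d-1)/2$). In those situations the straight segment actually lies \emph{above} $P_{k,l}$, so your inequality does not give $t\in\Omega_{P_{k,l}}(T)$. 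Relatedly, the assertion that ``a semistable $E_t$ is automatically $(k,l)$-stable in this range'' fails at $k+l=0$: a strictly semistable bundle is not $(0,0)$-stable, and Lemma~\ref{lemmaE} (which is about $(-1,1)$-stability and degree divisible by $n$) does not supply the missing strict inequality. So the semistable case cannot be closed by the computation you wrote; it needs either a sharper hypothesis on $(k,l)$ ensuring $d_1^{k,l}\ge d/2$, or a genuinely different argument. The paper's proof simply does not enter into this case.
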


\begin{proof}
If $E_{t}$ is $(k,l)$-stable and the Harder-Narasimhan filtration $HN(E_{t})$ is $0\subset L_{0}\subset E$,
then by Lemma \ref{lastlem} we have:  $2d(L_{0})<d-k-l$, which implies that for the Shatz polygon we have: $P(E)<P_{k,l}$.
The converse follows in a similar way.
\end{proof}

As a nice direct consequence we also get:

\begin{cor}\label{cod-com}
If $E$ is a complete family of vector bundles and $T$ is a smooth scheme,
then $codim(T\setminus T^{k,l})=2 (2d_{1}^{k,l}-d+g-1)$
\end{cor}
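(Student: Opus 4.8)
The statement to prove is Corollary \ref{cod-com}: for a complete family $E\to X\times T$ over a smooth scheme $T$, one has $\operatorname{codim}(T\setminus T^{k,l}) = 2(2d_1^{k,l}-d+g-1)$, where $T^{k,l}$ is the open locus where $E_t$ is $(k,l)$-stable. The plan is to combine the preceding proposition identifying $T^{k,l}$ with the Shatz open stratum $\Omega_{P_{k,l}}(T)$ together with the standard stratification theory of the Shatz polygon for families of vector bundles on a curve (as in \cite{S, AD, HL}). First I would note that by the Proposition just above, $T\setminus T^{k,l} = F_{P_{k,l}}(T)$ is precisely the closed subset of $t\in T$ with $P(E_t)> P_{k,l}$; since $E$ is a complete family and $T$ is smooth, this locus is nonempty of pure codimension and its codimension is computed by the universal Shatz-stratification formula.

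Second, I would recall the codimension formula for Shatz strata. For a family of rank $2$, degree $d$ bundles, the stratum where the Harder--Narasimhan type is $\{(0,0),(1,d_1),(2,d)\}$ with $d_1 > d/2$ has codimension in the base (for a complete family over a smooth base) equal to $h^1(\operatorname{Hom}(Q,L_0)) = -\chi(L_0\otimes Q^{*})$, where $0\subset L_0\subset E_t\to Q\to 0$ is the HN filtration; here $\deg L_0 = d_1$, $\deg Q = d-d_1$, so $\deg(L_0\otimes Q^{*}) = d_1-(d-d_1) = 2d_1-d$ and, by Riemann--Roch, $\chi(L_0\otimes Q^{*}) = 2d_1-d+1-g$. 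Hence $h^1 = -(2d_1-d+1-g) = -2d_1+d+g-1$ once this is negative, i.e. the codimension of that single stratum is $d+g-1-2d_1$. The boundary set $F_{P_{k,l}}(T)$ is the union of all strata with $P(E_t) > P_{k,l}$, and among those the one of least codimension is the one with $d_1$ as small as possible subject to $d_1 > d_1^{k,l}$, namely $d_1 = d_1^{k,l}+1$. Plugging in gives codimension $d+g-1-2(d_1^{k,l}+1) = d+g-3-2d_1^{k,l}$; I would then observe that this equals $-2(2d_1^{k,l}-d+g-1)/1$... and here is where I must be careful with the arithmetic and signs to match the asserted expression $2(2d_1^{k,l}-d+g-1)$, reconciling it with the factor of $2$ coming from working with $\mathbb{G}_m$-gerbes / the stack versus its coarse space, or with the convention that $d_1^{k,l}$ is the maximal integer with $2d_1^{k,l}<d-k-l$ so that $2d_1^{k,l}\approx d-k-l-1$ or $d-k-l-2$.

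Third, I would assemble the argument: identify $T\setminus T^{k,l}$ with $F_{P_{k,l}}(T)$ via the previous Proposition; invoke that for a complete family the Shatz stratification is the pullback of the universal one on $\bnd$ (so each stratum has the expected codimension, none degenerating), cf. \cite{AD, S}; compute the minimal codimension among the contributing strata by the Riemann--Roch computation above; and conclude. The main obstacle I anticipate is purely bookkeeping: making the off-by-one in $d_1^{k,l}$ and the factor $2$ come out exactly to $2(2d_1^{k,l}-d+g-1)$ rather than to $d+g-1-2d_1^{k,l}$ or half of that. If the intended reading is that $\operatorname{codim}$ here refers to the codimension inside $T$ but the authors have folded in the relation to Lemma \ref{lem-coh} (where the rank-two codimension in the stable locus was $g-k-l-1$) together with the substitution $2d_1^{k,l} = d-k-l-1$, then $2d_1^{k,l}-d+g-1 = g-k-l-2$ and the factor $2$ would need separate justification; I would resolve this by checking the formula against the $(0,0)$ (i.e. stable) case where the answer must reduce to the known codimension of the unstable locus, and adjust the derivation of the universal Shatz codimension accordingly. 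Once the normalisation is pinned down, the proof is a one-line citation of the Shatz-stratification codimension formula plus Riemann--Roch, exactly as sketched.
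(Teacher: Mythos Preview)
Your approach is essentially the paper's: the authors' entire proof is the observation that $T\setminus T^{k,l}$ is a Shatz locus and then a citation of the codimension formula for Shatz strata in Le Potier's book \cite[Corollary~15.4.3]{LP}, which is exactly the Atiyah--Bott/Riemann--Roch computation you carry out by hand. One discrepancy worth noting: the paper writes $T\setminus T^{k,l}=S_{P_{k,l}}(T)$, whereas the preceding Proposition literally gives $T^{k,l}=\Omega_{P_{k,l}}(T)$ and hence $T\setminus T^{k,l}=F_{P_{k,l}}(T)$, as you say; the paper is evidently identifying the complement with (the closure of) its generic stratum before invoking Le Potier, and your more careful bookkeeping with $d_1=d_1^{k,l}+1$ is the honest version of that step. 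Your unease about the factor $2$ and the off-by-one is warranted---the paper does not spell this out and simply absorbs it into the citation---so your plan to pin down the normalisation by specialising to the known case is exactly the right sanity check.
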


\begin{proof}
As a first step we observe that $T\setminus T^{k,l}=S_{P_{k,l}}(T)$.
Hence we can apply \cite[ Corollary 15.4.3]{LP} and the result follows.
\end{proof}

From the above considerations we get a kind of approximation of the cohomology of the moduli stack of all rank two vector bundles over the algebraic curve by the cohomologies of the different moduli stacks of $(k, l)$-stable bundles of rank two, namely we have:

\begin{thm}
 $ \lim\limits_{\leftarrow}H^{i}(\mathscr{B}un_X^{k,l}(2,d), \Ql)=H^{i}(\mathscr{B}un_X(2,d), \Ql).$
\end{thm}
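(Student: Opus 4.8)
The plan is to show that the moduli stack $\mathscr{B}un_X(2,d)$ is, for cohomological purposes, the ``limit'' of its open substacks $\mathscr{B}un_X^{k,l}(2,d)$ as the pair $(k,l)$ runs to $-\infty$ along one of the two filtrations described earlier, say letting $k=l\to -\infty$. By the corollary immediately above (Corollary \ref{cod-com}) the codimension of the closed complement $T\setminus T^{k,l}$ inside any smooth parameter scheme $T$ is $2(2d_1^{k,l}-d+g-1)$, and since $d_1^{k,l}$ is defined as the largest integer with $2d_1^{k,l}<d-k-l$, this codimension grows linearly and tends to $+\infty$ as $k+l\to -\infty$. Equivalently, on the level of the atlas $R^{k,l}_m$ constructed in Section 4, the open substacks $\mathscr{B}un_X^{k,l}(2,d)$ exhaust $\mathscr{B}un_X(2,d)$ and the codimensions of the closed complements $\mathscr{B}un_X(2,d)\setminus\mathscr{B}un_X^{k,l}(2,d)$ are unbounded.

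First I would recall the definition of $l$-adic cohomology of an algebraic stack that is locally of finite type, as recorded at the start of this section: $H^*(\mathscr{B}un_X(2,d),\Ql)$ is by definition $\varprojlim H^*(\mathscr{U},\Ql)$ over all open substacks $\mathscr{U}\subset\mathscr{B}un_X(2,d)$ of finite type. Since each $\mathscr{B}un_X^{k,l}(2,d)$ is an open substack (Theorem \ref{thm-stack}, and the filtration displayed after it), and since every finite-type open substack $\mathscr{U}$ of $\mathscr{B}un_X(2,d)$ is contained in some $\mathscr{B}un_X^{k,l}(2,d)$ (because a finite-type open substack only involves bundles whose maximal destabilising line subbundle has bounded degree, and $(k,l)$-stability for $k+l$ sufficiently negative is implied by any such bound, cf. Example \ref{exa-unssemi} and Lemma \ref{lastlem}), the substacks $\{\mathscr{B}un_X^{k,l}(2,d)\}$ are cofinal among the finite-type open substacks. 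Hence the defining limit may be computed along this cofinal subsystem:
$$
H^i(\mathscr{B}un_X(2,d),\Ql)=\varprojlim_{k+l\to -\infty} H^i(\mathscr{B}un_X^{k,l}(2,d),\Ql).
$$
The indexing here is by the poset of pairs $(k,l)$ ordered by the filtration, with transition maps the restriction maps for the open inclusions $\mathscr{B}un_X^{k',l'}(2,d)\hookrightarrow\mathscr{B}un_X^{k,l}(2,d)$, and I would note that along this system the maps are eventually isomorphisms in each fixed degree $i$: by the Semi-Purity / Gysin argument (Corollary \ref{cor-coh} and the Gysin sequence quoted afterwards), once $2\,codim(\mathscr{B}un_X(2,d)\setminus\mathscr{B}un_X^{k,l}(2,d))-1>i$ the restriction $H^i(\mathscr{B}un_X(2,d),\Ql)\to H^i(\mathscr{B}un_X^{k,l}(2,d),\Ql)$ is an isomorphism, and by Corollary \ref{cod-com} that codimension bound is met for all $k+l$ sufficiently negative.

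Putting these together gives the statement: in each fixed cohomological degree $i$ the system $H^i(\mathscr{B}un_X^{k,l}(2,d),\Ql)$ is eventually constant equal to $H^i(\mathscr{B}un_X(2,d),\Ql)$, so its inverse limit is $H^i(\mathscr{B}un_X(2,d),\Ql)$, which is exactly the assertion $\varprojlim H^i(\mathscr{B}un_X^{k,l}(2,d),\Ql)=H^i(\mathscr{B}un_X(2,d),\Ql)$. \textbf{The main obstacle} I anticipate is the cofinality claim — making precise that every finite-type open substack of $\mathscr{B}un_X(2,d)$ is swallowed by some $\mathscr{B}un_X^{k,l}(2,d)$, and that the transition maps in the two descriptions of the limit agree. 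This needs the boundedness of families of rank-$2$ bundles of bounded Harder--Narasimhan type together with Lemma \ref{lastlem}, which translates ``bounded destabilising degree'' into ``$(k,l)$-stable for $k+l\ll 0$''; one should also check that the inverse system is Mittag--Leffler (it is, since it is eventually constant in each degree), so that no $\varprojlim^1$ term intervenes. The purity input (Gysin/Semi-Purity) itself is already quoted from \cite{HS}, so the remaining work is genuinely just the cofinality and the identification of transition maps.
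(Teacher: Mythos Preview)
Your proposal is correct and follows essentially the same route as the paper: the codimension of the complement $\mathscr{B}un_X(2,d)\setminus\mathscr{B}un_X^{k,l}(2,d)$ tends to infinity as $k+l\to-\infty$ (via Corollary~\ref{cod-com}), and a Gysin/semi-purity argument then forces the restriction maps to be isomorphisms in any fixed degree for $k+l\ll 0$, so the inverse system is eventually constant. The paper's version is terser and phrases the Gysin step on the level of a complete family $E\to X\times T$ (i.e.\ on an atlas) rather than directly on the stack; your additional cofinality discussion---showing that the $\mathscr{B}un_X^{k,l}(2,d)$ are cofinal among finite-type open substacks via the boundedness of Harder--Narasimhan types and Lemma~\ref{lastlem}---makes explicit a point the paper leaves implicit, and is the right way to connect the statement to the limit definition of $H^*(\mathscr{B}un_X(2,d),\Ql)$ recalled at the start of the section. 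One small caveat: your reference to Corollary~\ref{cor-coh} is slightly off, since that corollary compares $(k,l)$-stable to \emph{stable} bundles under the hypothesis $0\le k+l<g-1$; the input you actually need (and correctly cite elsewhere) is Corollary~\ref{cod-com} together with the Gysin sequence quoted from \cite{HS}.
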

\begin{proof}
We will prove that $\mathscr{B}un_X^{k,l}(n, d)\to \mathscr{B}un_X(n, d)$ is an isomorphism in cohomology of degree $i$ if
$i< -2(k+l-3)$ and $k+l\equiv d\mod 2$ or $i< -2(k+l-2)$ and $k+l\not\equiv d\mod 2$.
However if $E\to X\times T$ is a complete family, then by Corollary \ref{cod-com}
the inclusion $T^{k,l}\to T$ is an isomorphism in cohomology of degree $i$ as above.
Then the results follows again by a Gysin sequence argument.
\end{proof}

With the results described above we can now also define a general Hecke correspondence for the moduli stacks $\mathscr{B}un_X^{k,l}(n,d)$
of $(k, l)$-stable vector bundles. Hecke correspondence have been defined and used in many contexts (see, \cite{Bra-Mat,TG,YH,LOZ}).
In particular, Hoffmann in \cite{NH} described a Hecke correspondence for the moduli stack of all vector bundles over an algebraic curve
using the evaluation map transformation and he constructed a vector bundle over any given open substack.

Consider the universal family $E^{univ}$ of vector bundles over $X\times \mathscr{B}un_X(n,d)$ and denote by
 $E^{univ}_{k,l}$ the restriction of the universal bundle to the substack of $(k, l)$-stable vector bundles $\mathscr{B}un_X^{k,l}(n,d)$.
 Observe that $E^{univ}_{k,l}$ has weight $1$.
 Hence if $1\leq r\leq n-1$, we can associate to $E^{univ}_{k, l}$ the Grassmannian bundle $Gr_{r}(E^{univ}_{k, l})$, which also has
 weight $1$. Using Proposition 3.9 of \cite{NH} we therefore get the following:
 \begin{prop}\label{bio-gras}
For any two pairs of integers $(k_{1}, l_{1})$ and $(k_{2}, l_{2})$ satisfying conditions $(\ref{ec1})$ and $(\ref{ec2})$, there exists a birational linear map
 $$
 \xymatrix{Gr_{j}(E_{k_{1},l_{1}}^{univ})\ar@{-->}[r]^{\rho}&Gr_{j}(E_{k_{2},l_{2}}^{univ})\\
 }
 $$
 over the moduli stack $\mathscr{B}un_X(n,d).$ If in addition $j$ is divisible by $\gcd(n,d),$ then the Grassmannian bundle
$\rho:Gr_{j}(E_{k,l}^{univ})\to \mathscr{B}un_X(n, d)$ is birational linear.
 \end{prop}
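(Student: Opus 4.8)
The plan is to deduce Proposition~\ref{bio-gras} from Hoffmann's description of the Hecke correspondence for the full moduli stack $\mathscr{B}un_X(n,d)$ (namely \cite[Proposition 3.9]{NH}) by restricting everything along the open embeddings established in Theorem~\ref{thm-stack}. First I would recall that, by Theorem~\ref{thm-stack}, for any pair $(k,l)$ satisfying $(\ref{ec1})$ and $(\ref{ec2})$ the forgetful morphism $\theta^{k,l}\colon \mathscr{B}un_X^{k,l}(n,d)\to \mathscr{B}un_X(n,d)$ is a representable open embedding, and that $E^{univ}_{k,l}=(\theta^{k,l})^{*}E^{univ}$ is simply the restriction of the universal bundle, hence still of weight $1$. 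Consequently $Gr_j(E^{univ}_{k,l})\cong (\theta^{k,l})^{*}Gr_j(E^{univ})$ is the restriction of the Grassmannian bundle $Gr_j(E^{univ})\to \mathscr{B}un_X(n,d)$ to the open substack $\mathscr{B}un_X^{k,l}(n,d)$, and it carries weight $1$ as well since forming Grassmannian bundles does not change the weight.

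Next I would observe that for two such pairs $(k_1,l_1)$ and $(k_2,l_2)$, the open substacks $\mathscr{B}un_X^{k_1,l_1}(n,d)$ and $\mathscr{B}un_X^{k_2,l_2}(n,d)$ both sit inside $\mathscr{B}un_X(n,d)$ as \emph{non-empty} open substacks of the irreducible stack $\mathscr{B}un_X(n,d)$ — non-emptiness following from Theorem~\ref{prop-nonempti}(1), whose hypotheses are exactly $(\ref{cond1})$ and $(\ref{cond2})$ and are implied by $(\ref{ec1})$ and $(\ref{ec2})$. Since $\mathscr{B}un_X(n,d)$ is irreducible (as recalled after Theorem~\ref{thm-stack}), the intersection $\mathscr{B}un_X^{k_1,l_1}(n,d)\cap \mathscr{B}un_X^{k_2,l_2}(n,d)$ is again a non-empty open, hence dense, substack; over this common open locus both $Gr_j(E^{univ}_{k_1,l_1})$ and $Gr_j(E^{univ}_{k_2,l_2})$ agree, being restrictions of the single bundle $Gr_j(E^{univ})$. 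This common dense open restriction provides the required birational linear map $\rho\colon Gr_j(E^{univ}_{k_1,l_1})\dashrightarrow Gr_j(E^{univ}_{k_2,l_2})$ over $\mathscr{B}un_X(n,d)$: it is a linear isomorphism of Grassmannian bundles over a dense open substack of the common base, hence birational linear in Hoffmann's sense.

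For the final clause, when $j$ is divisible by $\gcd(n,d)$, I would invoke \cite[Proposition 3.9]{NH} directly: Hoffmann shows in that case that the Grassmannian bundle $Gr_j(E^{univ})\to \mathscr{B}un_X(n,d)$ is itself birational linear (this divisibility is precisely the condition under which the relevant Brauer-type obstruction vanishes, so that the projective bundle structure can be refined to a linear one birationally). Restricting this statement along the open embedding $\theta^{k,l}$ then yields that $Gr_j(E^{univ}_{k,l})\to \mathscr{B}un_X(n,d)$ is birational linear, as claimed. The one point that needs care — and which I expect to be the main obstacle — is making precise the notion of ``birational linear map of Grassmannian bundles over a stack'' and checking that restriction to a dense open substack preserves it; this is essentially bookkeeping with Hoffmann's definitions, using that a weight-$1$ bundle on $\mathscr{B}un_X(n,d)$ restricts to a weight-$1$ bundle on any open substack and that birationality is detected on a dense open, but it should be spelled out so that the reduction to \cite[Proposition 3.9]{NH} is clean.
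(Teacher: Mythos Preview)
Your proposal is correct and follows essentially the same route as the paper: the paper's argument consists of the observation preceding the proposition that $E^{univ}_{k,l}$ (and hence $Gr_j(E^{univ}_{k,l})$) has weight $1$, together with the sentence ``Using Proposition 3.9 of \cite{NH} we therefore get the following'', which is exactly the reduction you spell out. Your write-up simply makes explicit the bookkeeping (open embeddings from Theorem~\ref{thm-stack}, non-emptiness from Theorem~\ref{prop-nonempti}, irreducibility of $\mathscr{B}un_X(n,d)$) that the paper leaves implicit.
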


Thus by Proposition \ref{bio-gras} and Proposition $A.6$ in \cite{NH}, we therefore obtain the following:
\begin{prop}\label{bio-mod}
Let $m$ be an integer with $1\leq m\leq n-1$, $(k_{2}, l_{2})$ be a pair of integers, such that $(k_{2},l_{2}-m)$
satisfies the conditions $(\ref{ec1})$ and $(\ref{ec2})$ and $(k_{1},l_{1})$ be a pair of integers, such that $(k_{1},l_{1})$-stability implies $(k_{2},l_{2})$-stability. Then we have the following diagram of moduli spaces:
$$
\xymatrix{ M_X^{k_{1},l_{1}}(n,d)\ar@{-->}[r]^{\Psi_{1} \ \ \ \ \ \ }\ar@{^(->}[d]& M_X^{k_{1},l_{1}-m}(n,d-m)\ar@{^(->}[d]\\
M_X^{k_{2},l_{2}}(n,d)\ar@{-->}[r]_{\Psi_{2} \ \ \ \ \ \ }& M_X^{k_{2},l_{2}-m}(n,d-m)
}
$$
where $\Psi_{i}$ is a birational linear map of schemes for each $i=1,2$.
\end{prop}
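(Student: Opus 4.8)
The plan is to realise every arrow in the square as a shadow of one and the same Hecke modification on $\mathscr{B}un_X(n,d)$, and then to push the whole picture down to coarse moduli schemes by means of the $\mathbb{G}_m$-gerbes of Corollary \ref{diagerb} together with Proposition $A.6$ in \cite{NH}. Throughout we use that the hypotheses on $(k_2,l_2-m)$, together with the relation $(k_1,l_1)$-stability $\Rightarrow (k_2,l_2)$-stability, force all four coarse moduli spaces in the diagram to exist as open subschemes of moduli spaces of stable bundles, so that Corollary \ref{diagerb} applies to each of them.

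First I would construct the two horizontal maps uniformly. Fix a pair $(a,b)$ such that $(a,b-m)$ satisfies $(\ref{ec1})$ and $(\ref{ec2})$. Over $X\times\mathscr{B}un_X^{a,b}(n,d)$ the restricted universal bundle $E^{univ}_{a,b}$ has weight one, and a point of the Grassmannian bundle $Gr_m(E^{univ}_{a,b})$ amounts to a triple $(E,x,E_x\twoheadrightarrow Q)$ with $E$ an $(a,b)$-stable bundle of rank $n$ and degree $d$, $x\in X$, and $Q$ an $m$-dimensional quotient of the fibre $E_x$. Such a triple yields an exact sequence $0\to E'\to E\to Q_x\to 0$ with $E'$ locally free of rank $n$ and degree $d-m$, and by the third of the Narasimhan--Ramanan properties recalled above (stability of kernels of torsion quotients) the bundle $E'$ is $(a,b-m)$-stable; for a general such triple $E'$ is moreover stable, hence defines a point of $M_X^{a,b-m}(n,d-m)$. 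This produces a classifying morphism $Gr_m(E^{univ}_{a,b})\dashrightarrow\mathscr{B}un_X^{a,b-m}(n,d-m)$ alongside the structure projection $Gr_m(E^{univ}_{a,b})\to\mathscr{B}un_X^{a,b}(n,d)$, both birational linear by Proposition \ref{bio-gras}. Composing the birational inverse of the projection with the classifying morphism, and then descending the Grassmannian construction along the gerbes of Corollary \ref{diagerb} (this is where Proposition $A.6$ of \cite{NH} is used), gives a birational linear map of schemes; taking $(a,b)=(k_1,l_1)$ and $(a,b)=(k_2,l_2)$ produces $\Psi_1$ and $\Psi_2$.

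For the vertical arrows, note first that the relation ``$(k_1,l_1)$-stability $\Rightarrow (k_2,l_2)$-stability'' depends, via Remark \ref{kl-vs-st}, only on the differences $k_1-k_2$ and $l_1-l_2$ (it is the system $(k_1-k_2)(n-j)+j(l_1-l_2)\ge 0$ for $1\le j\le n-1$). Since $(k_1,l_1)$-stability implies $(k_2,l_2)$-stability, openness of $(k_2,l_2)$-stability gives an open immersion $\mathscr{B}un_X^{k_1,l_1}(n,d)\hookrightarrow\mathscr{B}un_X^{k_2,l_2}(n,d)$, hence an open immersion $M_X^{k_1,l_1}(n,d)\hookrightarrow M_X^{k_2,l_2}(n,d)$ of coarse spaces; replacing $l_1,l_2$ by $l_1-m,l_2-m$ leaves the differences unchanged, so $(k_1,l_1-m)$-stability implies $(k_2,l_2-m)$-stability and we get the right-hand open immersion $M_X^{k_1,l_1-m}(n,d-m)\hookrightarrow M_X^{k_2,l_2-m}(n,d-m)$. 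Commutativity of the square is then automatic, because $\Psi_1$ and $\Psi_2$ are the restrictions of the single Hecke modification $E\mapsto E'$ to the respective open loci: on the dense open subset of $M_X^{k_1,l_1}(n,d)$ where all composites are defined, both $\Psi_2$ followed by the left inclusion and the right inclusion followed by $\Psi_1$ send the class of a general $(k_1,l_1)$-stable $E$ to the class of its general Hecke modification $E'$.

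The hard part will be the descent from the stacky Hecke correspondence to an honest birational map of coarse schemes: one must make sure that the weight-one bundle $E^{univ}_{a,b}$, and with it the Grassmannian bundle $Gr_m(E^{univ}_{a,b})$, descends along the $\mathbb{G}_m$-gerbes $\mathscr{B}un_X^{a,b}(n,\cdot)\to M_X^{a,b}(n,\cdot)$, which is exactly where the hypothesis that $(k_2,l_2-m)$ satisfies $(\ref{ec1})$ and $(\ref{ec2})$ -- placing $M_X^{k_2,l_2-m}(n,d-m)$ inside a moduli space of stable bundles -- and the divisibility bookkeeping underlying Proposition \ref{bio-gras} and Proposition $A.6$ of \cite{NH} are genuinely needed. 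A secondary technical point is to keep careful track of the open loci of definition so that the square commutes as a bona fide diagram of rational maps, and not merely up to indeterminacy in codimension one. Granting these, the linearity of each $\Psi_i$ is inherited directly from the linearity of the Grassmannian bundles supplied by Proposition \ref{bio-gras}.
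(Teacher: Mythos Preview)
Your approach is exactly the paper's: the paper's entire proof of this proposition is the single sentence ``Thus by Proposition \ref{bio-gras} and Proposition $A.6$ in \cite{NH}, we therefore obtain the following,'' and you have supplied the Hecke-modification picture, the gerbe descent via Corollary \ref{diagerb}, and the bookkeeping for the vertical inclusions that this sentence tacitly invokes.

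Two small points worth tightening. First, when you invoke ``the third of the Narasimhan--Ramanan properties'' for the kernel $E'$ of $E\twoheadrightarrow Q_x$, note that property (3) as stated in the paper is for quotients isomorphic to $\mathcal{O}_X^r$, not torsion quotients; the torsion case you need is in fact easier (any proper subbundle $F'\subset E'$ saturates to a proper subbundle of $E$ of the same rank and at least the same degree, so $\mu_k(F')\le\mu_k(\bar{F'})<\mu_{k-l}(E)=\mu_{k-(l-m)}(E')$), but you should say so rather than cite (3). Second, your phrase ``composing the birational inverse of the projection with the classifying morphism'' is not quite right: the Grassmannian projection $Gr_m(E^{univ}_{a,b})\to\mathscr{B}un_X^{a,b}(n,d)$ has positive-dimensional fibres and is not birational; ``birationally linear'' in Hoffmann's sense (and in Proposition \ref{bio-gras}) means birational to a Zariski-locally-trivial projective bundle, not birational to the base. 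The rational map $\Psi_i$ is produced by the machinery of \cite{NH} from the two-legged correspondence $Gr_m(E^{univ})$, not by literally inverting one leg; you should phrase it that way. With these adjustments your write-up is a faithful (and more detailed) version of the paper's argument.
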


Proposition \ref{bio-mod} therefore determines the following diagram of algebraic stacks:
{\small
$$
\xymatrix{ &&Gr_{m}(E^{univ}_{k,l})\ar@/_/[dl]\ar@/^/[rd]&\\
& \mathscr{B}un_X^{k_{1},l_{1}}(n, d)\ar@{-->}[rr]\ar@{->}'[d][dd] && \mathscr{B}un_X^{k_{1},l_{1}-m}(n,d-m)\ar@{->}[dd]\\
M_X^{k_{1},l_{1}}(n,d)\ar@{<-}[ur]\ar@{-->}[rr]^{\Psi_{1}}\ar@{->}[dd] && M_X^{k_{1},l_{1}-m}(n,d-m)\ar@{<-}[ur]\ar@{->}[dd]\\
& \mathscr{B}un_X^{k_{2},l_{2}}(n, d)\ar@{-->}'[r][rr] & & \mathscr{B}un_X^{k_{2},l_{2}-m}(n,d-m)\\
M_X^{k_{2},l_{2}}(n,d)\ar@{-->}[rr]_{\Psi_{2}}\ar@{<-}[ur]&& M_X^{k_{2},l_{2}-m}(n,d-m)\ar@{<-}[ur]
}
$$
}

This diagram shows that the Hecke correspondence as constructed above determines a birational linear map between the moduli stacks of $(k,l)$-stable vector bundles as indicated by the uppermost dashed arrow.\\

{\it Acknowledgements:} Both authors would like to warmly thank the Centro de Investigaci\'on en Matem\'aticas (CIMAT) in Guanajuato for the wonderful hospitality and support. We like to thank especially Professor Leticia Brambila-Paz for many valuable comments and suggestions. The first author wants to thank CONACYT and the Universidad de Guadalajara for partial support and he would like to thank Professor Alexander Nesterov for his help and advice. The second author likes to acknowledge additional support from the University of Leicester via a Santander Travel Grant. Finally, both authors are grateful to the referee for many suggestions to improve this article.


\end{document}